   \def\MR#1{}
\begin{document}

\newtheorem{theorem}{Theorem}
\newtheorem{lemma}[theorem]{Lemma}
\newtheorem{claim}[theorem]{Claim}
\newtheorem{cor}[theorem]{Corollary}
\newtheorem{proposition}[theorem]{Proposition}
\newtheorem{definition}{Definition}
\newtheorem{question}[theorem]{Question}
\newtheorem{remark}[theorem]{Remark}
\newcommand{\hh}{{{\mathrm h}}}

\numberwithin{equation}{section}
\numberwithin{theorem}{section}
\numberwithin{table}{section}

\def\sssum{\mathop{\sum\!\sum\!\sum}}
\def\ssum{\mathop{\sum\ldots \sum}}
\def\dsum{\mathop{\sum \sum}}
\def\iint{\mathop{\int\ldots \int}}

\def\squareforqed{\hbox{\rlap{$\sqcap$}$\sqcup$}}
\def\qed{\ifmmode\squareforqed\else{\unskip\nobreak\hfil
\penalty50\hskip1em\null\nobreak\hfil\squareforqed
\parfillskip=0pt\finalhyphendemerits=0\endgraf}\fi}

\newfont{\teneufm}{eufm10}
\newfont{\seveneufm}{eufm7}
\newfont{\fiveeufm}{eufm5}
%
%
\newfam\eufmfam
     \textfont\eufmfam=\teneufm
\scriptfont\eufmfam=\seveneufm
     \scriptscriptfont\eufmfam=\fiveeufm
%
%
\def\frak#1{{\fam\eufmfam\relax#1}}

\newcommand{\bflambda}{{\boldsymbol{\lambda}}}
\newcommand{\bfmu}{{\boldsymbol{\mu}}}
\newcommand{\bfxi}{{\boldsymbol{\xi}}}
\newcommand{\bfrho}{{\boldsymbol{\rho}}}

\def\fK{\mathfrak K}
\def\fT{\mathfrak{T}}

\def\fA{{\mathfrak A}}
\def\fB{{\mathfrak B}}
\def\fC{{\mathfrak C}}

\def\E{\mathsf {E}}

\def \balpha{\bm{\alpha}}
\def \bbeta{\bm{\beta}}
\def \bgamma{\bm{\gamma}}
\def \blambda{\bm{\lambda}}
\def \bchi{\bm{\chi}}
\def \bphi{\bm{\varphi}}
\def \bpsi{\bm{\psi}}

\def\eqref#1{(\ref{#1})}

\def\vec#1{\mathbf{#1}}


\def\cA{{\mathcal A}}
\def\cB{{\mathcal B}}
\def\cC{{\mathcal C}}
\def\cD{{\mathcal D}}
\def\cE{{\mathcal E}}
\def\cF{{\mathcal F}}
\def\cG{{\mathcal G}}
\def\cH{{\mathcal H}}
\def\cI{{\mathcal I}}
\def\cJ{{\mathcal J}}
\def\cK{{\mathcal K}}
\def\cL{{\mathcal L}}
\def\cM{{\mathcal M}}
\def\cN{{\mathcal N}}
\def\cO{{\mathcal O}}
\def\cP{{\mathcal P}}
\def\cQ{{\mathcal Q}}
\def\cR{{\mathcal R}}
\def\cS{{\mathcal S}}
\def\cT{{\mathcal T}}
\def\cU{{\mathcal U}}
\def\cV{{\mathcal V}}
\def\cW{{\mathcal W}}
\def\cX{{\mathcal X}}
\def\cY{{\mathcal Y}}
\def\cZ{{\mathcal Z}}
\newcommand{\rmod}[1]{\: \mbox{mod} \: #1}

\def\cg{{\mathcal g}}

\def\e{{\mathbf{\,e}}}
\def\ep{{\mathbf{\,e}}_p}
\def\eq{{\mathbf{\,e}}_q}

\def\Tr{{\mathrm{Tr}}}
\def\Nm{{\mathrm{Nm}}}

\def\rE{{\mathrm{E}}}
\def\rT{{\mathrm{T}}}

 \def\SS{{\mathbf{S}}}

\def\lcm{{\mathrm{lcm}}}

\def\t{\tilde}
\def\ov{\overline}

\def\({\left(}
\def\){\right)}
\def\l|{\left|}
\def\r|{\right|}
\def\fl#1{\left\lfloor#1\right\rfloor}
\def\rf#1{\left\lceil#1\right\rceil}
\def\flq#1{\langle #1 \rangle_q}

\def\mand{\qquad \mbox{and} \qquad}

\newcommand{\commIg}[1]{\marginpar{%
\begin{color}{magenta}
\vskip-\baselineskip 
\raggedright\footnotesize
\itshape\hrule \smallskip Ig: #1\par\smallskip\hrule\end{color}}}

\newcommand{\commSi}[1]{\marginpar{%
\begin{color}{blue}
\vskip-\baselineskip 
\raggedright\footnotesize
\itshape\hrule \smallskip Si: #1\par\smallskip\hrule\end{color}}}
\newcommand{\commB}[1]{\marginpar{%
\begin{color}{red}
\vskip-\baselineskip 
\raggedright\footnotesize
\itshape\hrule \smallskip Br: #1\par\smallskip\hrule\end{color}}}




\hyphenation{re-pub-lished}

\mathsurround=1pt

\def\bfdefault{b}
\overfullrule=5pt

\def \F{{\mathbb F}}
\def \K{{\mathbb K}}
\def \Z{{\mathbb Z}}
\def \Q{{\mathbb Q}}
\def \R{{\mathbb R}}
\def \C{{\\mathbb C}}
\def\Fp{\F_p}
\def \fp{\Fp^*}

\def\Smn{S_{k,\ell,q}(m,n)}

\def\Kmn{\cK_p(m,n)}
\def\psmn{\psi_p(m,n)}

\def\SM{\cS_{k,\ell,q}(\cM)}
\def\SMN{\cS_{k,\ell,q}(\cM,\cN)}
\def\SAMN{\cS_{k,\ell,q}(\cA;\cM,\cN)}
\def\SABMN{\cS_{k,\ell,q}(\cA,\cB;\cM,\cN)}

\def\SIJq{\cS_{k,\ell,q}(\cI,\cJ)}
\def\SAJq{\cS_{k,\ell,q}(\cA;\cJ)}
\def\SABJq{\cS_{k,\ell,q}(\cA, \cB;\cJ)}

\def\sM{\cS_{k,q}^*(\cM)}
\def\sMN{\cS_{k,q}^*(\cM,\cN)}
\def\sAMN{\cS_{k,q}^*(\cA;\cM,\cN)}
\def\sABMN{\cS_{k,q}^*(\cA,\cB;\cM,\cN)}

\def\sIJq{\cS_{k,q}^*(\cI,\cJ)}
\def\sAJq{\cS_{k,q}^*(\cA;\cJ)}
\def\sABJq{\cS_{k,q}^*(\cA, \cB;\cJ)}
\def\sABJp{\cS_{k,p}^*(\cA, \cB;\cJ)}

 \def \xbar{\overline x}


 \author[B.  Kerr] {Bryce Kerr}
\address{Department of Pure Mathematics, University of New South Wales,
Sydney, NSW 2052, Australia}
\email{b.kerr@adfa.edu.au}

 \author[S.  Macourt] {Simon Macourt}
\address{Department of Pure Mathematics, University of New South Wales,
Sydney, NSW 2052, Australia}
\email{s.macourt@unsw.edu.au}

%
\keywords{exponential sum, sparse polynomial}
\subjclass[2010]{11L07, 11T23}

\title[Exponential sums with general weights]{Multilinear exponential sums with a general class of weights}

\begin{abstract}
In this paper we obtain some new estimates for multilinear exponential sums in prime fields with a more general class of weights than previously considered. Our techniques are based on some recent progress of Shkredov in Additive Combinatorics with roots in Rudnev's point plane incidence bound. We apply our estimates to obtain new results concerning exponential sums with sparse polynomials and Weyl sums over small generalized arithmetic progressions.
\end{abstract}

\maketitle

\section{Introduction}

Given a prime number $p$, subsets $\cX_1,\dots,\cX_n \subseteq \F^{*}_p$ and sequences of complex numbers $\omega_1(\textbf{x}),\dots,\omega_n(\textbf{x})$,  we define the weighted multilinear exponential sum over $n$ variables by
\begin{align} \label{eq:S(X1XN)}
S(\cX_1, \dots, \cX_n; \omega_1, \dots, \omega_n)= \sum_{x_1\in\cX_1}\dots \sum_{x_n\in\cX_n} \omega_1(\textbf{x}) \dots \omega_n(\textbf{x}) \ep(x_1\dots x_n),
\end{align}
where $\omega_i$ are $n-1$ dimensional weights that depend on all but the $i$th variable and $\ep(u) =\exp(2 \pi i u/p)$. Assuming each $|\omega_i(\textbf{x})|\le 1$, we are interested in obtaining upper bounds of the form
\begin{align*}
\left|S(\cX_1, \dots, \cX_n; \omega_1, \dots, \omega_n)\right|\le X_1\dots X_np^{-\delta},
\end{align*} 
where $|\cX_i|=X_i$. The first result in this direction is Vinogradov's bilinear estimate and states that 
\begin{align}
\label{eq:Vin}
\left|\sum_{x_1\in X_1}\sum_{x_2\in X_2}\omega_1(x_2)\omega_2(x_1)e_p(x_1x_2) \right|\le p^{1/2}X_1^{1/2}X_2^{1/2},
\end{align}
which is nontrivial provided $X_1X_2>p$. For values of $n\ge 3$  progress has been made through Additive Combinatorics with the first results due to Bourgain, Glibichuck and Konyagin~\cite{BouGliKon} under some restrictions on the sets, weights and number of variables occuring in~\eqref{eq:S(X1XN)}  although their result was general enough to obtain new estimates for sums over small subgroups. Bourgain~\cite{Bour2} extended the results of~\cite{BouGliKon} and obtained an optimal result with respect to the size of $X_1\dots X_n$. In particular, Bourgain showed that for all $\varepsilon>0$ there exists a $\delta>0$ such that
\begin{align*}
\sum_{x_1\in\cX_1}\dots \sum_{x_n\in\cX_n} e_p(x_1\dots x_n)\ll X_1\dots X_np^{-\delta},
\end{align*}
provided
\begin{align*} X_i>p^{\varepsilon}, \quad X_1\dots X_n\ge p^{1+\varepsilon},\end{align*}
and we note that Bourgain gives the dependence of $\delta$ on $\varepsilon$. Recently, Shkredov~\cite{Shkr3} has made significant quantitative improvements to the results of Bourgain by exploiting a  direct connection with geometric incidence estimates of Rudnev~\cite{Rud}. Of particular relevance are the results of Petridis and Shparlinski~\cite{PetShp} and Macourt \cite{Mac1} for recent estimates of three and four dimensional multilinear sums and Shkredov~\cite{Shkr} for the sharpest current results for exponential sums over subgroups of medium size. We mention that a direct application of the methods from~\cite{PetShp,Mac1} are unable to give bounds for multilinear sums beyond four dimensional sums. However, in this paper we are able to break through this barrier and apply related techniques to given new non-trivial results for multilinear sums beyond four variables.  
\newline

Given a set $\cA\subseteq \F_p$ and an integer $k$ we let $D^{\times}_k(\cA)$ count the number of solutions to the equation
\begin{align*}
(a_1-a_2)(a_3-a_4)\dots (a_{2k-1}-a_{2k})=(b_1-b_2)(b_3-b_4)\dots (b_{2k-1}-b_{2k}),
\end{align*}
for $a_i,b_i\in \cA$. The quantity $D_k(\cA)$ plays an important role in our arguments and we obtain some new estimates for $D_k(\cA),$ one of which improves the error term in a result of Shkredov~\cite[Theorem~32]{Shkr3} for sets of cardinality $|\cA|\ge p^{1/2}$. We then apply our estimates to obtain some new bounds for sums of the form~\eqref{eq:S(X1XN)} which are motivated by applications to exponential sums with sparse polynomials and Weyl sums over small generalized arithmetic progressions. 
\newline


Given tuples of integers $a_1,\dots,a_t$ and $k_1,\dots,k_t$ we define the $t$-sparse polynomial
\begin{align} \label{eq:poly123}
\Psi(X) = \sum_{i=1}^t a_iX^{k_i},
\end{align}
and consider the exponential sums
\begin{align} \label{eq:mult}
T_\chi(\Psi) = \sum_{x\in \F^*_p} \chi(x) \ep (\Psi(x)).
\end{align}

Multinomial exponential sums of the form \eqref{eq:mult} have been studied extensively. We first note by the Weil bound, see \cite[Appendix 5, Example 12]{Weil}
\begin{align*}
T_\chi(\Psi) \le p^{1/2} \max(k_1, \dots, k_t).
\end{align*}
 When $k_1,\dots,k_t$ are small the above estimate is sharp and we consider the case when $\max(k_1, \dots, k_t)$ grows with $p$. In this setting, progress on the simplest case of monomials was first made by  Shparlinski \cite{Shp1} and was further improved by Heath-Brown and Konyagin~\cite{HBK} using techniques based on Stepanov's method, although the current sharpest estimates are based on Additive Combinatorics, see for example~\cite{BouGliKon,BouKon}. More general sums of the form~\eqref{eq:mult} were first considered by Mordell~\cite{Mord} and are often referred to as Mordell's exponential sum and we refer the reader to \cite{Bour,CoCoPi1, CoCoPi2, CoPi1, CoPi2, CoPi3, CoPi4} for previous estimates of these sums. We also mention the cases of trinomials and quadrinomials have been given new bounds in \cite{Mac2} and \cite{MSS} and we follow these techniques to reduce to multilinear sums of the form~\eqref{eq:S(X1XN)}. 
\newline

A second application of our bound for the sums~\eqref{eq:S(X1XN)} is a new estimate Weyl sums over small generalized arithmetic progressions. Generalized arithmetic progressions are defined as sets of the form
$$\cA=\{\alpha_1h_1+\dots+\alpha_rh_r+\beta \ : \ 1\le h_i\le H_i\}.$$
For $\cA$ as above, we define the rank of $\cA$ to be $r$ and say that $\cA$ is proper if 
$$|\cA|=H_1\dots H_r.$$
Shao~\cite{Shao1} has previously shown that for any polynomial $F$ we have
\begin{align}
\sum_{a\in \cA}\e_p(F(a))\ll_r p^{1/2+o(1)},
\end{align}
which can be considered a P\'{o}lya-Vinogradov type estimate for generalised arithmetic progressions. We use our estimates for~\eqref{eq:S(X1XN)} to obtain a power saving for Weyl sums over proper generalized arithmetic progressions with an essentially optimal range on the cardinality of $\cA$, see Theorem~~\ref{thm:genap}.
\newline

For the entirety of this paper we let $|\cX_i|=X_i$, and similarly for other sets $|\cY|=Y.$ We also use the notation $A \ll B$ to indicate $A \le c|B|$ for some absolute constant $c$ and similarly $A \ll_k B$ to mean the same where $c$ depends on some parameter $k$.

\subsection{Main Results}
In what follows we keep notation as in~\eqref{eq:S(X1XN)}.
\begin{theorem} \label{thm:multlin2} 
Let $n\ge 4$,  $\cX_i \subset \F^{*}_p$ subsets satisfying 
\begin{align*}
|\cX_i|=X_i, \quad X_1\ge X_2\ge\dots\ge  X_{n},
\end{align*}
and 
\begin{align*}
X_1X_n^{1/2} \le p.
\end{align*}

 Then we have 
\begin{align*}
&S(\cX_1, \dots, \cX_n; \omega_1, \dots, \omega_n)  \\ &\ll_n X_1\dots X_n \left(\frac{1}{X_1^{1/2}}+\dots+\frac{1}{X_n^{1/2^n}}+p^{\frac{1}{2^n}}X_1^{-\frac{1}{2^n}} X_n^{-\frac{1}{2^{n+1}}}\prod_{i=2}^{n-1} B_{n}(\cX_i)\right)
\end{align*}
where
\begin{align*}
B_n(\cX) = \left\{
\begin{array}{ll}
p^{\frac{1}{2^{2n-3}(n-2)}}X^{-\frac{2^{n-2}+1}{2^{2n-3}(n-2)}+o(1)},& \text{if $ p^{\frac{1}{2}+\frac{1}{2^{n-1}+2}}\ge X \ge p^{\frac{217}{433}}$},\\
X^{-\frac{2^{n-2}-1+2c_1}{2^{2n-3}(n-2)}+o(1)},& \text{if $ p^{\frac{217}{433}} > X\ge p^{\frac{48}{97}}$}, \\
X^{-\frac{2^{n-2}-1+2c_2}{2^{2n-3}(n-2)}+o(1)},& \text{if $ X <p^{\frac{48}{97}}$},
\end{array}
\right.
\end{align*}
and $c_1=\frac{1}{434}$ and $c_2 = \frac{1}{192}$.
\end{theorem}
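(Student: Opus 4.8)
The plan is to remove the variables $x_1,\dots,x_n$ one at a time by repeated applications of the Cauchy--Schwarz inequality, using at each step that the weight omitting the current variable has modulus at most one (so it may simply be discarded) and that the phase remains multilinear in the untreated variables. Concretely, one first fixes $x_2,\dots,x_n$, bounds $|S|$ by $\sum_{x_2,\dots,x_n}|W(x_2,\dots,x_n)|$ where $W$ is the inner sum over $x_1$ (this is where $|\omega_1|\le 1$ is used), and applies Cauchy--Schwarz to double $x_1$; in the resulting sum $x_1$ now appears only through the difference $x_1-x_1'$. Iterating, the $j$-th application of Cauchy--Schwarz either terminates the process, contributing the ``trivial'' factor $X_j^{-1/2^j}$ --- which is exactly why the terms $X_1^{-1/2}+\dots+X_n^{-1/2^n}$ appear in the statement --- or else passes to a higher moment whose phase is, up to the outermost surviving variable, a product of the paired differences produced so far. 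Running the iteration through the middle variables $x_2,\dots,x_{n-1}$ reduces the estimate to counting solutions of equations of the form $\prod_j(a_{2j-1}-a_{2j})=\prod_j(b_{2j-1}-b_{2j})$ over each $\cX_i$, i.e.\ to the quantities $D^{\times}_k(\cX_i)$ with $k$ of order $2^{n-2}$, while the two extreme variables $x_1$ and $x_n$ are handled by a Vinogradov-type bilinear estimate together with a completion of the sum, for which the hypothesis $X_1X_n^{1/2}\le p$ is precisely what is needed and which accounts for the asymmetric factor $X_n^{-1/2^{n+1}}$.

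At this point I would substitute the bounds for $D^{\times}_k$ established earlier in the paper. The three cases in the definition of $B_n(\cX)$ mirror the three ranges of $|\cX|$ in which the sharpest available estimate for $D^{\times}_k(\cX)$ changes form: for $|\cX|\ge p^{217/433}$ one uses the estimate coming from Rudnev's point--plane incidence bound in the spirit of Shkredov's work, whereas for $p^{217/433}>|\cX|\ge p^{48/97}$ and for $|\cX|<p^{48/97}$ one invokes the refined incidence and energy bounds available for smaller subsets of $\F_p$, the constants $c_1=\frac{1}{434}$ and $c_2=\frac{1}{192}$ being the numerical outputs of optimising those bounds. Feeding these into the iteration, taking the root dictated by the successive Cauchy--Schwarz steps (together with the square root from the bilinear step), and restoring the prefactor $X_1\cdots X_n$ then produces the stated main term $p^{1/2^n}X_1^{-1/2^n}X_n^{-1/2^{n+1}}\prod_{i=2}^{n-1}B_n(\cX_i)$.

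The main obstacle is the bookkeeping of the weights through the iteration. Since each $\omega_i$ depends on every variable except $x_i$, one cannot complete and annihilate the $x_1$-sum immediately after the first Cauchy--Schwarz --- the weights $\omega_2,\dots,\omega_n$ still involve $x_1$ --- so the order in which variables are doubled and weights are discarded must be chosen so that, at the moment a variable is finally paired against a pure additive character, no surviving weight involves it. Arranging this uniformly for all $n\ge 4$, rather than only in the previously treated cases $n=3,4$, and verifying that each ``trivial'' term $X_j^{-1/2^j}$ genuinely dominates when the corresponding $X_j$ is large, is where most of the effort goes; once the reduction to $D^{\times}_k$ and the bilinear sum is in place, what remains is substitution together with the routine, if lengthy, optimisation of exponents across the three ranges.
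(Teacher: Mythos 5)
Your overall strategy---iterated Cauchy--Schwarz with the weights discarded one at a time (this is exactly Lemma~\ref{lem:SXin}), reduction to counting coincidences of products of differences, and a bilinear step for the two remaining variables---is the same as the paper's, but as written your sketch has two concrete gaps. First, the energies that arise are $D^{\times}_{k}(\cX_i)$ with $k=n-2$, not with $k$ of order $2^{n-2}$: in the iteration each of the middle variables is doubled exactly once, so the collected product $\lambda=(x_2-y_2)\cdots(x_{n-1}-y_{n-1})$ has $n-2$ factors, and one further Cauchy--Schwarz together with the H\"older inequality of Lemma~\ref{lem:Dktimes*} gives $\sum_{\lambda}J(\lambda)^2=D^{\times,*}_{n-2}(\cX_2,\dots,\cX_{n-1})\le\prod_{i=2}^{n-1}D^{\times,*}_{n-2}(\cX_i)^{1/(n-2)}$. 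It is precisely $k=n-2$ that produces the factors $(n-2)$ and $2^{n-2}$ in the exponents of $B_n$ (via Corollaries~\ref{cor:Dktimes} and~\ref{cor:Dktimes2}), so the mis-count of $k$ would derail the exponent bookkeeping even though your description of the three ranges and of the constants $c_1,c_2$ is otherwise consistent with the paper.

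Second, and more seriously, your treatment of the extreme variables would fail. A Vinogradov-type bilinear bound ``with completion'' requires no hypothesis on the sets and cannot produce the factor $X_n^{-1/2^{n+1}}$. What the paper actually does is write the remaining sum as a bilinear form $\sum_{\lambda}\sum_{\mu}J(\lambda)\eta_{\lambda}I(\mu)\ep(\lambda\mu)$ with $\mu=x_n(x_1-y_1)$ and apply Lemma~\ref{lem:bilin}; the whole point is then the second moment $\sum_{\mu}I(\mu)^2=N(\cX_n,\cX_1,\cX_1)$. The trivial bound is $N(\cX_n,\cX_1,\cX_1)\ll X_1^3X_n^{2}$, which after taking roots gives no saving in $X_n$ at all; the stated exponent needs the incidence-based estimate of Lemma~\ref{lem:NXYZ}, namely $N\ll X_1^4X_n^2/p+X_1^{3}X_n^{3/2}+X_1^{3}X_n$, and the hypothesis $X_1X_n^{1/2}\le p$ is used exactly to make the first term at most $X_1^3X_n^{3/2}$. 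Your proposal never identifies this ingredient and instead attributes both the hypothesis and the asymmetric factor to a completion step that does not supply them; without the $N(\cX_n,\cX_1,\cX_1)$ bound the argument proves a strictly weaker estimate than the theorem claims.
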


We give an example of when Theorem \ref{thm:multlin2} is nontrivial. Suppose $n=6$ and $X_1=X_2=\dots =X_6\le p^{\frac{48}{97}}$. Then we have
\begin{align*}
S(\cX_1, \dots, \cX_6; \omega_1, \dots, \omega_6) \ll p^{\frac{1}{64}}X_1^{\frac{3110399}{524288}+o(1)}.
\end{align*}
One can see that this is stronger than the trivial bound
\begin{align*}
S(\cX_1, \dots, \cX_6; \omega_1, \dots, \omega_6)\ll X_1^6
\end{align*}
for $X_1>p^{8/27}$. 
In the case of sets of cardinality a little larger than $p^{1/2}$ we can obtain sharper estimates.
\begin{theorem}
\label{thm:multlin3}
Let $\cX_i \subset \F_p$ satisfy $|\cX_i|=X_i,$ $X_1\ge X_2\ge\dots\ge  X_{n}$ 
\begin{align}
\label{thm:multlin3cond}
|\cX_i|\ge p^{1/2+1/(2^{n+1}-6)}.
\end{align}
Then we have 

\begin{align*}
&|S(\cX_1, \dots, \cX_n; \omega_1, \dots, \omega_n)|\ll_n \\ & X_1\dots X_n\left(\frac{1}{X_1^{1/2}}+\dots+\frac{1}{X_n^{1/2^n}}+p^{o(1)}\left(\frac{p^{1/2}}{(X_1\dots X_n)^{1/n}}\right)^{1/2^n}\right).
\end{align*}

\end{theorem}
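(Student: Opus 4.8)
The plan is to eliminate the weights $\omega_i$ by iterating the Cauchy--Schwarz inequality, peeling off one variable at a time, until the sum is reduced to a multiplicative--difference energy of the type $D^{\times}_n$ from the introduction together with a trivially completed sum over frequencies; one then substitutes the large-cardinality estimate for that energy, which is precisely where the hypothesis \eqref{thm:multlin3cond} enters.

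First I would fix the ordering $x_1,\dots,x_n$ and peel off $x_1$. Since $\omega_1$ does not involve $x_1$, one application of Cauchy--Schwarz in $(x_2,\dots,x_n)$ gives $|S|^{2}\le X_2\cdots X_n\,T_1$, where on expanding the square $T_1$ runs over $x_2,\dots,x_n$ and a pair $(x_1,x_1')\in\cX_1^{2}$, its phase carries the factor $x_1-x_1'$, and its new weights $\omega_i(\cdot)\overline{\omega_i(\cdot)}$ with $i\ge 2$ are bounded by $1$ and still omit the $i$-th variable; absorbing $x_1-x_1'$ into one of the surviving variables by a dilation, $T_1$ is, for each fixed pair, again of the shape \eqref{eq:S(X1XN)} in $n-1$ variables. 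Iterating — peeling $x_2$, then $x_3$, and so on, always extracting at the $j$-th step the weight (a bounded product of conjugated copies of $\omega_j$) that is free of the variable summed inside — at the $j$-th step one separates the new pair sum $\sum_{x_j,x_j'}$ into its diagonal $x_j=x_j'$, on which the corresponding difference vanishes, the phase trivializes, and the iteration can be stopped with a trivial bound; tracing this diagonal back through the $j$ squarings performed so far gives rise to the term $X_1\cdots X_n\,X_j^{-1/2^{j}}$. Carrying out the same step for $x_n$ and then reorganizing the innermost sum according to the value $\lambda$ of the product of differences, the off-diagonal part is reduced to
\begin{align*}
|S|^{2^{n}}\ \ll_n\ \bigl(\text{a monomial }P\text{ in }X_1,\dots,X_n\bigr)\sum_{\lambda\ne 0}J_n(\lambda)\,\ep(\lambda),
\end{align*}
where $J_n(\lambda)=\#\{(x_j,x_j')\in\cX_j^{2},\ 1\le j\le n:\ \prod_{j=1}^{n}(x_j-x_j')=\lambda\}$. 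One last Cauchy--Schwarz, using $\sum_{\lambda}|\ep(\lambda)|^{2}\le p$ and $\sum_{\lambda}J_n(\lambda)^{2}=D^{\times}_n(\cX_1,\dots,\cX_n)$ (the mixed-set analogue of the energy from the introduction, which is $D^{\times}_n(\cA)$ when all $\cX_i=\cA$), bounds the right-hand side by $P\,p^{1/2}\,(D^{\times}_n(\cX_1,\dots,\cX_n))^{1/2}$. Substituting the large-cardinality estimate for $D^{\times}_n$, simplifying the exponents (the monotonicity $X_1\ge\cdots\ge X_n$ makes $(X_1\cdots X_n)^{1/n}$ dominate the geometric mean of any subfamily, and \eqref{thm:multlin3cond} is exactly the condition making the main term of that estimate dominate), taking $2^{n}$-th roots and using $(a+b)^{1/2^{n}}\le a^{1/2^{n}}+b^{1/2^{n}}$ then yields the stated bound.

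The hard part will be organizing the Cauchy--Schwarz iteration so that the weights stay under control: because each $\omega_i$ depends on all but one of the $n$ variables, the peeling order must be arranged so that at every stage the weight extracted is genuinely free of the variable summed inside, and one must check that, after each step, the weights remain bounded products of (conjugated) copies of the original $\omega_i$ of the same structural type — so the reduction to \eqref{eq:S(X1XN)} persists — and that the multiplier picked up at each step is harmlessly absorbed by a dilation. The remaining work is bookkeeping: one must verify that the accumulated monomial $P$, combined with $p^{1/2}$ and $(D^{\times}_n)^{1/2}$, collapses precisely to $X_1\cdots X_n\bigl(p^{1/2}/(X_1\cdots X_n)^{1/n}\bigr)^{1/2^{n}}$, that the threshold $p^{1/2+1/(2^{n+1}-6)}$ is exactly the one forcing the main term of the energy estimate to win, and that every truncated diagonal contribution is dominated by $X_1\cdots X_n\sum_{j}X_j^{-1/2^{j}}$.
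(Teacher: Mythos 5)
Your reduction via iterated Cauchy--Schwarz to
\begin{align*}
|S|^{2^{n}}\ \ll_n\ (\text{diagonal terms})+(X_1\cdots X_n)^{2^{n}-2}\Bigl|\sum_{\lambda\neq 0}I(\lambda)\,\ep(\lambda)\Bigr|,
\end{align*}
with $I(\lambda)$ counting $\prod_{j}(x_j-y_j)=\lambda$, is exactly the paper's Lemma~\ref{lem:SMV1} up to that point. The gap is in your final step: bounding $\bigl|\sum_{\lambda\neq 0}I(\lambda)\ep(\lambda)\bigr|\le p^{1/2}\bigl(\sum_{\lambda}I(\lambda)^2\bigr)^{1/2}=p^{1/2}\,D^{\times,*}_n(\cX_1,\dots,\cX_n)^{1/2}$ with the \emph{raw} energy cannot give anything nontrivial in the regime of Theorem~\ref{thm:multlin3}. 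When every $X_i\ge p^{1/2+\varepsilon}$ the raw energy is dominated by its main term, $D^{\times,*}_n\asymp (X_1\cdots X_n)^{4}/p$, so your bound returns $(X_1\cdots X_n)^{2^{n}-2}\cdot p^{1/2}\cdot (X_1\cdots X_n)^{2}p^{-1/2}=(X_1\cdots X_n)^{2^{n}}$, i.e.\ the trivial bound; the only way your exponent bookkeeping "collapses" to the stated saving is if you silently replace $D^{\times}_n$ by its error term, which is not legitimate. Relatedly, your reading of the hypothesis is inverted: \eqref{thm:multlin3cond} is not "the condition making the main term of the energy estimate dominate" (that is precisely what kills the argument); it is the condition under which the error term in the asymptotic formula for $D^{\times}_k$ is $O(A^{4k-2+o(1)})$ (Corollary~\ref{cor:Dksharp}).

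The missing idea is mean subtraction before the last Cauchy--Schwarz. Write $\Delta=\prod_i X_i(X_i-1)/(p-1)$ and split $I(\lambda)=\Delta+(I(\lambda)-\Delta)$; the main term contributes $\Delta\sum_{\lambda=1}^{p-1}\ep(\lambda)=-\Delta$, which is negligible, and only then does Cauchy--Schwarz give $\sum_{\lambda}|I(\lambda)-\Delta|\le p^{1/2}\,\widetilde D^{\times,*}_n(\cX_1,\dots,\cX_n)^{1/2}$ with the \emph{centered} energy. Factorizing over the sets needs the centered H\"older inequality (Lemma~\ref{lem:tildeDktimes*}, over non-principal characters only, not Lemma~\ref{lem:Dktimes*}), and then \eqref{thm:multlin3cond} together with Corollary~\ref{cor:Dksharp} gives $\widetilde D^{\times,*}_n(\cX_i)\ll X_i^{4n-2}\log^4 X_i$, which is what produces the factor $\bigl(p^{1/2}/(X_1\cdots X_n)^{1/n}\bigr)^{1/2^{n}}$. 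Without this centering your plan stalls at the trivial bound, so as written the proof does not go through.
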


The following is a consequence of Theorem~\ref{thm:multlin2}.
\begin{theorem} \label{thm:multinom} 
Let $\Psi(X)$ be a multinomial of the form \eqref{eq:poly123}, with co-efficients $a_i \in \F^*_p$ for $i=1, \dots , t$. We define
\begin{align*}
\alpha_{k_i}= \gcd(k_i, p-1)
\end{align*}
and
\begin{align*}
\beta_{k_i}= \frac{\alpha_{k_i}}{\gcd(\alpha_{k_i}, \alpha_{k_t})}.
\end{align*}
Suppose $\beta_{k_1}\ge \dots \ge \beta_{k_{t-1}}$. Then
\begin{align*}
&T_\chi(\Psi) \\ & \ll p\left( \left(\frac{\alpha_{k_t}}{p-1}\right)^{\frac{1}{2}}+\beta_{k_1}^{\frac{-1}{2^2}} +\dots+\beta_{k_{t-1}}^{\frac{-1}{2^{t}}}+p^{\frac{1}{2^t}}C_t(\alpha_{k_t}) \prod_{i=1}^{t-2}D_t(\beta_{k_i})\right)
\end{align*}
where
\begin{align*}
C_t(\alpha) = \left\{
\begin{array}{ll}
\alpha^{\frac{3}{2^{t+1}}}p^{-\frac{3}{2^{t+1}}},& \text{if $ \alpha\ge p^{\frac{1}{2}}\log{p}$},\\
\alpha^{\frac{1}{2^{t+1}}}p^{-\frac{1}{2^t}}, &\text{if  $\alpha< p^{\frac{1}{2}}\log{p}$},
\end{array}
\right.
\end{align*}
and
\begin{align*}
D_t(\beta) = \left\{
\begin{array}{ll}
p^{-\frac{1}{2^t(t-2)}},& \text{if $ \beta \ge p^{\frac{1}{2}}\log{p}$},\\
\beta^{-\frac{1}{2^{t-1}(t-2)}}, & \text{if  $\beta< p^{\frac{1}{2}}\log{p}$}.
\end{array}
\right.
\end{align*}
\end{theorem}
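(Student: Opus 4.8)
The plan is to deduce this from Theorem~\ref{thm:multlin2} in two stages: first reduce $T_\chi(\Psi)$ to a $t$-linear exponential sum of the form~\eqref{eq:S(X1XN)}, as in the treatment of trinomials and quadrinomials in~\cite{Mac2,MSS}, and then feed the resulting bound through and simplify it.

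For the reduction, the idea is a multiplicative change of variables that decouples the monomials of $\Psi$. For $j=1,\dots,t-1$ let $G_j\le\F_p^*$ be the subgroup of order $\alpha_{k_j}$; since $\alpha_{k_j}=\gcd(k_j,p-1)$ divides $k_j$, we have $z^{k_j}=1$ for every $z\in G_j$. Writing $x=z_1z_2\cdots z_t$ with $z_1\in\F_p^*$ and $z_{j+1}\in G_j$ (each $x\in\F_p^*$ occurring with multiplicity $\alpha_{k_1}\cdots\alpha_{k_{t-1}}$), we get
\begin{align*}
T_\chi(\Psi)=\frac{1}{\alpha_{k_1}\cdots\alpha_{k_{t-1}}}\sum_{z_1\in\F_p^*}\sum_{z_2\in G_1}\cdots\sum_{z_t\in G_{t-1}}\chi(z_1\cdots z_t)\,\ep\!\left(\sum_{i=1}^{t}a_i(z_1\cdots z_t)^{k_i}\right),
\end{align*}
and the relation $z_{j+1}^{k_j}=1$ makes the $j$-th monomial independent of $z_{j+1}$ for every $j<t$, while $a_t(z_1\cdots z_t)^{k_t}$ involves all the variables. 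Passing to the $k_t$-th powers of the variables turns $a_t(z_1\cdots z_t)^{k_t}$ into a clean product, realising $T_\chi(\Psi)$ (up to the above multiplicity) as a $t$-linear sum $S(\cX_1,\dots,\cX_t;\omega_1,\dots,\omega_t)$ in which $\cX_1$ is a coset of the group of $k_t$-th powers, so $|\cX_1|=(p-1)/\alpha_{k_t}$, the remaining $\cX_i$ are subgroups of orders $\alpha_{k_{i-1}}/\gcd(\alpha_{k_{i-1}},\alpha_{k_t})=\beta_{k_{i-1}}$, and the character together with the lower monomials is absorbed into weights $\omega_i$ with $|\omega_i|\le1$, $\omega_i$ independent of $x_i$. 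Since $\mathrm{lcm}(\alpha_{k_j},\alpha_{k_t})\mid p-1$ gives $(p-1)/\alpha_{k_t}\ge\beta_{k_j}$ for all $j$, the sets are in the decreasing order $X_1=(p-1)/\alpha_{k_t}\ge\beta_{k_1}\ge\cdots\ge\beta_{k_{t-1}}$ demanded by Theorem~\ref{thm:multlin2}, and a short computation shows the multiplicity factor cancels against $X_1\cdots X_t$ up to $\asymp p$.

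Next I would apply Theorem~\ref{thm:multlin2} with $n=t$. Its main terms $X_1^{-1/2}+\cdots+X_t^{-1/2^t}$ become $(\alpha_{k_t}/(p-1))^{1/2}+\beta_{k_1}^{-1/2^2}+\cdots+\beta_{k_{t-1}}^{-1/2^t}$, and in the residual term $p^{1/2^t}X_1^{-1/2^t}X_t^{-1/2^{t+1}}\prod_{i=2}^{t-1}B_t(\cX_i)$ the three $B_n$-ranges are replaced by the cruder two-case estimate: the middle factors $B_t(\beta_{k_{i-1}})$ are bounded by $D_t(\beta_{k_i})$ for $i=2,\dots,t-1$, and the part depending on $\alpha_{k_t}$ and $p$ is packaged into $p^{1/2^t}C_t(\alpha_{k_t})$, the thresholds $p^{217/433}$ and $p^{48/97}$ being replaced by the single cutoff $p^{1/2}\log p$; collecting everything gives the stated bound.

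The step I expect to be the main obstacle is the reduction itself: one must do the cardinality and fibre bookkeeping for the passage to $k_t$-th powers carefully, verify that the character and the lower monomials really do descend to admissible weights on the sets $\cX_i$ (this is where divisibility relations among the $\gcd(k_i,\alpha_{k_j})$ enter and where a little extra care over the naive count is needed to reach the sharp form of $C_t$), and deal with the range where Theorem~\ref{thm:multlin2}'s hypothesis $X_1X_n^{1/2}\le p$ fails — i.e.\ when $\beta_{k_{t-1}}$ is large relative to $\alpha_{k_t}^2$ — in which case the main term $\beta_{k_{t-1}}^{-1/2^t}$, or a direct estimate (for instance via Theorem~\ref{thm:multlin3} when all the sets are large), already gives the claimed saving. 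The subsequent manipulation of the error terms is routine.
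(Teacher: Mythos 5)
Your reduction step is essentially the paper's: the substitution $x=x_1\cdots x_t$ with $x_i$ ranging over the subgroups of order $\alpha_{k_i}$ (and $x_t\in\F_p^*$), the observation that $x_i^{k_i}=1$ makes the $i$-th monomial and the character into admissible weights, and the passage to $k_t$-th powers producing sets of sizes $(p-1)/\alpha_{k_t}$ and $\beta_{k_1},\dots,\beta_{k_{t-1}}$, all match the paper. The gap is in the second stage: the stated bound cannot be deduced from Theorem~\ref{thm:multlin2}, and your claim that the factors $B_t(\cX_i)$ of that theorem "are bounded by $D_t(\beta_i)$" is backwards. The functions $C_t$ and $D_t$ encode \emph{subgroup-specific} energy estimates, and they are strictly \emph{sharper} than the general-set input behind Theorem~\ref{thm:multlin2}. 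Concretely, for $X<p^{48/97}$ Theorem~\ref{thm:multlin2} gives $B_t(\cX)=X^{-(2^{t-2}-1+2c_2)/(2^{2t-3}(t-2))+o(1)}$ with $c_2=1/192$, whereas $D_t(\beta)=\beta^{-1/(2^{t-1}(t-2))}=\beta^{-2^{t-2}/(2^{2t-3}(t-2))}$; since $2^{t-2}-1+2c_2<2^{t-2}$, one has $B_t\ge D_t$, so the general-set theorem only yields a weaker conclusion. Moreover Theorem~\ref{thm:multlin2} provides no factor $B_t(\cX)$ at all when $X>p^{1/2+1/(2^{t-1}+2)}$, while Theorem~\ref{thm:multinom} allows $\beta_{k_i}$ as large as $p-1$, where the claimed $D_t(\beta)=p^{-1/(2^t(t-2))}$ rests on the subgroup bound $D_k^{\times}(\cG)\ll G^{4k}p^{-1}$ (Corollary~\ref{cor:Dtimesgroup}, via Lemma~\ref{Dtimesgroup} from \cite{MSS}); no comparable estimate is available for arbitrary sets of that size. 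Similarly, the factor $C_t(\alpha_{k_t})$ does not arise from the coupling $X_1^{-1/2^t}X_t^{-1/2^{t+1}}$ of Theorem~\ref{thm:multlin2} (which needs $X_1X_t^{1/2}\le p$ and Lemma~\ref{lem:NXYZ}), but from the subgroup collinearity bound $N(\cH,\cG,\cG)$ of Lemma~\ref{lem:NGHgroup}, which also removes the constraint $X_1X_n^{1/2}\le p$ that you correctly flag as problematic but then propose to patch in a way that is neither needed nor justified.

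What the paper actually does after the reduction is apply Lemma~\ref{lem:multlingroup}, a multilinear bound for multiplicative subgroups whose proof repeats the argument of Theorem~\ref{thm:multlin2} but with Corollary~\ref{cor:Dtimesgroup} and Lemma~\ref{lem:NGHgroup} substituted for the general-set estimates; the two-case cutoff at $p^{1/2}\log p$ in $C_t$ and $D_t$ is inherited from those subgroup lemmas, not a "cruder" merging of the three ranges in Theorem~\ref{thm:multlin2}. To repair your argument you should prove (or invoke) this subgroup version of the multilinear bound rather than the general-set theorem; as written, the proposal establishes a strictly weaker inequality than the one claimed, and only under additional size restrictions on the $\beta_{k_i}$ and $\alpha_{k_t}$.
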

We mention that Theorem \ref{thm:multinom} returns the same bound as \cite[Theorem 1.1]{Mac2} when $t=4$. We also mention the strength in this bound is that it relies on mutual greatest common divisors, rather than the size of the exponents. With this in mind, one can give examples of when this is stronger than all known bounds for a given $t$ by first ensuring that $\alpha_{k_t}$ is small and each of the powers are large. We direct the reader to \cite[Corollary 1.2]{Mac2} for such an example for the case $t=4$.

Combining ideas from the proof of Theorem~\ref{thm:multlin2} with estimates of Bourgain for multilinear sums we extend a result of Shao~\cite{Shao1} to the setting of  Weyl sums over small generalized arithmetic progressions.
\begin{theorem}
\label{thm:genap}
Let $p$ be prime, $\cA\subseteq \F_p$ a proper generalized arithmetic progression of rank $r$ and $F\in \F_p[X]$ a polynomial of degree $d$. For any $\varepsilon>0$ there exists some $\delta>0$ such that if 
\begin{align}
\label{eq:genAPcond}
|\cA|\ge p^{1/d+\varepsilon},
\end{align}
then
\begin{align*}
\sum_{a\in \cA}e_p(F(a))\ll_{r,d} |\cA|p^{-\delta}.
\end{align*}
\end{theorem}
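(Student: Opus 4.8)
\smallskip
\noindent\emph{Proof plan.} The plan is to reduce $S:=\sum_{a\in\cA}e_p(F(a))$, by van der Corput differencing along a \emph{short} sub-progression of $\cA$, to a weighted $d$-linear sum of the form \eqref{eq:S(X1XN)}, and then to invoke Bourgain's multilinear estimate~\cite{Bour2} in the weighted form (handled as in the proof of Theorem~\ref{thm:multlin2}); the hypothesis \eqref{eq:genAPcond} is exactly what places this $d$-linear sum in the non-trivial range of that estimate. One may assume $d\ge2$ (for $d=1$ the hypothesis forces $|\cA|\ge p^{1+\varepsilon}>p$ and is vacuous), that $F$ has degree exactly $d$ with leading coefficient $c_d\in\F_p^{*}$, and that $p$ is large in terms of $r,d$. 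Fix a small $\delta_0=\delta_0(\varepsilon,r,d)>0$. Partitioning $\cA$ into the $\prod_{j:H_j<p^{\delta_0}}H_j$ translates of the sub-GAP $\cA_L$ spanned by the directions with $H_j\ge p^{\delta_0}$, one has $S=\sum_{s}\sum_{a'\in\cA_L}e_p(F(a'+s))$ with each $F(\,\cdot+s)$ of degree $d$; since (number of translates)$\times|\cA_L|=|\cA|$, it is enough to bound each inner sum by $|\cA_L|\,p^{-\delta}$. So we may assume every side $H_j$ of $\cA$ is $\ge p^{\delta_0}$, while still $|\cA|\ge p^{1/d+\varepsilon/2}$ (taking $\delta_0\le\varepsilon/2r$).

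\smallskip
Let $\cH_0$ be the sub-GAP of $\cA-\cA$ in the same directions $\alpha_1,\dots,\alpha_r$ as $\cA$ with side lengths $\lfloor p^{-\delta_0}H_j\rfloor\ge1$, so $|\cH_0|,|\cH_0-\cH_0|\asymp_{r,d}p^{-r\delta_0}|\cA|$. Differencing $d-1$ times with shifts in $\cH_0$ and using that each intermediate summation set is a sub-GAP of $\cA$, one obtains
\[
|S|^{2^{d-1}}\ \ll_{r,d}\ \frac{|\cA|^{\,2^{d-1}-1}}{|\cH_0|^{\,d-1}}\,W,\qquad
W=\sum_{q_1,\dots,q_{d-1}\in\cH_0-\cH_0}w(q_1)\cdots w(q_{d-1})\Bigl|\sum_{x\in\cA_{\mathbf q}}e_p\bigl(\lambda q_1\cdots q_{d-1}x\bigr)\Bigr|,
\]
where $\lambda=\pm d!\,c_d\neq0$, the $w(q_i)=|\cH_0\cap(\cH_0-q_i)|/|\cH_0|\in[0,1]$ depend only on $q_i$, $\cA_{\mathbf q}=\bigcap_{I\subseteq\{1,\dots,d-1\}}\bigl(\cA-\textstyle\sum_{i\in I}q_i\bigr)$, and the $\cA$-dependent higher-order phases produced by the iterated differences are independent of $x$ and have been dropped. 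Trivially $W\le|\cA|\,|\cH_0|^{d-1}$, so it suffices to save a fixed power of $p$ in $W$. The point of the short shifts is that, each $q_i$ having $j$-th coordinate $\ll_{r,d}p^{-\delta_0}H_j$, there is a \emph{fixed} central sub-GAP $\cA^{-}\subseteq\cA$ (independent of $\mathbf q$) with $\cA^{-}\subseteq\cA_{\mathbf q}$ for every $\mathbf q$ and $|\cA\setminus\cA^{-}|\ll_{r,d}p^{-\delta_0}|\cA|$. Splitting $\sum_{x\in\cA_{\mathbf q}}=\sum_{x\in\cA^{-}}+\sum_{x\in\cA_{\mathbf q}\setminus\cA^{-}}$ gives $W=W_{\mathrm{main}}+W_{\mathrm{err}}$, with $|W_{\mathrm{err}}|\ll_{r,d}p^{-\delta_0}|\cA|\,|\cH_0|^{d-1}$ and, writing $\bigl|\sum_{x\in\cA^{-}}e_p(\lambda q_1\cdots q_{d-1}x)\bigr|=\eta(\mathbf q)\sum_{x\in\cA^{-}}e_p(\lambda q_1\cdots q_{d-1}x)$ with $|\eta(\mathbf q)|\le1$ depending only on $\mathbf q$,
\[
W_{\mathrm{main}}=\sum_{x\in\cA^{-}}\sum_{q_1,\dots,q_{d-1}\in\cH_0-\cH_0}\Bigl(w(q_1)\cdots w(q_{d-1})\eta(\mathbf q)\Bigr)\,e_p\bigl(\lambda q_1\cdots q_{d-1}x\bigr).
\]
After dilating $x$ by $\lambda$, this is precisely a sum of the shape \eqref{eq:S(X1XN)} in the $d$ variables $q_1,\dots,q_{d-1},x$: the inner summation set no longer depends on $\mathbf q$, so $\mathbf 1_{\cA^{-}}(x)$ is one admissible weight and $w(q_1)\cdots w(q_{d-1})\eta(\mathbf q)$, depending on all variables but $x$, is another, both of modulus $\le1$.

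\smallskip
The sets underlying $W_{\mathrm{main}}$ have sizes $\asymp_{r,d}p^{-r\delta_0}|\cA|$ (the first $d-1$) and $\asymp_{r,d}|\cA|$, each being $\ge p^{c}$ for a fixed $c>0$; and their product is $\asymp_{r,d}p^{-r(d-1)\delta_0}|\cA|^{d}$, which by $|\cA|\ge p^{1/d+\varepsilon/2}$ is $\ge p^{\,1+d\varepsilon/2-r(d-1)\delta_0}$. Choosing $\delta_0$ small enough in terms of $\varepsilon,r,d$ this exceeds $p^{1+c'}$ for a fixed $c'>0$, so (the passage to bounded weights, and to sets which are only proper GAPs, being exactly what the method behind Theorem~\ref{thm:multlin2} supplies) Bourgain's multilinear estimate~\cite{Bour2} yields $|W_{\mathrm{main}}|\ll_{r,d}|\cA|\,|\cH_0|^{d-1}p^{-\delta_1}$ for some $\delta_1=\delta_1(\varepsilon,r,d)>0$. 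Hence $W\ll_{r,d}|\cA|\,|\cH_0|^{d-1}p^{-\min(\delta_0,\delta_1)}$, so $|S|^{2^{d-1}}\ll_{r,d}|\cA|^{2^{d-1}}p^{-\min(\delta_0,\delta_1)}$, i.e.\ $\sum_{a\in\cA}e_p(F(a))\ll_{r,d}|\cA|\,p^{-\delta}$ with $\delta=\min(\delta_0,\delta_1)/2^{d-1}$.

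\smallskip
The step I expect to be the main obstacle is disentangling the inner variable from the outer ones. In a naive differencing $\cA_{\mathbf q}=\bigcap_I(\cA-\sum_{i\in I}q_i)$, and the factor of $\mathbf 1_{\cA_{\mathbf q}}$ indexed by $I=\{1,\dots,d-1\}$ involves all $d$ variables at once, so the sum is not of the form \eqref{eq:S(X1XN)}, and no linear substitution removes this coupling without destroying the multilinear phase $\lambda q_1\cdots q_{d-1}x$. Shifting by the short progression $\cH_0$ localizes the problem away, at the price of shrinking the shift-set by a polynomial factor $p^{-\delta_0}$; this is affordable precisely because \eqref{eq:genAPcond} leaves a margin of size $p^{d\varepsilon}$ over Bourgain's threshold $p^{1+o(1)}$. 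It is also the point where one improves on Shao~\cite{Shao1}: a completion argument together with the Weil bound only gives the P\'olya--Vinogradov-type bound $p^{1/2+o(1)}$ (which needs $|\cA|\gg p^{1/2}$), whereas retaining the $d$-linear structure and feeding it to Bourgain's estimate extends the admissible range down to $|\cA|\ge p^{1/d+\varepsilon}$ — the natural threshold, since $d-1$ differencings of a degree-$d$ polynomial produce a $d$-fold product and hence require $|\cA|^{d}\ge p^{1+\varepsilon}$.
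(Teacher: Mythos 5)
Your proposal is correct in outline, but it takes a genuinely different route from the paper. The paper never differences the polynomial: it completes the sum by writing $a=a_1-a_2-\dots-a_d$ with $a_1$ in the $d$-fold sumset $d\cA$, detects membership in $\cA$ through its Fourier transform, and uses Shao's bound $\sum_{z}|\widehat \cA(z)|\ll_r p(\log p)^r$ (Lemma~\ref{lem:genAPf}) together with $|d\cA|\ll_r |\cA|$ to reduce, at a cost of only $p^{o(1)}$, to a single weighted $d$-linear sum $T(z_0)$; the multilinear phase appears algebraically, since in the expansion of $F(a_1-\dots-a_d)$ every monomial other than $a_1\cdots a_d$ omits some variable and is absorbed into the weights. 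You instead keep the sum over $\cA$ and perform $d-1$ van der Corput differencings along a short sub-progression $\cH_0$, removing the coupling between the shifts and the summation set $\cA_{\mathbf q}$ via the fixed central sub-progression $\cA^{-}$. Both routes then need the same final mechanism: a \emph{weighted} $d$-linear sum is not directly covered by Bourgain's Lemma~\ref{lem:bourml}, and the passage you leave implicit (``as supplied by the method behind Theorem~\ref{thm:multlin2}'') is exactly what the paper carries out for $T(z_0)$ --- Lemma~\ref{lem:SXin}, one further Cauchy--Schwarz, and pigeonholing the primed variables to land on an unweighted sum over translated sets --- so your sketch does close, with the cardinality bookkeeping you indicate. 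As for what each approach buys: the paper's completion is shorter, uses the GAP structure only through the two clean inputs above, and loses no fixed power of $p$ in the reduction; your differencing avoids Shao's lemma entirely (the GAP enters only through the existence of proportionally short sub-progressions and the stability $\cA^{-}+\sum_{i\in I}q_i\subseteq \cA$), so it is more self-contained and would extend to any set with such shift-stability, at the price of spending part of the $\varepsilon$-margin on the $p^{-\delta_0}$ shrinking and of the extra decoupling step. If you write it up, tidy a few small points: discard $q_i=0$ (and $0\in\lambda\cA^{-}$) so the sets lie in $\F_p^{*}$ as in \eqref{eq:S(X1XN)}, note $p>d$ so that $\lambda=d!\,c_d\neq 0$, and record that properness of $\cA$ is what makes $\cH_0$ and $\cA^{-}$ proper and the translate decomposition disjoint.
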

We note the condition $|\cA|\ge p^{1/d+\varepsilon}$ is sharp which may be seen by considering the example
\begin{align*}
\cA=\{1,2,\dots,\lfloor p^{1/d}/10 \rfloor\}, \quad F(x)=x^{d},
\end{align*}
so that 
\begin{align*}
\sum_{a\in \cA}e_p(F(a))\gg |\cA|.
\end{align*}
\section{Multilinear Exponential Sums}
\subsection{Reduction  mean values}
\label{sec:mv}
The following result is a variant of \cite[Lemma~2.10]{PetShp} which is more suitable for applications to exponential sums when the variables may run through sets of differing cardinalities.
\begin{lemma} \label{lem:SXin}
Let $n \ge 2$. Suppose $S(\cX_1, \dots, \cX_n; \omega_1, \dots, \omega_n)$ is defined as in \eqref{eq:S(X1XN)}.  Then
\begin{align*}
&|S(\cX_1, \dots, \cX_n; \omega_1, \dots, \omega_n)|^{2^{n-1}}\ll (X_1\dots X_n)^{2^{n-1}}\left(\frac{1}{X_n^{2^{n-2}}}+\dots+\frac{1}{X_2} \right) \\
& \qquad \qquad + X_1^{2^{n-1}-1}(X_2 \dots X_n)^{2^{n-1}-2} \sum_{\substack{x_2,y_2 \in\cX_2 \\ x_2\neq y_2}}\dots \sum_{\substack{x_n, y_n \in\cX_n \\ x_n\neq y_n}} \\
&\qquad \qquad \qquad  \qquad \qquad \times\left|\sum_{x_1 \in \cX_1}\ep(x_1(x_2-y_2)\dots (x_n-y_n))\right|.
\end{align*}
\end{lemma}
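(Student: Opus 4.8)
The plan is to bound $|S(\cX_1,\dots,\cX_n;\omega_1,\dots,\omega_n)|^{2^{n-1}}$ by applying the Cauchy--Schwarz inequality $n-1$ times, successively ``peeling off'' the variables $x_n,x_{n-1},\dots,x_2$ in that order while keeping $x_1$ passive throughout. The controlling principle is that $x_1$ must never be duplicated, so at each step the outer (frozen) block of variables consists of \emph{all} variables currently present except the single one being removed; in particular $x_1$ always lies in the outer block. When the variable removed is $x_m$, we single out the weight $\omega_m$: since $\omega_m$ does not depend on $x_m$, it is a genuine function of the outer block and can serve as one of the two factors in the Cauchy--Schwarz inequality, after which $|\omega_m|\le1$ lets us absorb it into the cardinality of the outer block; the surviving weights each get duplicated.

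In detail, a short induction on the number of completed steps shows that after removing $x_n,\dots,x_{m+1}$ one is left with a quantity $Q_m$ equal to the absolute value of a sum over $x_1\in\cX_1,\dots,x_m\in\cX_m$ and over all pairs $x_i,y_i\in\cX_i$ with $x_i\neq y_i$ (for $m<i\le n$), of a weight $W$ against the phase $\ep\bigl(x_1\cdots x_m(x_{m+1}-y_{m+1})\cdots(x_n-y_n)\bigr)$, where each of $\omega_1,\dots,\omega_m$ occurs $2^{\,n-m}$ times in $W$ (with various arguments, some factors conjugated), so $|W|\le1$; here $Q_n=|S|$. To pass from $Q_m$ to $Q_{m-1}$: the copies of $\omega_m$ in $W$ do not involve $x_m$, so we pull them out of the $x_m$-summation, apply Cauchy--Schwarz over the outer block (whose size is at most $C_{(m)}:=(X_1\cdots X_{m-1})(X_{m+1}\cdots X_n)^2$), and expand the square, introducing a fresh $y_m\in\cX_m$; this turns the factor $x_m$ in the phase into $(x_m-y_m)$ and doubles the weights $\omega_1,\dots,\omega_{m-1}$. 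Separating off the diagonal contribution $x_m=y_m$, which is at most $C_{(m)}X_m$, yields the recursion
\[
Q_m^{2}\le C_{(m)}\bigl(C_{(m)}X_m+Q_{m-1}\bigr),\qquad m=n,n-1,\dots,2.
\]
The iteration terminates at $Q_1$: once $\omega_2$ has been removed, $W$ consists only of copies of $\omega_1$, and since $\omega_1$ does not depend on $x_1$ this final weight is independent of $x_1$; rearranging so that the $x_1$-sum sits inside and applying the triangle inequality together with $|W|\le1$ gives
\[
Q_1\le\sum_{\substack{x_2,y_2\in\cX_2\\x_2\neq y_2}}\cdots\sum_{\substack{x_n,y_n\in\cX_n\\x_n\neq y_n}}\Bigl|\sum_{x_1\in\cX_1}\ep\bigl(x_1(x_2-y_2)\cdots(x_n-y_n)\bigr)\Bigr|.
\]

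It then remains to unroll the recursion. Iterating $Q_m^2\le C_{(m)}(C_{(m)}X_m+Q_{m-1})$ from $Q_n=|S|$, and using $(a+b)^s\le 2^s(a^s+b^s)$ after each squaring, one obtains
\[
|S|^{2^{n-1}}\ll\sum_{m=2}^{n}\Bigl(\,\prod_{m'=m}^{n}C_{(m')}^{2^{m'-2}}\Bigr)\bigl(C_{(m)}X_m\bigr)^{2^{m-2}}\;+\;\Bigl(\,\prod_{m=2}^{n}C_{(m)}^{2^{m-2}}\Bigr)Q_1 .
\]
The main obstacle is now the telescoping evaluation of these products: substituting $C_{(m)}=(X_1\cdots X_{m-1})(X_{m+1}\cdots X_n)^2$ and summing the resulting geometric series in the exponents, one verifies
\[
\prod_{m=2}^{n}C_{(m)}^{2^{m-2}}=X_1^{2^{n-1}-1}(X_2\cdots X_n)^{2^{n-1}-2}
\]
and, for each $m\in\{2,\dots,n\}$,
\[
\Bigl(\,\prod_{m'=m}^{n}C_{(m')}^{2^{m'-2}}\Bigr)\bigl(C_{(m)}X_m\bigr)^{2^{m-2}}=(X_1\cdots X_n)^{2^{n-1}}X_m^{-2^{m-2}} .
\]
Feeding these two identities into the displayed bound, the sum over $m$ becomes $(X_1\cdots X_n)^{2^{n-1}}\bigl(1/X_n^{2^{n-2}}+\dots+1/X_2\bigr)$ and the last term becomes $X_1^{2^{n-1}-1}(X_2\cdots X_n)^{2^{n-1}-2}Q_1$, which together give exactly the asserted inequality. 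Apart from this bookkeeping, the only points needing attention are that each Cauchy--Schwarz step is legitimate — i.e.\ that the factored-out weight really is a function of the frozen block alone — and that the phase collapses as claimed under each squaring; both are straightforward.
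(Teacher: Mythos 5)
Your proof is correct and is essentially the paper's argument: both repeatedly apply Cauchy--Schwarz to duplicate $x_n,\dots,x_2$ in turn (never touching $x_1$), discard the bounded weights at each stage, and separate the diagonal contribution. The only difference is bookkeeping --- you unroll an explicit recursion $Q_m^2\le C_{(m)}\bigl(C_{(m)}X_m+Q_{m-1}\bigr)$ with the off-diagonal pair variables absorbed into the outer Cauchy--Schwarz block, whereas the paper organizes the same steps as an induction on $n$ with a H\"older inequality over the pairs $(x_n,y_n)$; the exponents and the final bound agree.
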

\begin{proof}
We proceed by induction on $n$ and first consider the case $n=2$. Our sums take the form
\begin{align*}
S(\cX_1,\cX_2,\omega_1,\omega_2)=\sum_{x_1\in \cX_1}\sum_{x_2\in \cX_2}\omega_1(x_2)\omega_2(x_1)e_p(x_1x_2),
\end{align*}
and hence by the Cauchy-Schwarz inequality 
\begin{align*}
\left|S(\cX_1,\cX_2,\omega_1,\omega_2)\right|^2\le X_1\sum_{x_1\in \cX_1}\left|\sum_{x_2\in \cX_2}e_p(x_1x_2) \right|^2.
\end{align*}
Expanding the square, interchanging summation and isolating the diagonal contribution, we get 
\begin{align*}
\left|S(\cX_1,\cX_2,\omega_1,\omega_2)\right|^2\le X_1^2X_2+X_1 \sum_{\substack{x_2,y_2\in \cX_2 \\ x_2\neq y_2}}\left|\sum_{x_1\in \cX_1}e_p(x_1(x_2-y_2)) \right|.
\end{align*}
Suppose the statement of Lemma~\ref{lem:SXin} is true for some integer $n-1\ge 2$ and consider the sums  $S(\cX_1, \dots, \cX_n; \omega_1, \dots, \omega_n)$. By the Cauchy-Schwarz inequality 
\begin{align*}
& \left|S(\cX_1, \dots, \cX_n; \omega_1, \dots, \omega_n)\right|^2\le X_1\dots X_{n-1} \\ &\sum_{\substack{x_i\in \cX_i \\ 1\le i \le n-1}}\left|\sum_{x_n\in \cX_n}\omega_1(\textbf{x})\dots \omega_{n-1}(\textbf{x})e_p(x_1\dots x_n) \right|^2,
\end{align*}
which after expanding the square, interchanging summation and isolating the diagonal contribution results in
\begin{align*}
\left|S(\cX_1, \dots, \cX_n; \omega_1, \dots, \omega_n)\right|^2 &\le \frac{(X_1\dots X_n)^2}{X_n} \\ &+X_1\dots X_{n-1}\sum_{\substack{x_n,y_n\in \cX_n \\ x_n\neq y_n}}S(x_n,y_n),
\end{align*}
where 
\begin{align*}
&S(x_n,y_n)= \\ & \left|\sum_{\substack{x_i\in \cX_i \\ 1\le i \le n-1}}\omega_1'(\mathbf{x}',x_n,y_n)\dots \omega_{n-1}'(\mathbf{x}',x_n,y_n)e_p(x_1\dots x_{n-1}(x_n-y_n)) \right|,
\end{align*}
and 
$$\textbf{x}'=(x_1,\dots,x_{n-1}), \quad \omega_j(\textbf{x}',x_n,y_n)=\omega_j(\textbf{x}',x_n)\overline \omega_j(\textbf{x}',y_n).$$
By H\"{o}lder's inequality
\begin{align*}
&\left|S(\cX_1, \dots, \cX_n; \omega_1, \dots, \omega_n)\right|^{2^{n-1}}\ll \frac{(X_1\dots X_n)^{2^{n-1}}}{X_n^{2^{n-2}}} \\ 
&\quad +(X_1\dots X_{n-1})^{2^{n-2}}X_n^{2^{n-1}-2}\sum_{\substack{x_n,y_n\in \cX_n \\ x_n\neq y_n}}S(x_n,y_n)^{2^{n-2}}.
\end{align*}
We next fix some pair $x_n\neq y_n$ and apply our induction hypothesis to the sum $S(x_n,y_n)$. This gives
\begin{align*}
& S(x_n,y_n)^{2^{n-2}}\le (X_1\dots X_{n-1})^{2^{n-2}}\left(\frac{1}{X_{n-1}^{2^{n-3}}}+\dots+\frac{1}{X_2} \right) \\ &+X_1^{2^{n-2}-1}(X_2\dots X_{n-1})^{2^{n-2}-2} \\ &\sum_{\substack{x_i,y_i\in \cX_i \\ x_i\neq y_i \\ 2\le i \le n-1}}\left|\sum_{x_1\in \cX_1}e_p(x_1(x_2-y_2)\dots(x_{n-1}-y_{n-1})(x_n-y_n)) \right|,
\end{align*}
which combined with the above implies
\begin{align*}
&\left|S(\cX_1, \dots, \cX_n; \omega_1, \dots, \omega_n)\right|^{2^{n-1}}\le (X_1\dots X_n)^{2^{n-1}}\left(\frac{1}{X_n^{2^{n-2}}}+\dots+\frac{1}{X_2} \right) \\&+X_1^{2^{n-1}-1}(X_2\dots X_n)^{2^{n-1}-2} \\ & \times \sum_{\substack{x_i,y_i\in \cX_i \\ x_i\neq y_i \\ 2\le i \le n-1}}\left|\sum_{x_1\in \cX_1}e_p(x_1(x_2-y_2)\dots(x_{n-1}-y_{n-1})(x_n-y_n)) \right|,
\end{align*}
and completes the proof.
\end{proof}

 We mention that the above proof is independent of the sizes of the $X_i$, and as such the lemma is left without such restrictions. 
 
For any set $\cA \subset \F_p$ we define
 \begin{align*}
 &D^\times_k(\cA) \\
 &= |\{(a_1-a_2)\dots(a_{2k-1}-a_{2k}) = (b_1-b_2)\dots(b_{2k-1}-b_{2k}) : a_i, b_i \in \cA\}|,
 \end{align*}
and extend the notation when variables run through different sets by defining
$D^\times_k(\cX_1, \dots, \cX_k)$ to be the number of solutions to
 \begin{align*} 
 (w_1-x_1)\dots(w_{k}-x_{k}) = (y_1-z_1)\dots(y_{k}-z_{k}),
\end{align*}
for $w_i, x_i, y_i, z_i \in \cX_i$. Finally, we use the notation $D_k^{\times, *}$ for the above cases where we exclude the solutions when the equation is $0$ and  define
\begin{align*}
\widetilde D_k^{\times, *}(\cX_1,\dots,\cX_k)=D_k^{\times, *}(\cX_1,\dots,\cX_k)-\frac{\left(\prod_{i=1}^kX_i(X_i-1) \right)^2}{p-1}.
\end{align*}
We note that $\widetilde D_k^{\times, *}$ is the error in approximation of $D_k^{\times, *}$ by the expected main term.
\begin{lemma} \label{lem:Dktimes*}
Let $\cX_1, \dots ,\cX_k \subset \F_p$. Then
\begin{align*}
D^{\times,*}_k(\cX_1, \dots, \cX_k) \le (D^{\times,*}_k(\cX_1)\dots  D^{\times,*}_k(\cX_k))^{1/k}.
\end{align*}
\end{lemma}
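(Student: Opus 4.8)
The plan is to write $D^{\times,*}_k(\cX_1,\dots,\cX_k)$ as an integral (or sum) of a product of one-dimensional counting functions and then apply Hölder's inequality. First I would fix, for each nonzero $t \in \F_p^*$, the quantity
\begin{align*}
r_i(t) = |\{(w,x) \in \cX_i \times \cX_i : w - x = t\}|,
\end{align*}
so that the number of representations of a fixed nonzero $\lambda$ as a product $(w_1-x_1)\dots(w_k-x_k)$ with $w_i,x_i \in \cX_i$ equals $\sum_{t_1 \cdots t_k = \lambda} r_1(t_1)\cdots r_k(t_k)$, where the sum runs over $t_i \in \F_p^*$. Squaring and summing over $\lambda \in \F_p^*$ gives
\begin{align*}
D^{\times,*}_k(\cX_1,\dots,\cX_k) = \sum_{\substack{s_1 t_1 = \dots = s_k t_k \\ s_i, t_i \in \F_p^*}} r_1(s_1) r_1(t_1) \cdots r_k(s_k) r_k(t_k),
\end{align*}
after a change of variables expressing the condition "two products over $\F_p^*$ are equal" as $s_i t_i$ all equal to a common value (equivalently $s_i/s_j = t_j/t_i$, etc.). The key structural observation is that this final expression is symmetric in the $k$ indices and, crucially, when $\cX_1 = \dots = \cX_k = \cX$ it is exactly $D^{\times,*}_k(\cX)$.

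The main step is then a Hölder-type inequality. I would view the multiple sum above as an average of the form $\sum_{\mathbf{u}} f_1(\mathbf{u}) f_2(\mathbf{u}) \cdots f_k(\mathbf{u})$ over an appropriate index set $\mathbf{u}$, where $f_i$ depends only on the data coming from $\cX_i$, and where replacing all the $f_i$ by any single one $f_j$ yields $D^{\times,*}_k(\cX_j)$. Concretely, parametrize solutions by a ratio vector: write $s_i = \rho_i s_k$ and $t_i = \rho_i^{-1} t_k$ for $i < k$ with $\rho_i \in \F_p^*$ (using $s_i t_i = s_k t_k$), so that
\begin{align*}
D^{\times,*}_k(\cX_1,\dots,\cX_k) = \sum_{\rho_1,\dots,\rho_{k-1} \in \F_p^*} \ \prod_{i=1}^{k} g_i(\rho_1,\dots,\rho_{k-1}),
\end{align*}
where $g_i(\bfrho) = \sum_{s,t \in \F_p^*,\ st = \text{(fixed)}} \dots$ — more precisely each $g_i$ is a nonnegative function on $(\F_p^*)^{k-1}$ built only from $r_i$, and $\sum_{\bfrho} g_i(\bfrho)^k = D^{\times,*}_k(\cX_i)$ by the same identity applied with all sets equal to $\cX_i$. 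Then Hölder's inequality with exponents $(k,k,\dots,k)$ gives
\begin{align*}
\sum_{\bfrho} \prod_{i=1}^k g_i(\bfrho) \le \prod_{i=1}^k \Bigl( \sum_{\bfrho} g_i(\bfrho)^k \Bigr)^{1/k} = \prod_{i=1}^k D^{\times,*}_k(\cX_i)^{1/k},
\end{align*}
which is exactly the claimed bound.

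The part requiring the most care is setting up the right symmetric parametrization so that each factor $g_i$ genuinely depends on only the $i$th set and so that the "diagonal" case $\cX_i = \cX$ for all $i$ reproduces $D^{\times,*}_k(\cX)$ on the nose — this is what licenses writing $\sum_{\bfrho} g_i^k = D^{\times,*}_k(\cX_i)$. One has to be a little careful that the exclusion of the zero product (the star) is respected uniformly, i.e. that all $s_i, t_i$ range over $\F_p^*$ rather than $\F_p$, which is precisely why the multiplicative reparametrization by ratios $\rho_i \in \F_p^*$ is available. Once the combinatorial identity and the matching of exponents are in place, the inequality itself is a one-line application of Hölder, and no arithmetic in $\F_p$ beyond counting differences is needed.
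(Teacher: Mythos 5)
Your overall plan (factor the count so that each factor depends on a single set, then apply H\"older with exponents $(k,\dots,k)$) is the right idea, but the execution has two genuine gaps. First, the constraint you write down is wrong: squaring $N(\lambda)=\sum_{t_1\cdots t_k=\lambda}r_1(t_1)\cdots r_k(t_k)$ and summing over $\lambda\in\F_p^*$ gives the single multiplicative relation $s_1\cdots s_k=t_1\cdots t_k$, not the system $s_1t_1=\cdots=s_kt_k$ (which is $k-1$ equations and counts a different, much smaller quantity); consequently the parametrization $s_i=\rho_i s_k$, $t_i=\rho_i^{-1}t_k$ does not reproduce $D^{\times,*}_k(\cX_1,\dots,\cX_k)$. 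Second, and more seriously, the step you yourself flag as ``requiring the most care'' is exactly the missing idea. With the correct constraint, setting $\rho_i=s_i/t_i$ and $g_i(\rho)=\sum_{t}r_i(\rho t)r_i(t)$ one gets
\begin{align*}
D^{\times,*}_k(\cX_1,\dots,\cX_k)=\sum_{\rho_1\cdots\rho_k=1}\ \prod_{i=1}^{k}g_i(\rho_i),\qquad
D^{\times,*}_k(\cX_i)=\sum_{\rho_1\cdots\rho_k=1}\ \prod_{j=1}^{k}g_i(\rho_j),
\end{align*}
and the constraint set $\{\rho_1\cdots\rho_k=1\}$ does not factor as a product, so a direct H\"older in these variables lands on $\sum_{\rho}g_i(\rho)^k$ multiplied by extraneous powers of $p-1$, not on $D^{\times,*}_k(\cX_i)$. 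There is no ``physical space'' parametrization of the kind you postulate.

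The way to make your H\"older step legitimate is precisely what the paper does: expand the constraint by orthogonality of multiplicative characters, which diagonalizes it and gives
\begin{align*}
D^{\times,*}_k(\cX_1,\dots,\cX_k)=\frac{1}{p-1}\sum_{\chi}\ \prod_{i=1}^{k}\left|\sum_{w,x\in\cX_i}\chi(w-x)\right|^{2},
\end{align*}
where each factor is nonnegative and $\chi(0)=0$ automatically enforces the exclusion of zero products. H\"older over $\chi$ with exponents $(k,\dots,k)$ then yields $\prod_i\bigl(\frac{1}{p-1}\sum_\chi|\sum_{w,x\in\cX_i}\chi(w-x)|^{2k}\bigr)^{1/k}=\prod_i D^{\times,*}_k(\cX_i)^{1/k}$ exactly. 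In other words, the nonnegativity of the multiplicative Fourier coefficients of the difference-representation function is the structural input that lets the H\"older step hit the diagonal quantities; without it (or some equivalent positivity), your sketch cannot be completed as written.
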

\begin{proof}
We let $K=D^{\times,*}_k(\cX_1, \dots, \cX_k)$ and express $K$ in terms of multiplicative characters
\begin{align*}
K&= \sum_{w_1,x_1,y_1,z_1 \in \cX_1} \dots \sum_{w_k,x_k,y_k,z_k \in \cX_k} \\
&\qquad \quad \frac{1}{p-1}\sum_{\chi \in \Omega} \chi(w_1-x_1)\dots(w_{k}-x_{k}) \overline{\chi}(y_1-z_1)\dots(y_{k}-z_{k})
\end{align*}
where $\Omega$ is the set of all distinct characters. Clearly,
\begin{align*}
K = \frac{1}{p-1}\sum_{\chi \in \Omega}\left| \sum_{w_1,x_1\in \cX_1} \chi(w_1-x_1)\right|^2 \dots \left|\sum_{w_k,x_k\in \cX_k} \chi(w_k-x_k)\right|^2.
\end{align*}
Using Holder's inequality, we obtain
\begin{align*}
K^k &\le \frac{1}{(p-1)^k} \sum_{\chi \in \Omega}\left| \sum_{w_1,x_1\in \cX_1} \chi(w_1-x_1)\right|^{2k} \dots \sum_{\chi \in \Omega}\left| \sum_{w_k,x_k\in \cX_k} \chi(w_k-x_k)\right|^{2k}\\
&= D^{\times,*}_k(\cX_1)\dots  D^{\times,*}_k(\cX_k).
\end{align*}
\end{proof}
The proof of the following is similar to that of Lemma~\ref{lem:Dktimes*} with summation only over non-principal characters.

\begin{lemma} \label{lem:tildeDktimes*}
Let $\cX_1, \dots ,\cX_k \subset \F_p$. Then
\begin{align*}
\widetilde D^{\times,*}_k(\cX_1, \dots, \cX_k) \le (\widetilde D^{\times,*}_k(\cX_1)\dots  \widetilde D^{\times,*}_k(\cX_k))^{1/k}.
\end{align*}
\end{lemma}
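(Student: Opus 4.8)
The plan is to imitate the proof of Lemma~\ref{lem:Dktimes*}, the only new ingredient being that we must keep track of the contribution of the principal character so that it cancels exactly against the main term subtracted in the definition of $\widetilde D_k^{\times,*}$. First I would expand $D_k^{\times,*}(\cX_1,\dots,\cX_k)$ using the orthogonality of the multiplicative characters of $\F_p^*$. Excluding the solutions with product equal to $0$ forces $w_i\ne x_i$ and $y_i\ne z_i$ for every $i$, so both sides of the defining equation lie in $\F_p^*$ and one obtains, exactly as in Lemma~\ref{lem:Dktimes*},
\[
D_k^{\times,*}(\cX_1,\dots,\cX_k)=\frac{1}{p-1}\sum_{\chi}\prod_{i=1}^k\Bigl|\sum_{\substack{w_i,x_i\in\cX_i\\ w_i\ne x_i}}\chi(w_i-x_i)\Bigr|^2 .
\]
The principal character $\chi_0$ contributes $\tfrac{1}{p-1}\prod_{i=1}^k(X_i(X_i-1))^2$, which is precisely the term subtracted in $\widetilde D_k^{\times,*}$, so
\[
\widetilde D_k^{\times,*}(\cX_1,\dots,\cX_k)=\frac{1}{p-1}\sum_{\chi\ne\chi_0}\prod_{i=1}^k\Bigl|\sum_{\substack{w_i,x_i\in\cX_i\\ w_i\ne x_i}}\chi(w_i-x_i)\Bigr|^2 .
\]

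Next I would apply H\"older's inequality to the sum over the non-principal characters, distributing the exponent $k$ over the $k$ factors, to get
\[
\widetilde D_k^{\times,*}(\cX_1,\dots,\cX_k)\le\frac{1}{p-1}\prod_{i=1}^k\Bigl(\sum_{\chi\ne\chi_0}\Bigl|\sum_{\substack{w_i,x_i\in\cX_i\\ w_i\ne x_i}}\chi(w_i-x_i)\Bigr|^{2k}\Bigr)^{1/k}.
\]
The same character computation applied to a single set gives $D_k^{\times,*}(\cX_i)=\tfrac{1}{p-1}\sum_{\chi}\bigl|\sum_{w\ne x}\chi(w-x)\bigr|^{2k}$, whose principal-character term is $\tfrac{1}{p-1}(X_i(X_i-1))^{2k}$; hence $\sum_{\chi\ne\chi_0}\bigl|\sum_{w\ne x}\chi(w-x)\bigr|^{2k}=(p-1)\,\widetilde D_k^{\times,*}(\cX_i)$. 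Substituting this and cancelling the powers of $p-1$ (a factor $(p-1)^{-1}(p-1)^{k\cdot\frac1k}=1$) yields $\widetilde D_k^{\times,*}(\cX_1,\dots,\cX_k)\le\bigl(\widetilde D_k^{\times,*}(\cX_1)\dots\widetilde D_k^{\times,*}(\cX_k)\bigr)^{1/k}$, as required.

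The only genuinely delicate step is the bookkeeping: one must check that dropping the principal character corresponds exactly to subtracting the stated main term, both for the mixed quantity and for each single-set quantity $\widetilde D_k^{\times,*}(\cX_i)$, and that the constraints $w_i\ne x_i$ match the exclusion of the zero solutions so that all characters are evaluated only at elements of $\F_p^*$. Once this is verified, the argument is the routine application of H\"older already used in the proof of Lemma~\ref{lem:Dktimes*}.
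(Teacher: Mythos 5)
Your proof is correct and is exactly the argument the paper intends: the paper states that Lemma~\ref{lem:tildeDktimes*} is proved like Lemma~\ref{lem:Dktimes*} but with summation restricted to non-principal characters, which is precisely your identification of the principal-character contribution with the subtracted main term followed by H\"older over the remaining characters. The bookkeeping you flag (matching the exclusion of zero solutions with evaluation of characters on $\F_p^*$, and the cancellation of the factors of $p-1$) is handled correctly.
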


Using Lemma~\ref{lem:SXin}, Lemma~\ref{lem:Dktimes*} and Lemma~\ref{lem:tildeDktimes*} we give two general results relating estimates for $S(\cX_1, \dots, \cX_n; \omega_1, \dots, \omega_n)$ to the quantities $D^\times_k(\cA)$ and $\widetilde D^\times_k(\cA)$. We first recall the classic Vinogradov bilinear estimate,  see \cite[Equation 1.4]{BouGar} or \cite[Lemma 4.1]{Gar}.
\begin{lemma}
\label{lem:bilin} 
For any sets $\cX, \cY \subseteq \F_p$ and any  $\alpha= (\alpha_{x})_{x\in \cX}$, $\beta = \( \beta_{y}\)_{y \in \cY}$ with 
\begin{align*}
\sum_{x\in \cX}|\alpha_{x}|^2 = A \mand  \sum_{y \in \cY}|\beta_{y}|^2 = B, 
\end{align*}
we have 
\begin{align*}
\left |\sum_{x \in \cX}\sum_{y \in \cY} \alpha_{x} \beta_{y}  \ep(xy) \right| \le \sqrt{pAB}.
\end{align*}
\end{lemma}
\begin{lemma}
\label{lem:SMV}
Let $n \ge 2$. Suppose $S(\cX_1, \dots, \cX_n; \omega_1, \dots, \omega_n)$ is defined as in \eqref{eq:S(X1XN)} and that 
$$X_1\ge X_2\dots \ge X_n.$$
Then
\begin{align*}
&|S(\cX_1, \dots, \cX_n; \omega_1, \dots, \omega_n)|^{2^{n}}\ll (X_1\dots X_n)^{2^{n}}\left(\frac{1}{X_1^{2^{n-1}}}+\dots+\frac{1}{X_{n-1}^2} \right) \\
& \qquad \qquad + pX_n^{2^{n}-1}(X_1 \dots X_{n-1})^{2^{n}-4}(D^{\times,*}_{n-1}(\cX_1)\dots  D^{\times,*}_{n-1}(\cX_{n-1}))^{1/(n-1)}.
\end{align*}
\end{lemma}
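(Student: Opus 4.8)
The plan is to combine the reduction from Lemma~\ref{lem:SXin} with the Vinogradov bilinear estimate of Lemma~\ref{lem:bilin} and then apply Cauchy--Schwarz to produce a $D^{\times,*}_{n-1}$ term, which Lemma~\ref{lem:Dktimes*} then splits into single-set quantities. Concretely, Lemma~\ref{lem:SXin} already gives
\begin{align*}
&|S(\cX_1, \dots, \cX_n; \omega_1, \dots, \omega_n)|^{2^{n-1}}\ll (X_1\dots X_n)^{2^{n-1}}\left(\frac{1}{X_n^{2^{n-2}}}+\dots+\frac{1}{X_2} \right) \\
& \qquad + X_1^{2^{n-1}-1}(X_2 \dots X_n)^{2^{n-1}-2} \sum_{\substack{x_i,y_i \in\cX_i \\ x_i\neq y_i \\ 2\le i\le n}}\left|\sum_{x_1 \in \cX_1}\ep(x_1(x_2-y_2)\dots (x_n-y_n))\right|,
\end{align*}
so the task is to bound the multilinear sum $W$ on the right. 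First I would insert unimodular weights $\gamma(x_2,y_2,\dots,x_n,y_n)$ to remove the absolute value bars and view $W$ as a bilinear form in the variable $x_1$ against the ``product'' variable $u = (x_2-y_2)\cdots(x_n-y_n)$; since $X_1$ is the largest set it is natural to keep $x_1$ as one side. Applying Lemma~\ref{lem:bilin} with $\cX=\cX_1$, $A=X_1$, and $\cY$ ranging over the (at most) all values of $u$ with multiplicity, with $B$ equal to the $\ell^2$-mass of the weight attached to $u$, gives
\begin{align*}
W \ll \sqrt{p\,X_1}\,\left(\sum_{\substack{x_i,y_i,x_i',y_i' \in \cX_i \\ x_i\ne y_i,\ x_i'\ne y_i' \\ 2\le i\le n}} \mathbf{1}\big[(x_2-y_2)\cdots(x_n-y_n)=(x_2'-y_2')\cdots(x_n'-y_n')\big]\right)^{1/2},
\end{align*}
and the sum under the square root is exactly $D^{\times,*}_{n-1}(\cX_2,\dots,\cX_n)$ (the $x_1\ne$ restriction forces nonzero products).

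Next I would feed this into the $2^{n-1}$-th power inequality above, raise the whole thing to another square (to reach the exponent $2^n$ in the statement) so that the $\sqrt{D^{\times,*}_{n-1}}$ becomes a clean $D^{\times,*}_{n-1}(\cX_2,\dots,\cX_n)$, and then apply Lemma~\ref{lem:Dktimes*} to get
\begin{align*}
D^{\times,*}_{n-1}(\cX_2,\dots,\cX_n)\le \big(D^{\times,*}_{n-1}(\cX_2)\cdots D^{\times,*}_{n-1}(\cX_n)\big)^{1/(n-1)}.
\end{align*}
Collecting the powers of $X_1,\dots,X_n$ and $p$ and comparing with the claimed exponents: the leading $(X_1\dots X_n)^{2^n}(X_{n-1}^{-2}+\dots)$ term comes from squaring the diagonal term of Lemma~\ref{lem:SXin}, while the second term should assemble into $p\,X_n^{2^n-1}(X_1\cdots X_{n-1})^{2^n-4}(\prod D^{\times,*}_{n-1}(\cX_i))^{1/(n-1)}$ after tracking the prefactors $X_1^{2^{n-1}-1}$, $(X_2\cdots X_n)^{2^{n-1}-2}$, the $\sqrt{pX_1}$, and the extra square. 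I should be a little careful here: the indexing in Lemma~\ref{lem:SMV} uses $\cX_1,\dots,\cX_{n-1}$ and $X_n$ as the ``bilinear'' variable, which is the reverse of the role $X_1$ plays in Lemma~\ref{lem:SXin}; so before applying Lemma~\ref{lem:SXin} I would relabel so that the largest set is the one kept outside and the smallest set $\cX_n$ plays the part of the bilinear side, or equivalently run the Cauchy--Schwarz in Lemma~\ref{lem:SXin}'s proof peeling off $\cX_n$ last — the statement as written has $X_1\ge\cdots\ge X_n$ and ends up with $X_n^{2^n-1}$ as the isolated prefactor, consistent with $\cX_n$ being the bilinear variable.

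The main obstacle I anticipate is the bookkeeping of exponents: making sure that applying Cauchy--Schwarz/Hölder the right number of times lands exactly on $2^n$ (not $2^{n-1}$ or $2^{n+1}$) and that the powers $2^n-1$, $2^n-4$, and $2^n-4$ appearing on $X_n$, the $X_i$ product, and implicitly inside the $D^{\times,*}$ normalization all match after the $1/(n-1)$ root is taken. A secondary subtlety is justifying that restricting to $x_i\ne y_i$ indeed makes every product $(x_2-y_2)\cdots(x_n-y_n)$ lie in $\F_p^*$, so that the character-sum machinery behind $D^{\times,*}_{n-1}$ and Lemma~\ref{lem:Dktimes*} applies without a separate treatment of zero products; this is immediate but worth a one-line remark. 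Everything else — inserting unimodular weights, the single application of Lemma~\ref{lem:bilin}, and the final invocation of Lemma~\ref{lem:Dktimes*} — is routine.
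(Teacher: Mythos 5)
Your proposal is correct and follows essentially the same route as the paper: apply Lemma~\ref{lem:SXin} with the variables permuted so that $\cX_n$ is the intact inner (bilinear) variable, insert unimodular weights, bound the resulting bilinear form via Lemma~\ref{lem:bilin} with $\ell^2$-mass $\sum_\lambda I(\lambda)^2 = D^{\times,*}_{n-1}(\cX_1,\dots,\cX_{n-1})$, square to reach exponent $2^n$, and finish with Lemma~\ref{lem:Dktimes*}. Your first displayed bound (keeping $x_1$ as the bilinear side) would give the indices reversed relative to the statement, but you correctly identify this and the relabeling you describe is exactly what the paper does.
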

\begin{proof}
Writing
$$S=\sum_{\substack{x_1,y_1 \in\cX_1 \\ x_1\neq y_1}}\dots \sum_{\substack{x_{n-1}, y_{n-1} \in\cX_{n-1} \\ x_{n-1}\neq y_{n-1}}}\left|\sum_{x_n \in \cX_n}\ep(x_n(x_1-y_1)\dots (x_{n-1}-y_{n-1}))\right|,$$
by Lemma~\ref{lem:SXin} it is sufficient to show that
\begin{align*}
&S^2\le    pX_n(D^{\times,*}_{n-1}(\cX_1)\dots  D^{\times,*}_{n-1}(\cX_{n-1}))^{1/(n-1)}.
\end{align*} 
Let $I(\lambda)$ count the number of solutions to the equation
$$\lambda=(x_1-y_1)\dots(x_{n-1}-y_{n-1}), \quad x_i,y_i\in \cX_i, \ \ x_i\neq y_i,$$
so that 
\begin{align*}
S=\sum_{\lambda}I(\lambda)\left|\sum_{x_n\in \cX_n}e_p(\lambda x_1) \right|,
\end{align*}
and hence by Lemma~\ref{lem:bilin}
\begin{align*}
S^2\le \left(\sum_{\lambda}I(\lambda)^2\right)pX_n,
\end{align*}
and the result follows from Lemma~\ref{lem:Dktimes*} since 
\begin{align*}
\sum_{\lambda}I(\lambda)^2=D^\times_{n-1}(\cX_1, \dots, \cX_{n-1}).
\end{align*}
\end{proof}
Our next estimate does better in applications over Lemma~\ref{lem:SMV} when our sets have $\cX_1,\dots,\cX_n$ have large cardinalities.
\begin{lemma} 
\label{lem:SMV1} 
Let $n \ge 2$. Suppose $S(\cX_1, \dots, \cX_n; \omega_1, \dots, \omega_n)$ is defined as in \eqref{eq:S(X1XN)}. Then we have 
\begin{align*}
&|S(\cX_1, \dots, \cX_n; \omega_1, \dots, \omega_n)|^{2^{n}}\ll (X_1\dots X_n)^{2^{n}}\left(\frac{1}{X_1^{2^{n-1}}}+\dots+\frac{1}{X_n} \right) \\
& \qquad \qquad +p^{1/2}(X_1\dots X_n)^{2^n-2}(\widetilde D^{\times,*}_n(\cX_1)\dots  \widetilde D^{\times,*}_n(\cX_n))^{1/2n}.
\end{align*}
\begin{proof}
Writing
$$S=\sum_{\substack{x_2,y_2 \in\cX_2 \\ x_2\neq y_2}}\dots \sum_{\substack{x_n, y_n \in\cX_n \\ x_n\neq y_n}}\left|\sum_{x_1 \in \cX_1}\ep(x_1(x_2-y_2)\dots (x_n-y_n))\right|,$$
by Lemma~\ref{lem:SXin} it is sufficient to show that
\begin{align*}
&S^2\le \frac{(X_1\dots X_n)^4}{X_1^2} \\ & \quad \quad \quad +(X_2\dots X_n)^2p^{1/2}(\widetilde D^{\times,*}_n(\cX_1)\dots  \widetilde D^{\times,*}_n(\cX_n))^{1/2n}.
\end{align*}
Applying the Cauchy-Schwarz inequality, interchanging summation and isolating the diagonal contribution gives
\begin{align}
\label{eq:SMVS2}
S^2\le X_1(X_2\dots X_n)^4+(X_2\dots X_n)^2\left|\sum_{\lambda=1}^{p-1}I(\lambda)e_p(\lambda)\right|,
\end{align}
where $I(\lambda)$ counts the number of solutions to the equation
\begin{align*}
(x_1-y_1)\dots(x_n-y_n)=\lambda, \quad x_i,y_i\in \cX_i, \ \ x_i\neq y_i.
\end{align*}
Let 
$$\Delta=\frac{X_1(X_1-1)\dots X_n(X_n-1)}{p-1},$$
and write
\begin{align*}
\sum_{\lambda=1}^{p-1}I(\lambda)e_p(\lambda)=\Delta\sum_{\lambda=1}^{p-1}e_p(\lambda)+\sum_{\lambda=1}^{p-1}(I(\lambda)-\Delta)e_p(\lambda).
\end{align*}
We have 
\begin{align}
\label{eq:SMVS2-1}
\left|\sum_{\lambda=1}^{p-1}I(\lambda)e_p(\lambda)\right|\ll \frac{(X_1\dots X_n)^2}{p}+\sum_{\lambda=1}^{p-1}|I(\lambda)-\Delta|.
\end{align}
With notation as in Lemma~\ref{lem:tildeDktimes*}, by the Cauchy-Schwarz inequality 
\begin{align*}
\sum_{\lambda=1}^{p-1}|I(\lambda)-\Delta|\le p^{1/2}\left(\sum_{\lambda=1}^{p-1}|I(\lambda)-\Delta|^2\right)^{1/2}=p^{1/2}\widetilde D^{\times,*}_n(\cX_1, \dots, \cX_n)^{1/2},
\end{align*}
and hence 
\begin{align*}
\sum_{\lambda=1}^{p-1}|I(\lambda)-\Delta|\le p^{1/2}(\widetilde D^{\times,*}_n(\cX_1)\dots  \widetilde D^{\times,*}_n(\cX_n))^{1/2n}.
\end{align*}
Combining the above with~\eqref{eq:SMVS2} and~\eqref{eq:SMVS2-1} gives
\begin{align*}
S^2&\le \frac{(X_1\dots X_n)^4}{X_1^3}+\frac{(X_1\dots X_n)^4}{p}\\
&\qquad \qquad +(X_2\dots X_n)^2p^{1/2}(\widetilde D^{\times,*}_n(\cX_1)\dots  \widetilde D^{\times,*}_n(\cX_n))^{1/2n} \\ 
&\ll \frac{(X_1\dots X_n)^4}{X_1^3}+(X_2\dots X_n)^2p^{1/2}(\widetilde D^{\times,*}_n(\cX_1)\dots  \widetilde D^{\times,*}_n(\cX_n))^{1/2n},
\end{align*}
and completes the proof.
\end{proof}
\end{lemma}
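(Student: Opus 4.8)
The plan is to bootstrap Lemma~\ref{lem:SMV1} off Lemma~\ref{lem:SXin}. Since $2^n = 2\cdot 2^{n-1}$, I would take the estimate of Lemma~\ref{lem:SXin} to the power $2$; expanding the square of its first (main) term via $(\sum_i a_i)^2\ll_n\sum_i a_i^2$, this reduces the whole statement to proving
\[
S^2\ll \frac{(X_1\cdots X_n)^4}{X_1^3}+(X_2\cdots X_n)^2 p^{1/2}\bigl(\widetilde D^{\times,*}_n(\cX_1)\cdots\widetilde D^{\times,*}_n(\cX_n)\bigr)^{1/2n},
\]
where $S$ is the difference-sum $\sum_{x_i\ne y_i,\,2\le i\le n}\bigl|\sum_{x_1\in\cX_1}\ep(x_1(x_2-y_2)\cdots(x_n-y_n))\bigr|$ occurring in Lemma~\ref{lem:SXin}. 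After that the derivation is bookkeeping: one multiplies by the factors $X_1^{2^n-2}(X_2\cdots X_n)^{2^n-4}$ coming from the squared Lemma~\ref{lem:SXin} and collects terms, using $X_1<p$ (automatic as $\cX_1\subseteq\F_p^*$) to absorb the leftover $(X_1\cdots X_n)^4/p$. (The precise distribution of the reciprocal powers of $X_1,\dots,X_n$ in the first error term is just what feeding a suitable ordering of the sets into Lemma~\ref{lem:SXin} produces, which is unconstrained since that lemma carries no size hypothesis.)

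To prove the bound for $S^2$ I would apply Cauchy--Schwarz in the $2(n-1)$ difference variables $x_2,y_2,\dots,x_n,y_n$, open the inner square $\bigl|\sum_{x_1}\ep(x_1\mu)\bigr|^2=\sum_{x_1,y_1\in\cX_1}\ep((x_1-y_1)\mu)$ with $\mu=(x_2-y_2)\cdots(x_n-y_n)$, and peel off the diagonal $x_1=y_1$, whose total contribution to $S^2$ is $X_1(X_2\cdots X_n)^4$. Since each $x_i-y_i$ is nonzero, the remaining off-diagonal piece equals $\sum_{\lambda=1}^{p-1}I(\lambda)\ep(\lambda)$, where $I(\lambda)$ counts solutions of $(x_1-y_1)\cdots(x_n-y_n)=\lambda$ with $x_i,y_i\in\cX_i$ and $x_i\ne y_i$.

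The crux is to remove the main term from $\sum_{\lambda=1}^{p-1}I(\lambda)\ep(\lambda)$ \emph{before} any further Cauchy--Schwarz. Writing $I(\lambda)=\Delta+(I(\lambda)-\Delta)$ with $\Delta=\prod_i X_i(X_i-1)/(p-1)$, the contribution $\Delta\sum_{\lambda=1}^{p-1}\ep(\lambda)=-\Delta$ is negligible, and Cauchy--Schwarz over the fewer than $p$ admissible values of $\lambda$ gives $\sum_\lambda|I(\lambda)-\Delta|\le p^{1/2}\bigl(\sum_\lambda(I(\lambda)-\Delta)^2\bigr)^{1/2}$. Expanding the square and using $\sum_\lambda I(\lambda)=\prod_i X_i(X_i-1)$, the cross term and the $(p-1)\Delta^2$ term combine so that exactly $\sum_\lambda(I(\lambda)-\Delta)^2=\widetilde D^{\times,*}_n(\cX_1,\dots,\cX_n)$ survives; getting this identity right — i.e.\ checking that the centred count $\widetilde D^{\times,*}_n$, rather than the raw $D^{\times,*}_n$ of Lemma~\ref{lem:SMV}, is what appears — is the step I expect to matter most, since it is the whole source of the gain for large sets. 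I would then finish by invoking Lemma~\ref{lem:tildeDktimes*} to replace $\widetilde D^{\times,*}_n(\cX_1,\dots,\cX_n)^{1/2}$ by $\bigl(\widetilde D^{\times,*}_n(\cX_1)\cdots\widetilde D^{\times,*}_n(\cX_n)\bigr)^{1/2n}$ and substituting back into the chain above.
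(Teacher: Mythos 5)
Your proposal is correct and follows essentially the same route as the paper: reduce to the differenced sum $S$ via Lemma~\ref{lem:SXin}, apply Cauchy--Schwarz and peel off the diagonal, centre $I(\lambda)$ at $\Delta$ before a Cauchy--Schwarz over $\lambda$ so that $\sum_{\lambda}(I(\lambda)-\Delta)^2=\widetilde D^{\times,*}_n(\cX_1,\dots,\cX_n)$, and finish with Lemma~\ref{lem:tildeDktimes*}. Your explicit verification of that variance identity and of the absorption of the $\Delta$-term (where the prefactor $(X_2\cdots X_n)^2$ makes $X_1\le p$ sufficient) is, if anything, slightly more careful than the paper's write-up.
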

\subsection{Estimates for $D^{\times}_k(\cA)$}
In this section we give estimates for $D^{\times}_k(\cA)$ which will be combined with results from Section~\ref{sec:mv} to obtain estimates for multilinear sums. We first recall the following result \cite[Theorem 32]{Shkr3}.
\begin{lemma} \label{lem:DtimesE+}
Suppose $\cA \subset \F_p$  is a set and $|\cA|=A$. For all $k\ge 2$
\begin{align*}
D^\times_k(\cA) - \frac{A^{4k}}{p} \ll_k (\log A)^4 A^{4k-2-2^{-k+2}}E^+(\cA)^{1/2^{k-1}}.
\end{align*}
\end{lemma}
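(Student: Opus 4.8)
The plan is to set up a self-correlation for $D^\times_k(\cA)$ that reduces the $k$-fold product structure to the single-difference case, and then feed in the known sum-product / energy estimates of Shkredov for the quantity $E^+(\cA)$. Writing $N = D^\times_k(\cA)$, I would first express $N$ via multiplicative characters exactly as in the proof of Lemma~\ref{lem:Dktimes*}, but now keeping track of the principal character separately: for the solutions on which the product $(a_1-a_2)\cdots(a_{2k-1}-a_{2k})$ is nonzero we have
\begin{align*}
D^{\times,*}_k(\cA) = \frac{1}{p-1}\sum_{\chi}\left|\sum_{a,b\in\cA}\chi(a-b)\right|^{2k},
\end{align*}
and the contribution of $\chi=\chi_0$ is $\frac{1}{p-1}(A(A-1))^{2k}$, which is the expected main term and matches $A^{4k}/p$ up to lower-order terms. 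Also $D^\times_k(\cA) - D^{\times,*}_k(\cA)$ counts tuples making the product vanish, which is $O(A^{4k-1})$ and hence absorbed in the claimed error once we check $A^{4k-1}$ is dominated by $(\log A)^4 A^{4k-2-2^{-k+2}}E^+(\cA)^{1/2^{k-1}}$ (true since $E^+(\cA)\gg A^2$ always, giving the RHS at least $A^{4k-2-2^{-k+2}}\cdot A^{2^{-k+2}} = A^{4k-2}$... one has to be a little careful here, but the diagonal/degenerate terms are genuinely of smaller order).

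The heart of the matter is therefore to bound $\widetilde D^{\times,*}_k(\cA) = \frac{1}{p-1}\sum_{\chi\neq\chi_0}|\sum_{a,b}\chi(a-b)|^{2k}$. Here I would iterate: by Hölder (or by a dyadic/level-set decomposition on the character sums $\sigma(\chi) := \sum_{a,b\in\cA}\chi(a-b)$), one reduces $\sum_{\chi\neq\chi_0}|\sigma(\chi)|^{2k}$ to a controlled power of $\sum_{\chi\neq\chi_0}|\sigma(\chi)|^{4}$, which is essentially a multiplicative-energy quantity of the difference set $\cA-\cA$, i.e. it counts solutions to $(a_1-b_1)(a_2-b_2)=(a_3-b_3)(a_4-b_4)$ — precisely $D^{\times,*}_2(\cA)$ minus its main term. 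The recursion $D^{\times,*}_k \rightsquigarrow D^{\times,*}_{k-1}$ (or the telescoping Hölder exponents $2^{-k+2}$ and $1/2^{k-1}$ appearing in the statement are the fingerprint of exactly such a $k$-step dyadic iteration) then terminates at $D^{\times,*}_2(\cA)$, and the base case $D^\times_2(\cA)$ is bounded in terms of $E^+(\cA)$ — this is the content of the point–plane incidence machinery (Rudnev/Shkredov), where the multiplicative energy of a difference set is estimated by the additive energy $E^+(\cA)$ with a $(\log A)^4$-type loss. So concretely: handle the degenerate terms; pass to $\widetilde D^{\times,*}_k$; run the dyadic Hölder iteration down to $\widetilde D^{\times,*}_2$; invoke the known bound $\widetilde D^{\times,*}_2(\cA)\ll (\log A)^{c} A^{2}E^+(\cA)$ (Shkredov); bookkeep the exponents, checking they collapse to $A^{4k-2-2^{-k+2}}E^+(\cA)^{1/2^{k-1}}$.

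The main obstacle I expect is the bookkeeping of the iteration and, relatedly, pinning down the correct base-case estimate for $D^{\times,*}_2(\cA)$ in terms of $E^+(\cA)$ with the right power of $\log A$. The dyadic decomposition on $|\sigma(\chi)|$ introduces at each of the roughly $k$ stages a factor $\log A$ (from the number of dyadic levels) together with a loss in the Hölder step; making sure these combine to give exactly $(\log A)^4$ rather than $(\log A)^{O(k)}$ requires either being more clever than crude dyadic pigeonholing or, more likely, that the $(\log A)^4$ already comes packaged in the cited $D^{\times,*}_2$ bound and the iteration itself is log-free (true if one uses Hölder directly on $\sum_\chi|\sigma(\chi)|^{2k}$ rather than dyadic levels). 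I would structure the write-up so that all the logarithmic loss is quarantined into the single invocation of the Rudnev-type base estimate, and the $k$-fold reduction is a clean sequence of Hölder inequalities with no further loss; then the exponent arithmetic is the only thing left to verify.
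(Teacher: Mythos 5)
The paper never proves this lemma itself (it is quoted verbatim from Shkredov \cite[Theorem~32]{Shkr3}), but the mechanism behind it is exactly the one displayed in the paper's Lemma~\ref{lem:Dktimes} and the recursion \eqref{eq:Dkk-1}: one shows, for each $k\ge 3$,
\begin{align*}
D^\times_k(\cA)-\frac{A^{4k}}{p}\ \ll_k\ (\log A)^2\,A^{2k+1}\left(D^\times_{k-1}(\cA)-\frac{A^{4(k-1)}}{p}\right)^{1/2},
\end{align*}
each such step being a fresh application of Fourier expansion plus Rudnev's point--plane incidence bound (Lemma~\ref{lem:Rud}), and then iterates down to the $k=2$ case $D^\times_2(\cA)-A^8/p\ll(\log A)^4A^{5}E^+(\cA)^{1/2}$. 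The square root taken at \emph{every} level is what generates the exponents $E^+(\cA)^{1/2^{k-1}}$ and the saving $2^{-k+2}$ in the power of $A$; the incidence theorem is invoked $k-1$ times, not once at the base. Your central step --- collapsing $\sum_{\chi\neq\chi_0}|\sigma(\chi)|^{2k}$ (with $\sigma(\chi)=\sum_{a,b\in\cA}\chi(a-b)$) to the fourth moment by H\"older or level sets --- cannot produce this shape: interpolation through $L^4$ and $L^\infty$ gives at best $\|\sigma\|_\infty^{2k-4}\,\widetilde D^{\times,*}_2(\cA)\ll (\log A)^4 A^{4k-3}E^+(\cA)^{1/2}$, which carries $E^+(\cA)^{1/2}$ instead of $E^+(\cA)^{1/2^{k-1}}$ and, e.g.\ when $E^+(\cA)\asymp A^3$, loses a factor $A^{1/2-2^{-k+1}}$ against the statement for every $k\ge 3$. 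So ``one H\"older collapse plus one incidence input at the base'' is structurally the wrong architecture, not a bookkeeping issue.

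There are also problems with the ingredients you plan to feed in. The base bound you propose to invoke, $\widetilde D^{\times,*}_2(\cA)\ll(\log A)^{c}A^{2}E^+(\cA)$, is false: for $\cA=\{1,\dots,A\}$ with $A\approx p^{1/3}$ one has $E^+(\cA)\asymp A^3$, so that right-hand side is about $A^5$, whereas lifting the $\gg A^2$ diagonal solutions of $d_1d_2=d_3d_4$, $1\le d_i\le A/2$, through the $\gg A$ representations of each $d_i$ as a difference gives $D^{\times,*}_2(\cA)\gg A^6\gg A^8/p$. The correct base case is the $k=2$ instance of the statement itself, i.e.\ $\ll(\log A)^4A^5E^+(\cA)^{1/2}$ (compare the paper's Lemma~\ref{lem:Dtimes}, which for $A\gg p^{1/2}$ is of exactly this strength). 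Two further slips: the degenerate solutions (both products zero) number $O_k(A^{4k-2})$, not $O(A^{4k-1})$ --- with $A^{4k-1}$ the absorption would actually fail, since the right-hand side of the lemma can be as small as $A^{4k-2}$ when $E^+(\cA)\asymp A^2$; and the logarithms are not ``quarantined'' in the base estimate: each recursion step contributes $(\log A)^2$, and the exponent $4$ persists only because it is the fixed point of $c\mapsto 2+c/2$ under the iterated square root.
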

We then have the following lemma \cite[Theorem 41]{Shkr3}.
\begin{lemma} \label{lem:Dtimessmall}
Let $\cA \subset \F_p$ be a set, $A \le p^{2846/4991}$. Then for any $c < \frac{1}{434}$ one has 
\begin{align*}
D_2^\times(\cA) \ll A^{13/2-c}.
\end{align*}
Furthermore, if $A \le p^{48/97}$, then for any $c_1 < \frac{1}{192}$ one has
\begin{align*}
D_2^\times(\cA) \ll A^{13/2-c_1}.
\end{align*}
\end{lemma}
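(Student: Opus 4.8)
The plan is to deduce both parts from Lemma~\ref{lem:DtimesE+} combined with sharp bounds for the additive energy $E^+(\cA)$ of a small set, the latter being the genuine quantitative input and ultimately a consequence of Rudnev's point-plane incidence bound \cite{Rud}.

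First I would specialise Lemma~\ref{lem:DtimesE+} to $k=2$, which gives
$$D_2^\times(\cA) \ll \frac{A^8}{p} + (\log A)^4 A^5 E^+(\cA)^{1/2}.$$
Under the hypothesis $A \le p^{2846/4991}$ (and a fortiori under $A \le p^{48/97}$) we have $p \ge A^{4991/2846}$, so $A^8/p \le A^{8 - 4991/2846} < A^{13/2 - c}$ for every admissible value of $c$; hence the first term is harmless and the whole content lies in estimating $(\log A)^4 A^5 E^+(\cA)^{1/2}$.

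The key step is then to insert the best available bounds for $E^+(\cA)$ in the two indicated ranges: in the range $A \le p^{2846/4991}$ one uses $E^+(\cA) \ll A^{3 - 1/217 + o(1)}$, while in the smaller range $A \le p^{48/97}$ one has the stronger $E^+(\cA) \ll A^{3 - 1/96 + o(1)}$. These are exactly the small-set additive-energy estimates produced by Rudnev's incidence theorem below (respectively, well below) $p^{1/2}$, and the cardinality thresholds $p^{2846/4991}$ and $p^{48/97}$ are precisely the points where the optimisation of the incidence bound over $\F_p$ changes form. Substituting, and recalling $5 + \tfrac12 = \tfrac{13}{2}$ and $\tfrac12\cdot\tfrac1{217} = \tfrac1{434}$, $\tfrac12\cdot\tfrac1{96} = \tfrac1{192}$, we obtain $D_2^\times(\cA) \ll A^{13/2 - 1/434 + o(1)}$ in the first range and $D_2^\times(\cA) \ll A^{13/2 - 1/192 + o(1)}$ in the second. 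Since the inequalities $c < \tfrac1{434}$ and $c_1 < \tfrac1{192}$ are strict, the factor $(\log A)^4$ and the $p^{o(1)}$ loss are absorbed, yielding the claimed bounds.

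The main obstacle is the middle step: establishing $E^+(\cA) \ll A^{3-\delta}$ with the explicit exponents $\delta = \tfrac1{217}$ and $\delta = \tfrac1{96}$ together with the precise cardinality thresholds. This rests on expressing $E^+(\cA)$ (or an appropriate higher additive moment of $\cA$) as an incidence count between points and planes in $\F_p^3$, applying Rudnev's bound, and then tracking the $p$-dependent error term carefully enough to identify exactly the ranges in which it is dominated; it is this bookkeeping that produces the exponents $2846/4991$ and $48/97$. Everything else — verifying that $A^8/p$ is negligible in the stated ranges and that the strict inequalities on $c$ and $c_1$ swallow the logarithmic and $p^{o(1)}$ factors — is routine.
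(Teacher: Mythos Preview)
The paper does not give its own proof of this statement; it is quoted directly as \cite[Theorem~41]{Shkr3}. Your proposed derivation, however, contains a genuine gap.

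The reduction via Lemma~\ref{lem:DtimesE+} at $k=2$ is correct and yields
\[
D_2^\times(\cA)\ \ll\ \frac{A^8}{p}+(\log A)^4 A^{5}\,E^{+}(\cA)^{1/2}.
\]
The error is the next step: the uniform bounds $E^{+}(\cA)\ll A^{3-1/217+o(1)}$ (for $A\le p^{2846/4991}$) and $E^{+}(\cA)\ll A^{3-1/96+o(1)}$ (for $A\le p^{48/97}$) that you invoke simply do not hold for arbitrary sets. An arithmetic progression $\cA$ of length $A$ has $E^{+}(\cA)\asymp A^{3}$, and such a progression can certainly satisfy $A\le p^{48/97}$. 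For this $\cA$ your argument gives only $D_2^\times(\cA)\ll (\log A)^4 A^{13/2}$, missing the saving of $A^{-c}$ entirely. The lemma is nevertheless true for an AP, since in that case $D_2^\times(\cA)$ is of order roughly $A^{6}$; it is the route through $E^{+}$ alone that is too crude. (Incidentally, $2846/4991>1/2$, so the first range is not ``below $p^{1/2}$'' as you assert.)

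The underlying point is that $D_2^\times(\cA)$ carries a genuine sum--product tension: the factors $a_i-a_j$ are additive while the outer operation is multiplicative, and no set is structured in both senses at once. A bound depending only on $E^{+}(\cA)$ sees just the additive side and is therefore tight at $A^{13/2}$ up to logarithms. Shkredov's proof in \cite[Theorem~41]{Shkr3} exploits this tension through more refined quantities (higher energies and the associated spectral/operator machinery built on Rudnev's bound) rather than through a universal estimate on $E^{+}(\cA)$, which, as the AP example shows, cannot exist.
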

We first notice that from the proof of \cite[Theorem 32]{Shkr3} we have 
\begin{align} \label{eq:Dkk-1}
D^\times_k(\cA)-\frac{A^{4k}}{p} \ll_k (\log A)^2 A^{2k+1}\left(D^\times_{k-1}(A) -\frac{A^{4(k-1)}}{p}\right)^{1/2}.
\end{align}
Using $E^+(\cA) \le A^3$, combined with Lemma \ref{lem:Dtimessmall} and \eqref{eq:Dkk-1} we have the following corollary.
\begin{cor}\label{cor:Dktimes}
Suppose $\cA \subset \F_p$  is a set and $|\cA|=A$. For all $k\ge 2$
\begin{align*}
D^\times_k(\cA) - \frac{A^{4k}}{p} \ll_k (\log A)^4 A^{4k-2+2^{-k+1}}.
\end{align*}
Similarly if $A\le p^{2846/4991}$, for any $c<\frac{1}{434}$ we have
\begin{align*}
D^\times_k(\cA) - \frac{A^{4k}}{p} \ll_k (\log A )^4 A^{4k-2+2^{-k+1}-c2^{-k+2}}
\end{align*}
and if $A \le p^{48/97}$, for any $c_1 < \frac{1}{192}$ we have
\begin{align*}
D^\times_k(\cA) - \frac{A^{4k}}{p} \ll_k (\log A )^4 A^{4k-2+2^{-k+1}-c_12^{-k+2}}.
\end{align*}
\end{cor}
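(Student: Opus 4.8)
The plan is to prove all three estimates by induction on $k$, using the recursion \eqref{eq:Dkk-1} as the inductive step and supplying the base case $k=2$ from the known bounds recalled above. For the first (unconditional) estimate, the base case is handled by Lemma~\ref{lem:DtimesE+} with $k=2$ together with the trivial bound $E^{+}(\cA)\le A^{3}$: this gives $D^{\times}_{2}(\cA)-A^{8}/p\ll(\log A)^{4}A^{6-1+3/2}=(\log A)^{4}A^{6+1/2}$, which is exactly the claimed exponent $4k-2+2^{-k+1}$ at $k=2$ (since $4\cdot 2-2+2^{-1}=6+\tfrac12$). For the conditional estimates one instead takes Lemma~\ref{lem:Dtimessmall} as the base case: when $A\le p^{2846/4991}$ one has $D^{\times}_{2}(\cA)\ll A^{13/2-c}$ for any $c<1/434$, and $13/2-c = 4k-2+2^{-k+1}-c2^{-k+2}$ at $k=2$; similarly $A\le p^{48/97}$ gives the exponent with $c_{1}<1/192$.

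For the inductive step, suppose the desired bound holds for $k-1$, i.e.\ $D^{\times}_{k-1}(\cA)-A^{4(k-1)}/p\ll_{k}(\log A)^{4}A^{4(k-1)-2+\theta_{k-1}}$ where $\theta_{k-1}$ is the relevant exponent shift ($2^{-k+2}$ in the unconditional case, or $2^{-k+2}-c2^{-k+3}$ in the conditional cases). Plugging this into \eqref{eq:Dkk-1} yields
\begin{align*}
D^{\times}_{k}(\cA)-\frac{A^{4k}}{p}
&\ll_{k}(\log A)^{2}A^{2k+1}\Bigl((\log A)^{4}A^{4(k-1)-2+\theta_{k-1}}\Bigr)^{1/2}\\
&= (\log A)^{4}\,A^{2k+1}\,A^{2(k-1)-1+\theta_{k-1}/2}
= (\log A)^{4}\,A^{4k-2+\theta_{k-1}/2}.
\end{align*}
So it remains only to check that halving the exponent shift at each stage reproduces the stated formulas: in the unconditional case $\theta_{k}=\theta_{k-1}/2$ with $\theta_{2}=2^{-1}$ gives $\theta_{k}=2^{-k+1}$, matching $4k-2+2^{-k+1}$. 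In the conditional cases, starting from $\theta_{2}=2^{-1}-c2^{0}=2^{-1}-c$ and halving repeatedly gives $\theta_{k}=2^{-k+1}-c2^{-k+2}$, which is exactly the exponent in the statement; the argument for $c_{1}$ is identical. Note the condition $A\le p^{2846/4991}$ (resp.\ $A\le p^{48/97}$) is inherited unchanged through the induction since it is only needed to invoke the base case.

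The only genuinely delicate point is making sure \eqref{eq:Dkk-1} is applicable at each level of the induction and that the implied constants, which depend on $k$, remain under control: since the recursion is applied a bounded number of times (exactly $k-2$ times) and each application multiplies the constant by a factor depending only on the current index, the final constant depends only on $k$, as claimed. Everything else is the bookkeeping of the geometric halving of the exponent shift, and the power of $\log A$ stabilises at $(\log A)^{4}$ because $(\log A)^{2}\cdot((\log A)^{4})^{1/2}=(\log A)^{4}$. I would present the unconditional case in full and remark that the two conditional cases follow verbatim, only changing the base-case input and carrying the extra $-c2^{-k+2}$ (resp.\ $-c_{1}2^{-k+2}$) term through the same halving.
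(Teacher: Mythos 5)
Your proposal is correct and matches the paper's (very brief) derivation: the corollary is exactly the iteration of \eqref{eq:Dkk-1} with base cases $D_2^\times$ from Lemma~\ref{lem:DtimesE+} (with $E^+(\cA)\le A^3$) and Lemma~\ref{lem:Dtimessmall}, and your bookkeeping of the halving exponent shift and the stabilising $(\log A)^4$ factor is accurate. The only cosmetic difference is that for the unconditional bound the paper applies Lemma~\ref{lem:DtimesE+} with $E^+(\cA)\le A^3$ directly for every $k$ rather than only at $k=2$, which gives the same exponent.
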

It is clear that we can use the above to give other estimates on $D^\times_k$ using previous estimates on $D^\times_2$. We recall the following result \cite[Lemma 2.6]{Mac1}, which is given from Murphy et. al \cite{MPR-NRS} result on collinear triples.
\begin{lemma}
Let $\cA \subset \F_p$. Then
\begin{align*}
D^\times_2(\cA)- \frac{A^8}{p} \ll p^{1/2}A^{11/2}.
\end{align*}
\end{lemma}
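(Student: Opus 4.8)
The plan is to discard the obviously degenerate solutions and then read off the rest as a point--plane incidence count governed by Rudnev's theorem~\cite{Rud}, which also underlies the collinear triples estimates of Murphy et al.~\cite{MPR-NRS}.

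First I would dispose of the solutions of $(a_1-a_2)(a_3-a_4)=(b_1-b_2)(b_3-b_4)$ in which one of the four differences vanishes: then one side is $0$, forcing one of the two differences on the other side to vanish as well, which pins two of the eight variables and leaves six free, so these solutions number $O(A^6)$. Since $A=|\cA|\le p$ this is $\ll p^{1/2}A^{11/2}$, so it remains to bound the number $D_2^{\times,*}(\cA)$ of solutions with all four differences nonzero. Putting $\lambda=(a_1-a_2)/(b_1-b_2)$ gives
\[
D_2^{\times,*}(\cA)=\sum_{\lambda\in\F_p^{*}}N(\lambda)^2,\qquad N(\lambda)=\bigl|\{(a,a',b,b')\in\cA^4: a-a'=\lambda(b-b')\ne0\}\bigr|,
\]
the factor coming from $(a_3,a_4,b_3,b_4)$ being $N(\lambda)$ again after swapping the two groups of variables. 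Here $N(\lambda)$ counts ordered pairs of distinct points of the grid $\cA\times\cA$ on a common line of slope $\lambda$, and $\sum_\lambda N(\lambda)=A^2(A-1)^2\ll A^4$ over $p-1$ slopes, which by Cauchy--Schwarz already isolates $A^8/p$ as the expected main term.

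To bound $\sum_\lambda N(\lambda)^2$ I would set up a point--plane incidence. Fixing $(a_3,a_4,b_2,b_3,b_4)$ with $a_3\ne a_4$, the equation is affine-linear in $(a_1,a_2,b_1)$ and defines a plane of $\F_p^3$ depending only on the ratio $(b_3-b_4)/(a_3-a_4)$ and on $b_2$; hence $D_2^{\times,*}(\cA)$ is the number of incidences between the point set $\cA^3\subseteq\F_p^3$ and a family of planes in which the plane of slope $\lambda$ and shift $b_2\in\cA$ carries multiplicity $N(\lambda)$. Splitting the slopes dyadically by $N(\lambda)\sim\tau$, with $L_\tau=\{\lambda:N(\lambda)\sim\tau\}$, Rudnev's bound for $\cA^3$ (of size $A^3$, with at most $A$ points on any line) and the $\asymp A|L_\tau|$ planes of multiplicity $\sim\tau$ contributes, up to constants,
\[
\tau\Bigl(\tfrac{A^{3}\cdot A|L_\tau|}{p}+A^{3/2}\cdot A|L_\tau|+A\cdot A|L_\tau|\Bigr),
\]
and summing over $\tau$ using $\tau|L_\tau|\le\sum_\lambda N(\lambda)\ll A^4$ gives $\ll A^8/p$ from the first term and $\ll A^{13/2}\le p^{1/2}A^{11/2}$ from the others. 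Together with the degenerate count this proves the lemma.

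I expect the main obstacle to be twofold. First, Rudnev's theorem needs the point set to have size $O(p^2)$, i.e.\ $A\lesssim p^{2/3}$, so the range $A\gtrsim p^{2/3}$ — where $A^8/p$ already exceeds $p^{1/2}A^{11/2}$ — must be handled separately, by a direct Fourier/Weil equidistribution argument for the map $(x,y,z,t)\mapsto(x-y)(z-t)$, which is available because $\cA$ is then dense. Second is the bookkeeping in the incidence step: one must confirm that the collinear-points parameter for $\cA^3$ is genuinely $\le A$, track the multiplicities $N(\lambda)$ correctly through the dyadic sum, and verify that the $|\cP||\Pi|/p$ term of Rudnev's bound reproduces exactly $A^8/p$ with no spurious factor.
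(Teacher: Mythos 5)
Your reduction to $D_2^{\times,*}(\cA)=\sum_{\lambda\neq 0}N(\lambda)^2$ and the point--plane model are fine, but the quantitative part of the plan does not give the stated bound, for three concrete reasons. First, the middle term of Rudnev's inequality already ruins the range $A>p^{1/2}$: each dyadic class contributes about $\tau\, |\cP|^{1/2}|\Pi_\tau| \approx \tau A^{5/2}|L_\tau|$, and summing with $\sum_\tau \tau|L_\tau|\le A^4$ gives $\approx A^{13/2}$; your step ``$A^{13/2}\le p^{1/2}A^{11/2}$'' is valid only when $A\le p^{1/2}$, while the lemma is asserted for all $\cA\subseteq \F_p$. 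Second, the main term cannot be recovered ``with no spurious factor'' from a dyadically decomposed \emph{weighted} incidence count: rounding $N(\lambda)$ up to the dyadic level multiplies the $|\cP||\Pi|/p$ contribution by a constant at least $2$, and an excess of even $c\,A^8/p$ exceeds $p^{1/2}A^{11/2}$ as soon as $A>p^{3/5}$ --- this is a structural loss of the scheme, not a bookkeeping item to verify; your fallback for dense sets is only sketched, and the obvious completion/fourth-moment character-sum bound there gives errors of order $pA^2E_{+}(\cA)\le pA^5$, far above $p^{1/2}A^{11/2}$ (also note the paper's Lemma~\ref{lem:Rud} has no $|\cP|=O(p^2)$ hypothesis, so $p^{2/3}$ is not the relevant threshold). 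Third, Lemma~\ref{lem:Rud} requires $|\cP|\le|\Pi|$, which fails for the rich-slope classes with $|L_\tau|<A^2$ (and $N(\lambda)$ can genuinely be of order $A^3$, e.g.\ for an arithmetic progression); dualizing swaps the middle term to $|\Pi_\tau|^{1/2}|\cP|$, whose dyadic sum can reach $\approx \tau_{\max}^{1/2}A^{11/2}\approx A^7$, and your written third term $A\cdot A|L_\tau|$ is of the form $k|\Pi|$ rather than the $k|\cP|$ appearing in Lemma~\ref{lem:Rud}, so the case analysis as described does not close.

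For comparison, the paper does not prove this lemma: it is quoted from \cite[Lemma~2.6]{Mac1}, which rests on the collinear-triples estimate of \cite{MPR-NRS}, whose argument is arranged precisely to avoid the losses above (no dyadic loss on the main term, no $A^{13/2}$ term, no logarithms). The closest argument actually carried out in the paper is the proof of Lemma~\ref{lem:Dtimes}: there the main term $A^8/p$ is peeled off exactly by Fourier orthogonality \emph{before} any incidence input, and Rudnev is applied to a symmetric pair of dyadic level sets, always placing the smaller one on the point side, which is how the $|\cP|\le|\Pi|$ issue is sidestepped; even so, that route only yields $D_2^{\times}(\cA)-A^8/p\ll\left(A^6+p^{1/2}A^{4}E_{+}(\cA)^{1/2}+pA^4\right)\log^2 A$, i.e.\ the present lemma up to logarithmic factors and an extra $pA^4$ term. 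If you want a self-contained proof of the clean bound $p^{1/2}A^{11/2}$, you should follow \cite{MPR-NRS} rather than the scheme you describe.
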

Again, we have the following corollary.
\begin{cor} \label{cor:Dktimes2}
Let $\cA \subset \F_p$. Then
\begin{align*}
D^\times_k(\cA) - \frac{A^{4k}}{p} \ll_k p^{2^{1-k}}(\log A)^4 A^{4k-2-2^{-k+1}}.
\end{align*}
\end{cor}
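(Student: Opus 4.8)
The plan is to bootstrap from the base case $D^\times_2$ using the recursive inequality \eqref{eq:Dkk-1}, exactly as in the proof of Corollary~\ref{cor:Dktimes}, but now feeding in the stronger large-$\cA$ estimate $D^\times_2(\cA)-A^8/p\ll p^{1/2}A^{11/2}$ from the cited result of Murphy et al.\ instead of the trivial bound $E^+(\cA)\le A^3$. So first I would set up the induction on $k\ge 2$: the claim for $k=2$ reads $D^\times_2(\cA)-A^8/p\ll p^{-1}(\log A)^4 A^{6-1/2}$, i.e.\ $p^{-1}A^{11/2}$ up to logs --- which is \emph{false} as stated, so one has to be slightly careful that the exponent $2^{1-k}$ is meant to make the $k=2$ case read $p^{1/2}A^{11/2}$; indeed $2^{1-2}=1/2$ and $4k-2-2^{-k+1}=8-2-1/2=11/2$, and the log power is harmless since the base case has no log. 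Good --- so the base case is precisely the quoted lemma.

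Next, for the inductive step I would assume
\[
D^\times_{k-1}(\cA)-\frac{A^{4(k-1)}}{p}\ll_k p^{2^{2-k}}(\log A)^4 A^{4(k-1)-2-2^{-k+2}}
\]
and substitute into \eqref{eq:Dkk-1}. Taking the square root of the right-hand side gives a factor $p^{2^{1-k}}(\log A)^2 A^{(4k-6-2^{-k+2})/2}=p^{2^{1-k}}(\log A)^2 A^{2k-3-2^{-k+1}}$, and multiplying by the prefactor $(\log A)^2 A^{2k+1}$ from \eqref{eq:Dkk-1} yields $p^{2^{1-k}}(\log A)^4 A^{4k-2-2^{-k+1}}$, which is exactly the desired bound. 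So the computation is entirely mechanical: match the power of $p$ ($2\cdot 2^{-k}=2^{1-k}$ after halving $2^{2-k}$), match the power of $A$, and observe the log powers combine to $(\log A)^4$ at every stage (the base case contributing no log, the step contributing $(\log A)^2\cdot(\log A)^2$ after the square root kills the $(\log A)^4$ down to $(\log A)^2$ --- here I should double-check that $\sqrt{(\log A)^4}=(\log A)^2$ so the total is $(\log A)^{2}\cdot(\log A)^{2}=(\log A)^4$, consistent for all $k\ge 3$; for $k=2$ we get exactly $(\log A)^2\cdot(\log A)^0$ from the prefactor times the log-free base case, which is absorbed).

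There is essentially no main obstacle here --- the lemma is a routine corollary, identical in form to Corollary~\ref{cor:Dktimes} but with a different input at the bottom of the recursion. The only point requiring any attention is bookkeeping of the constants depending on $k$ (each application of \eqref{eq:Dkk-1} introduces a $k$-dependent implied constant, and after $k-2$ iterations these accumulate, but remain finite for each fixed $k$, hence the $\ll_k$), and verifying the exponent arithmetic $4k-2-2^{-k+1}$ and $p$-exponent $2^{1-k}$ propagate correctly. I would therefore present the proof as: "By \eqref{eq:Dkk-1} and induction, using the base case of the preceding lemma; the exponents are verified by a direct computation as in Corollary~\ref{cor:Dktimes}," and include the one-line exponent check displayed above.
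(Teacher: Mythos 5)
Your proof is correct and is essentially the paper's own (implicit) argument: the corollary is obtained exactly as you describe, by taking the Murphy et al.\ bound $D_2^{\times}(\cA)-A^{8}/p\ll p^{1/2}A^{11/2}$ as the base case and iterating the recursion \eqref{eq:Dkk-1}, and your exponent and logarithm bookkeeping ($p$-exponent halving to $2^{1-k}$, $A$-exponent $4k-2-2^{-k+1}$, logs stabilising at $(\log A)^4$) checks out. The only blemish is your momentary misreading of the $k=2$ case as $p^{-1}A^{11/2}$, which you correct yourself before running the induction.
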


We next prepare to give an estimate for $D^{\times}_k(\cA)$ which improves on the above results for sets of cardinality a little larger than $p^{1/2}$. As in Shkredov~\cite{Shkr3}, our main tool is Rudnev's point plane incidence bound~\cite{Rud}.
\begin{lemma}
\label{lem:Rud}
Let $p$ be an odd prime, $\cP\subset \F_p^3$ a set of points and $\Pi$ a collection of planes in $\F_p^3$. Suppose $|\cP|\le |\Pi|$ and that $k$ is the maximum number of collinear points in $\cP$. Then the number of point-planes incidences satisfies
$$\cI(\cP,\Pi)\le \frac{|\cP||\Pi|}{p}+|\cP|^{1/2}|\Pi|+k|\cP|.$$
\end{lemma}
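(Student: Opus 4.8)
This is the point--plane incidence bound of Rudnev \cite{Rud}, and the plan is to follow the argument of that paper, whose engine is the Pl\"ucker (Klein) parametrisation of lines in three--dimensional projective space.

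First I would pass from $\F_p^3$ to $\mathbb{P}^3(\F_p)$ by projectivising each plane and adjoining the plane at infinity; this changes $\cI(\cP,\Pi)$ by $O(|\cP|+|\Pi|)$, which is absorbed into the terms $|\cP|^{1/2}|\Pi|$ and $k|\cP|$. In the projective picture a plane becomes a point of the dual space and ``$P\in\pi$'' becomes the vanishing of a symmetric bilinear pairing, so the configuration is genuinely symmetric in points and planes. After a dyadic decomposition according to the size of $|\cP\cap\pi|$, Cauchy--Schwarz gives $\cI(\cP,\Pi)^2\le |\Pi|\sum_{\pi\in\Pi}|\cP\cap\pi|^2$, so it suffices to control the second moment, that is, the number of triples $(P,Q,\pi)$ with $P,Q\in\cP\cap\pi$.

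Separating the diagonal $P=Q$, which contributes $\cI(\cP,\Pi)$, from $P\ne Q$, and using that two distinct points span a unique line $\ell$ and that the planes of $\Pi$ through both points are exactly the planes of $\Pi$ containing $\ell$, reduces the second moment to $\cI(\cP,\Pi)+\sum_{\ell}n(\ell)(n(\ell)-1)m(\ell)$, where $n(\ell)=|\cP\cap\ell|$ and $m(\ell)$ is the number of planes of $\Pi$ containing $\ell$. Here the collinearity hypothesis gives $n(\ell)\le k$, but a crude bound using only this --- or the clean identity $\sum_{\ell}m(\ell)^2\ll|\Pi|^2+|\Pi|p^2$ together with Cauchy--Schwarz --- is lossy, because it pairs the few lines rich in $\cP$ against all lines of $\mathbb{P}^3$. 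The remedy, and the heart of Rudnev's argument, is to show that there are few lines that are simultaneously rich in $\cP$ and contained in many planes of $\Pi$: under the Pl\"ucker embedding the lines of $\mathbb{P}^3$ become the points of a smooth four--dimensional quadric $\mathcal{G}\subset\mathbb{P}^5$, and ``$\ell\subset\pi$'' becomes a linear condition on the Pl\"ucker point of $\ell$, so the lines contained in at least $m$ planes of $\Pi$ correspond to points of $\mathcal{G}$ lying on the intersection of at least $m$ prescribed hyperplanes, a linear section of $\mathcal{G}$ which is correspondingly sparse. Feeding this back, together with the trivial bound $n(\ell)\le k$ on the genuinely degenerate lines, the expected contribution $|\cP||\Pi|/p$ from a typical linear section of the quadric, and a second--moment step producing the term $|\cP|^{1/2}|\Pi|$, gives the stated estimate.

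I expect the main obstacle to be precisely this estimate for the number of lines that are rich in $\cP$ and lie in many planes of $\Pi$: over $\R$ this is the Guth--Katz theorem on lines in space, and the substance of Rudnev's contribution is that the Klein quadric reformulation replaces polynomial partitioning by an elementary point--hyperplane count valid over $\F_p$. The rest is careful bookkeeping of the degenerate configurations --- lines lying in a common plane, pencils of planes through a common line, and their interaction with the parameter $k$ --- so that every contribution lands inside the three terms $|\cP||\Pi|/p$, $|\cP|^{1/2}|\Pi|$ and $k|\cP|$.
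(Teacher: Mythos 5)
First, for orientation: the paper does not prove this lemma at all --- it is quoted directly from Rudnev \cite{Rud} (``our main tool is Rudnev's point plane incidence bound''), so the only ``proof'' in the paper is the citation, and your attempt has to be measured against Rudnev's own argument, which you say you intend to follow.

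As a reconstruction of that argument your sketch has a genuine gap at exactly the decisive point. After your Cauchy--Schwarz step the whole problem is to bound $\sum_{\ell} n(\ell)(n(\ell)-1)m(\ell)$, i.e.\ to control lines that are simultaneously rich in $\cP$ and contained in many planes of $\Pi$; you dispose of this by saying that under the Pl\"ucker embedding such lines correspond to points of the Klein quadric lying in many of the prescribed linear sections, ``which is correspondingly sparse''. That is not a proof: the planes of $\Pi$ correspond to $2$-planes ($\beta$-planes) inside the quadric, and counting points of the quadric covered by many of these $2$-planes is itself an incidence problem of essentially the same strength as the lemma, so no elementary sparsity count closes the loop. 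Rudnev's actual proof has a different shape: points and planes are sent to the two families of planes ($\alpha$- and $\beta$-planes) in the Klein quadric, an incidence corresponds to such a pair meeting in a line, and a generic hyperplane section converts the problem into counting pairs of intersecting lines between two families of lines in three-space; the decisive input is then the Guth--Katz theorem on the number of pairwise intersections of $N$ lines with no overly rich plane or regulus, whose validity over $\F_p$ rests on nontrivial algebraic geometry of ruled surfaces (Cayley--Salmon/Monge, Koll\'ar), and whose hypothesis $N\ll p^2$ is precisely the source of the term $|\cP||\Pi|/p$ and of the role of the condition $|\cP|\le|\Pi|$. So your closing claim that the Klein quadric reformulation ``replaces polynomial partitioning by an elementary point--hyperplane count'' misplaces the difficulty: what is avoided is partitioning, but what replaces it is the line-geometry/ruled-surface theorem, which your proposal neither states nor proves. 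The surrounding bookkeeping (diagonal terms, $n(\ell)\le k$, absorbing the affine-to-projective passage) is fine, but without the line--line intersection bound the argument does not close, and the $p$-dependent main term in particular is never actually produced.
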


\begin{lemma}
\label{lem:Dtimes}
For a prime number $p$ and a subset $\cA\subseteq \F_p$ with $|\cA|=A$ we have
\begin{align*}
D_2^{\times}(\cA)&=\frac{A^8}{p}+O\left(A^6(\log{A})^2+p^{1/2}A^4E_{+}(\cA)^{1/2}(\log{A})^2\right) \\ & \quad \quad \quad+O\left(pA^4(\log{A})^2\right).
\end{align*}
\end{lemma}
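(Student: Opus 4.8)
The plan is to follow Shkredov's strategy~\cite{Shkr3}: express $D_2^{\times}(\cA)$ (after removing degenerate solutions) as a point--plane incidence count, estimate it with Rudnev's bound (Lemma~\ref{lem:Rud}), and read off the error term, but arranging the argument so that the additive energy $E_+(\cA)$ survives in the final estimate instead of being bounded trivially at the outset.

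Write $D_2^{\times}(\cA)=\sum_{\lambda\in\F_p}n(\lambda)^2$ with $n(\lambda)=|\{(a_1,a_2,a_3,a_4)\in\cA^4:(a_1-a_2)(a_3-a_4)=\lambda\}|$. The diagonal $\lambda=0$ contributes $n(0)^2\ll A^6$, which is absorbed into the claimed error, so it remains to estimate $\sum_{\lambda\neq 0}n(\lambda)^2$. Since $n(\lambda)=\sum_{s\neq 0}r(s)r(\lambda/s)$ for $\lambda\neq 0$, where $r$ is the representation function of $\cA-\cA$, each $n(\lambda)$ is already a count of incidences between the lifted point set $\{(a_2,a_4,a_2a_4):a_2,a_4\in\cA\}\subset\F_p^3$ and the planes $\{Z-a_3X-a_1Y=\lambda-a_1a_3\}$ indexed by $(a_1,a_3)\in\cA^2$ (obtained by rearranging $(a_1-a_2)(a_3-a_4)=\lambda$); equivalently, after one further manipulation, $\sum_{\lambda\neq 0}n(\lambda)^2$ may be read as the number of pairs of parallel non-degenerate line segments in the Cartesian grid $\cA\times\cA$, whose closed form involves the line-multiplicities $m_\ell=|\ell\cap(\cA\times\cA)|$ and differs from $\sum_{\mu}\widetilde Q(\mu)^2$, $\widetilde Q(\mu)=\sum_{\ell:\,\mathrm{slope}\,\mu}m_\ell^2$, by only $O(A^6+pA^4)$ — the $pA^4$ discrepancy coming from the universal lower bound $\widetilde Q(\mu)\ge A^2$.

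To use Lemma~\ref{lem:Rud} one must handle the non-uniform multiplicities, and here I would pigeonhole dyadically: split into classes according to the size of $n(\lambda)$ (equivalently, the richness $m_\ell$ of the lines), obtaining $O((\log A)^2)$ classes, on each of which the relevant incidence count has planes of essentially uniform weight and can be fed into Rudnev's bound (after checking the hypothesis $|\cP|\le|\Pi|$ and disposing of the sparse classes trivially). Summing the contributions, the leading term $|\cP||\Pi|/p$ reassembles, via $\sum_\lambda n(\lambda)\asymp A^4$, to the main term $A^8/p$; the secondary term $|\cP|^{1/2}|\Pi|$ reassembles to $O(A^6(\log A)^2)$; and the collinear term $k|\cP|$ gives the remaining error.

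The main obstacle — and the place where $E_+(\cA)$ enters — is the collinear term. The key inequality is $\widetilde Q(\mu)=E(\cA,-\mu\cdot\cA)\le E_+(\cA)$, a consequence of Cauchy--Schwarz and the dilation-invariance of additive energy; it bounds every line's richness by $E_+(\cA)^{1/2}$, so the maximal number of collinear points in the configuration is $\le\min\{A,E_+(\cA)^{1/2}\}$, and a slightly finer count of the rich lines (using in addition $\sum_\ell m_\ell(m_\ell-1)\ll A^4$) turns the collinear contribution, after the dyadic summation, into the terms $p^{1/2}A^4E_+(\cA)^{1/2}(\log A)^2$ and $pA^4(\log A)^2$. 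The remaining care is bookkeeping: verifying that the $Q\to\widetilde Q$ passage and the diagonal together cost only $O(A^6+pA^4)$, that $|\cP|\le|\Pi|$ holds on the relevant dyadic classes (and sparse classes are negligible), and that the dyadic sum of the $|\cP|^{1/2}|\Pi|$ terms does not exceed $O(A^6(\log A)^2)$, e.g.\ via $\sum_i 2^i|L_i|\ll A^4$. Combining all of this, together with the $O(A^6)$ from $\lambda=0$, yields the asserted identity for $D_2^{\times}(\cA)$.
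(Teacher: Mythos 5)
Your overall skeleton (remove degenerate solutions, set up a point--plane incidence count, decompose dyadically, apply Lemma~\ref{lem:Rud}) matches the paper, but the mechanism you propose for the crucial term $p^{1/2}A^4E_{+}(\cA)^{1/2}(\log A)^2$ does not work. Your ``key inequality'' $m_\ell\le E_{+}(\cA)^{1/2}$ is vacuous: since $E_{+}(\cA)\ge A^2$ for every set (diagonal solutions), one has $E_{+}(\cA)^{1/2}\ge A$, while any line meets the grid $\cA\times\cA$ in at most $A$ points, so $\min\{A,E_{+}(\cA)^{1/2}\}=A$ identically and the collinearity term of Rudnev's bound can never see $E_{+}(\cA)$. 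In fact in a purely physical-space configuration there is no natural source of the factor $p^{1/2}$ attached to $E_{+}(\cA)^{1/2}$ at all. Moreover, the incidence problem that computes the second moment $\sum_{\lambda\ne0}n(\lambda)^2$ is never actually specified: the explicit points $(a_2,a_4,a_2a_4)$ and planes you write down compute a single $n(\lambda)$, and the passage ``after one further manipulation'' plus ``a slightly finer count of the rich lines'' is exactly where the whole difficulty lies. If one carries out the most natural version of your plan --- dyadic decomposition in the representation function $r$ of the difference multiset $\cA-\cA$, points $(a_1s,s,a_5)$, planes $x_1-a_2x_2-ux_3+a_6u=0$ for $s,u$ in dyadic classes --- then the middle term $|\cP|^{1/2}|\Pi|$ of Lemma~\ref{lem:Rud} sums to about $A^5E_{+}(\cA)^{1/2}$ (up to logarithms), not $A^6$, and this is strictly weaker than the stated bound precisely when $A\ge p^{1/2}$, which is the regime in which the lemma is later used (Corollaries~\ref{cor:Dk1} and~\ref{cor:Dksharp}).

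The paper's proof gets the $E_{+}$-dependence from the Fourier side instead: it writes $D_2^{\times}(\cA)$ using the indicator $I$ of the multiset $\cA-\cA$, whose Fourier coefficients are $\widehat I(y)=\bigl|\sum_{a\in\cA}e_p(ay)\bigr|^2$, decomposes dyadically in the size of $\widehat I$, and applies Lemma~\ref{lem:Rud} to the six-variable equation $(a_1-a_2)y=(a_3-a_4)z$ with $y,z$ in dyadic classes $J(i),J(j)$. There $E_{+}(\cA)$ enters through Parseval for the fourth moment, $\sum_y\widehat I(y)^2=pE_{+}(\cA)$, via the $|\cP|^{1/2}|\Pi|$ term (giving $p^{1/2}A^4E_{+}(\cA)^{1/2}(\log A)^2$), while the collinearity and $A^2|J(i)||J(j)|$ terms combined with the $L^1$ bound $\sum_y\widehat I(y)=pA$ give $pA^4(\log A)^2$, and the main incidence term gives $A^6(\log A)^2$. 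So your assignment of Rudnev's three terms to the three error terms is the wrong way around, and without the Fourier-side decomposition the claimed estimate is not reached. To repair your argument you would need to replace the richness/collinearity mechanism by a second-moment input of this Parseval type.
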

\begin{proof}
We have 
\begin{align*}
D_2^{\times}(\cA)=\sum_{\substack{a_i\in \cA \\ (a_1-a_2)(a_3-a_4)=(a_5-a_6)(a_7-a_8) \\ a_5\neq a_6 }}1+O(A^6).
\end{align*}
Let $I(x)$ denote the indicator function of the multiset $$\{ a-a' \ : \ a,a'\in \cA\},$$  and let $\widehat I$ denote the Fourier transform of $I$. We note that the Fourier coefficients satisfy
\begin{align}
\label{eq:Ihat}
\widehat I(x)=\left|\sum_{a\in \cA}e_p(ax) \right|^2.
\end{align}
We have 
\begin{align}
\label{eq:DW}
\nonumber D_2^{\times}(A)&=\sum_{\substack{a_i\in \cA  \\ a_5\neq a_6 }}I\left(\frac{(a_1-a_2)(a_3-a_4)}{(a_5-a_6)} \right)+O(A^6) \\
&=\frac{A^8}{p}+O(A^6)+W,
\end{align}
where 
\begin{align*}
W=\frac{1}{p}\sum_{y=1}^{p-1}\widehat I(y)\sum_{\substack{a_i \in \cA \\ a_5\neq a_6}}e_p(-y(a_1-a_2)(a_3-a_4)(a_5-a_6)^{-1}).
\end{align*}
We have 
\begin{align*}
W&\le \frac{1}{p}\sum_{y=1}^{p-1}\sum_{z=1}^{p}\widehat I(y) \widehat I(z)\sum_{\substack{a_i \in \cA \\ (a_1-a_2)y=(a_3-a_4)z \\ a_3\neq a_4}}1 \\
&=\frac{A^5}{p}\sum_{y=1}^{p-1}\widehat I(y)+\frac{1}{p}\sum_{y=1}^{p-1}\sum_{z=1}^{p-1}\widehat I(y) \widehat I(z)\sum_{\substack{a_i \in \cA \\ (a_1-a_2)y=(a_3-a_4)z }}1, 
\end{align*}
where we have removed the condition $a_3\neq a_4$ in the last display since by~\eqref{eq:Ihat} the Fourier coefficients are nonnegative. The above implies 
\begin{align}
\label{eq:WW0}
W\le W_0+O(A^6),
\end{align}
where
\begin{align*}
W_0=\frac{1}{p}\sum_{y=1}^{p-1}\sum_{z=1}^{p-1}\widehat I(y) \widehat I(z)\sum_{\substack{a_i \in \cA \\ (a_1-a_2)y=(a_3-a_4)z}}1.
\end{align*}
For integer $i\ge 1$ we define the sets
\begin{align}
\label{eq:Jidef}
J(i)=\{ 1\le z \le p \ : \ 2^{i-1}-1\le \widehat I(z)< 2^{i}-1 \},
\end{align}
so that 
\begin{align}
\label{eq:WWj}
W_0\ll \frac{1}{p}\sum_{1\le i,j \ll \log{A}}2^{i+j}W(i,j),
\end{align}
where 
\begin{align*}
W(i,j)=\sum_{\substack{a_i \in \cA, y\in J(i), z\in J(j) \\  (a_1-a_2)y=(a_3-a_4)z}}1.
\end{align*}
Fix some pair $(i,j)$ and consider $W(i,j)$. If 
$|J(i)|\le |J(j)|,$
then we consider the set of points 
$$\cP=\{ (a_1y,y,a_3) \ : \ y\in J(i), \  a_1,a_3\in \cA \},$$
and the collection of planes  
$$\Pi = \{ x_1-a_2x_2-zx_3+a_4z=0 \ : \  z\in J(j), \ \ a_2,a_4\in \cA \}.$$
We see that $W(i,j)$ is bounded by the number of point-plane incidences between $\cP$ and $\Pi$
\begin{align*}
W(i,j)\le \cI(\cP,\Pi).
\end{align*}
Since the maximum number of collinear points in $\cP$ is $\max\{ A,|J(i)|\}$ an  application of Lemma~\ref{lem:Rud} gives
\begin{align}
\begin{split}
\label{eq:Wij1}
W(i,j)\ll \frac{A^4|J(i)||J(j)|}{p}&+A^3|J(i)|^{1/2}|J(j)|\\
&\qquad \qquad +A^2|J(i)|\max\{ A,|J(i)|\}.
\end{split}
\end{align}
In a similar fashion, if $|J(j)|\le |J(i)|$ then
\begin{align}
\begin{split}
\label{eq:Wij2}
W(i,j)\ll \frac{A^4|J(i)||J(j)|}{p}&+A^3|J(j)|^{1/2}|J(i)|\\
& \qquad \qquad +A^2|J(j)|\max\{ A,|J(j)|\}.
\end{split}
\end{align}
 This implies that
\begin{align*}
W(i,j) &\ll \frac{A^4|J(i)||J(j)|}{p}+A^3|J(i)|^{1/2}|J(j)|+A^3|J(j)|^{1/2}|J(i)| \\ & \quad \quad +A^2\min\{|J(i)|^2,|J(j)|^2\} \\ & \ll \frac{A^4|J(i)||J(j)|}{p}+A^3|J(i)|^{1/2}|J(j)|+A^3|J(j)|^{1/2}|J(i)| \\ & \quad \quad +A^2|J(i)||J(j)|,
\end{align*}
and hence substituting the above into~\eqref{eq:WWj} we get 
\begin{align*}
W_0 &\ll \frac{A^4}{p^2}\left(\sum_{1\le i \ll \log{A}}2^{i}|J(i)| \right)^2 \\  
& \qquad +\frac{A^3}{p}\left(\sum_{1\le i \ll \log{A}}2^i|J(i)|^{1/2} \right)\left(\sum_{1\le i \ll \log{A}}2^{i}|J(i)| \right)\\
&\qquad \qquad +\frac{A^2}{p}\left(\sum_{1\le i \ll \log{A}}2^{i}|J(i)| \right)^2.
\end{align*}
Recalling~\eqref{eq:Ihat} and~\eqref{eq:Jidef}, we have 
\begin{align*}
\sum_{1\le i \ll \log{A}}2^{i}|J(i)|&\ll p+\sum_{2\le i \ll \log{A}}2^{i}|J(i)|  \\ & \ll p+\log{A}\sum_{y=1}^{p}|\sum_{a\in \cA}e_p(ya)|^2=pA\log{A},
\end{align*}
and 
\begin{align*}
\left(\sum_{1\le i \ll \log{A}}2^i|J(i)|^{1/2} \right)^2&\ll p+\log{A}\sum_{2\le i \ll \log{A}}2^{2i}|J(i)|\\
&\ll p+(\log{A})^2\sum_{y=1}^{p}\left|\sum_{a\in \cA}e_p(ya) \right|^4,
\end{align*}
so that 
\begin{align*}
\sum_{1\le i \ll \log{A}}2^i|J(i)|^{1/2}\ll p^{1/2}E_{+}(\cA)^{1/2}\log{A}.
\end{align*}
This implies
\begin{align*}
W\ll A^6(\log{A})^2+p^{1/2}A^4E_{+}(\cA)^{1/2}(\log{A})^2+pA^4,
\end{align*}
and hence by~\eqref{eq:DW} and~\eqref{eq:WW0}
\begin{align*}
D_2^{\times}(A)&=\frac{A^8}{p}+O\left(A^6(\log{A})^2\right)+O\left(p^{1/2}A^4E_{+}(\cA)^{1/2}(\log{A})^2\right) \\ &+O(pA^4(\log{A})^2),
\end{align*}
which completes the proof.
\end{proof}
We next establish a recurrence type inequality similar to~\cite[Theorem~32]{Shkr3}. 
\begin{lemma}
\label{lem:Dktimes}
For a prime number $p$ and a subset $\cA\subseteq \F_p$ with $|\cA|=A$ 
we have
\begin{align*}
D_k^{\times}(\cA)=\frac{A^{4k}}{p}+O_k\left(\left(A^{4k-2}+pA^{4k-4}+p^{1/2}A^{2k}D^{\times}_{k-1}(\cA)^{1/2}\right)\log^2{A}\right).
\end{align*}
\end{lemma}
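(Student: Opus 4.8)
The idea is to mirror the proof of Lemma~\ref{lem:Dtimes}, but with one of the two difference factors in the product $(a_1-a_2)(a_3-a_4)$ replaced by a product of $k-1$ differences, so that the combinatorial ``divisor'' $D^{\times}_{k-1}(\cA)$ appears. First I would write, as in the $k=2$ case,
\begin{align*}
D_k^{\times}(\cA)=\sum_{\substack{a_i,b_j\in \cA\\ (a_1-a_2)\dots(a_{2k-1}-a_{2k})=(b_1-b_2)\dots(b_{2k-1}-b_{2k})\\ b_{2k-1}\neq b_{2k}}}1+O_k(A^{4k-2}),
\end{align*}
the error term accounting for the solutions in which the equation is zero (bounded by fixing one vanishing difference and using $A^{4k-2}$). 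Introduce the indicator $I$ of the difference multiset $\{a-a':a,a'\in\cA\}$ with $\widehat I(x)=|\sum_{a\in\cA}e_p(ax)|^2$, and expand $I$ via its Fourier transform. This converts $D_k^\times(\cA)$ into
\begin{align*}
\frac{A^{4k}}{p}+O_k(A^{4k-2})+\frac1p\sum_{y=1}^{p-1}\widehat I(y)\sum_{a_i,b_j}e_p\bigl(-y(a_1-a_2)\dots(a_{2k-1}-a_{2k})(b_{2k-1}-b_{2k})^{-1}\cdot(\text{rest})\bigr),
\end{align*}
where the main term $A^{4k}/p$ comes from the $y=0$ frequency.

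\textbf{Reduction to a bilinear incidence count.} The next step is to group the $2k-2$ free differences on the ``$a$-side'' and the $2k-3$ remaining differences on the ``$b$-side'' into single variables. Writing $\lambda=(a_3-a_4)\dots(a_{2k-1}-a_{2k})$ and $\mu=(b_1-b_2)\dots(b_{2k-3}-b_{2k-2})$, each value $\lambda$ (resp.\ $\mu$) is attained with multiplicity bounded, after summing, by $D^{\times}_{k-1}(\cA)$ in an $\ell^2$ sense. Exactly as in the passage from $W$ to $W_0$ in Lemma~\ref{lem:Dtimes}, I would bound the inner character sum by Cauchy--Schwarz against a second copy of $\widehat I$, extend a coprimality-removed term, and reduce to estimating
\begin{align*}
W_0=\frac1p\sum_{y,z=1}^{p-1}\widehat I(y)\widehat I(z)\,N(y,z),
\end{align*}
where $N(y,z)$ counts solutions to $(a_1-a_2)\lambda y=(b_1-b_2)\mu z$ with $\lambda,\mu$ as above. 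Dyadically decompose $y\in J(i)$, $z\in J(j)$ as in \eqref{eq:Jidef}; for fixed $(i,j)$, $N$ restricted to these ranges is a point--plane incidence count: take points $\cP=\{(a_1 y,y,b_1):y\in J(i),a_1,b_1\in\cA,\ \lambda\ \text{summed out}\}$ and planes $\Pi=\{x_1-a_2x_2-zx_3+b_2z=0\}$, but now each point and plane carries the extra multiplicity of $\lambda$ and $\mu$, which after Cauchy--Schwarz in those variables contributes the factor $D^\times_{k-1}(\cA)^{1/2}$. Apply Rudnev's bound (Lemma~\ref{lem:Rud}) with the maximum-collinearity parameter $\max\{A,|J(i)|\}$, symmetrize over $|J(i)|\lessgtr|J(j)|$, and sum the three resulting terms $\frac{(\cdot)|J(i)||J(j)|}{p}$, $(\cdot)|J(i)|^{1/2}|J(j)|$, $(\cdot)|J(i)||J(j)|$ against $2^{i+j}$, using $\sum 2^i|J(i)|\ll pA\log A$ and $\sum 2^i|J(i)|^{1/2}\ll p^{1/2}E_+(\cA)^{1/2}\log A$ together with the crude bound $E_+(\cA)\le A^3$. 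This yields the three main contributions $A^{4k-2}\log^2 A$, $pA^{4k-4}\log^2 A$, and $p^{1/2}A^{2k}D^\times_{k-1}(\cA)^{1/2}\log^2 A$.

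\textbf{Bookkeeping and the main obstacle.} The bulk of the work is arithmetic bookkeeping: tracking the exact power of $A$ produced by summing out the $\lambda$ and $\mu$ factors (there are $2k-3$ differences in $\lambda$ and $2k-4$ in $\mu$, plus the four ``incidence'' variables $a_1,a_2,b_1,b_2$, which should balance to give the exponents claimed), and verifying that the Cauchy--Schwarz step that pulls $D^\times_{k-1}(\cA)^{1/2}$ out is applied to the correct pair of variable blocks so that no cross terms are lost. The main obstacle I anticipate is precisely this alignment: one must choose which differences go into the ``incidence geometry'' ($a_1-a_2$ against $b_1-b_2$, giving the plane equation) and which are absorbed into scalar multiplicities, and then check that the worst case of collinear points — governed by $\max\{A,|J(i)|\}$ as in \eqref{eq:Wij1} — still only contributes at the level $A^{4k-2}\log^2 A$ after the multiplicity factors are restored, rather than something larger. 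Once the exponent of $A$ in each of the three Rudnev terms is confirmed to match $4k-2$, $4k-4$ (with its $p$), and $2k$ (with $p^{1/2}$ and $D^\times_{k-1}(\cA)^{1/2}$), the statement follows by collecting terms and recalling $\widetilde D^\times$ just subtracts the $A^{4k}/p$ main term.
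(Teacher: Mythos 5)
There is a genuine gap, and it sits exactly where you place your ``main obstacle''. The paper's proof does follow the template of Lemma~\ref{lem:Dtimes} (Fourier expansion, dyadic level sets as in \eqref{eq:Jidef}, Rudnev's bound Lemma~\ref{lem:Rud}), but its key move is to take $I$ to be the counting function of the multiset of \emph{products of $k-1$ differences} $\{(a_{2,1}-a_{2,2})\cdots(a_{k,1}-a_{k,2})\}$, not of the single difference set. After dividing the defining equation by $(a_{1,1}-a_{1,2})$, expanding this $I$ by Fourier inversion and then summing out the other block of $k-1$ differences (which produces a second copy of $\widehat I$ at a dilated frequency), the remaining equation is just the four-variable one $y(a_1-a_2)=z(a_3-a_4)$: the incidence count is completely unweighted, so Lemma~\ref{lem:Rud} applies verbatim as in Lemma~\ref{lem:Dtimes}. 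The quantity $D^{\times}_{k-1}(\cA)$ then enters purely through moment bounds for this heavier weight: $\sum_y|\widehat I(y)|\ll pA^{2k-3}$ (first moment) and $\sum_y|\widehat I(y)|^2\ll p\,D^{\times}_{k-1}(\cA)$ (second moment, by orthogonality), which give $\sum_i 2^i|J(i)|\ll pA^{2k-3}\log A$ and $\sum_i 2^i|J(i)|^{1/2}\ll p^{1/2}(\log A)^{1/2}D^{\times}_{k-1}(\cA)^{1/2}$; pairing the square-root sum with the first-moment sum in the term $A^3|J(i)|^{1/2}|J(j)|$ is what produces the single factor $p^{1/2}A^{2k}D^{\times}_{k-1}(\cA)^{1/2}\log^2 A$.

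Your version keeps the single-difference weight $\widehat I(y)=|\sum_a e_p(ay)|^2$ and tries to load the remaining $k-1$ (resp.\ $k-2$) differences onto the incidence count as multiplicities of $\lambda$ and $\mu$. As written this does not close: Rudnev's theorem is an unweighted incidence bound, and the points $(a_1y,y,b_1)$ and planes $x_1-a_2x_2-zx_3+b_2z=0$ you propose encode only the equation $y(a_1-a_2)=z(b_1-b_2)$, so the variables $\lambda,\mu$ never appear in the geometry; ``Cauchy--Schwarz in those variables'' would have to be applied to \emph{two} multiplicity-bearing blocks and would yield two factors (of the shape $D^{\times}_{k-1}(\cA)^{1/2}$ and $D^{\times}_{k-2}(\cA)^{1/2}$, or worse, mixed with $E_+(\cA)$ once you also insert $E_+(\cA)\le A^3$ for the light weight), not the single $D^{\times}_{k-1}(\cA)^{1/2}$ in the statement. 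Your own bookkeeping signals the problem: $\lambda=(a_3-a_4)\cdots(a_{2k-1}-a_{2k})$ has $k-1$ differences and $\mu$ should have $k-2$ once one $b$-difference is absorbed into $I$, yet you count ``$2k-3$'' and ``$2k-4$'', and the variables $a_1,a_2,b_1,b_2$ you reuse in the point/plane construction have already been consumed by the second $\widehat I$ or by $\lambda,\mu$ in your reduction. The repair is precisely the paper's choice of $I$: make the product of $k-1$ differences the Fourier weight, so the incidence configuration stays minimal and $D^{\times}_{k-1}(\cA)$ enters only through $\sum_y|\widehat I(y)|^2$.
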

\begin{proof}
Let $D'_k(\cA)$ count the number of solutions to the equation 
$$(a_{1,1}-a_{1,2})\dots (a_{k,1}-a_{k,2})= (a_{k+1,1}-a_{k+1,2})\dots(a_{2k,1}-a_{2k,2}),$$
with variables $a_{1,1},\dots,a_{2k,2}\in \cA$  satisfying
$$a_{1,1}\neq a_{1,2}, \quad a_{k+1,1}\neq a_{k+1,2},$$
so that 
\begin{align}
\label{eq:DD'}
D^{\times}_k(\cA)=D'_k(\cA)+O(A^{4k-2}).
\end{align}
Let $I(y)$ denote the indicator function of the multiset 
$$\{ (a_{2,1}-a_{2,2})\dots (a_{k,1}-a_{k,2}) \ : a_{2,1},\dots,a_{k,2}\in \cA\},$$
and let $\widehat I(y)$ denote the Fourier transform of $I$. We have
\begin{align*}
D'_k&(\cA)=\sum_{\substack{a_{j,1},a_{j,2}\in \cA \\ a_{1,1}\neq a_{1,2}\\ a_{k+1,1}\neq a_{k+1,2}}}I((a_{k+1,1}-a_{k+1,2})\dots(a_{2k,1}-a_{2k,2})(a_{1,1}-a_{1,2})^{-1}) \\ 
&=\frac{1}{p}\sum_{y=1}^{p-1}\widehat I(y)\\
& \sum_{\substack{a_{j,1},a_{j,2}\in \cA \\ a_{1,1}\neq a_{1,2}\\ a_{k+1,1}\neq a_{k+1,2}}}e_p\left(-y(a_{k+1,1}-a_{k+1,2})\dots(a_{2k,1}-a_{2k,2})(a_{1,1}-a_{1,2})^{-1} \right) \\
&=\frac{1}{p}\sum_{z=1}^p\sum_{y=1}^{p-1}\widehat I(y)\widehat I(-z)\sum_{\substack{a_{i,j}\in \cA \\ y(a_{1,1}-a_{1,2})=z(a_{2,1}-a_{2,2}) \\ a_{j,1}\neq a_{j,2}, \ j=1,2}}1,
\end{align*}
which implies that 
\begin{align}
\label{eq:DkW0}
D'_k(\cA)=\frac{A^{4k}}{p}+W_0+O(A^{4k-2}),
\end{align}
where 
$$W_0=\frac{1}{p}\sum_{z=1}^{p-1}\sum_{y=1}^{p-1}\widehat I(y)\widehat I(-z)\sum_{\substack{a_{i,j}\in \cA \\ y(a_{1,1}-a_{1,2})=z(a_{2,1}-a_{2,2}) \\ a_{j,1}\neq a_{j,2}, \ j=1,2}}1.$$
For integer $i\ge 1$ we define 
$$J(i)=\{ y\in \F_p^* \ : \ 2^{i-1}-1\le |\widehat I(y)|\le 2^i-1  \},$$
so that 
\begin{align}
\label{eq:W0d1}
W_0\ll \frac{1}{p}\sum_{\substack{i,j \ll \log{A^{2k}}}}2^{i+j}W(i,j),
\end{align}
where 
\begin{align*}
W(i,j)=\sum_{\substack{a_{i,j}\in \cA, \\ y\in J(i), z\in J(j)  \\ y(a_{1,1}-a_{1,2})=z(a_{2,1}-a_{2,2}) \\ a_{j,1}\neq a_{j,2}, \ j=1,2}}1.
\end{align*}
Using Lemma~\ref{lem:Rud} as in the proof of Lemma~\ref{lem:Dtimes}, we see that 
\begin{align}
\label{eq:Wij123123}
W(i,j) & \ll \frac{A^4|J(i)||J(j)|}{p}+A^3|J(i)|^{1/2}|J(j)|+A^3|J(j)|^{1/2}|J(i)| \\ & \quad \quad +A^2|J(i)||J(j)| \nonumber.
\end{align}
We have 
\begin{align*}
&\sum_{i\ll \log{A}}2^{i}|J(i)|\\
&\qquad \ll p+\sum_{y=1}^{p-1}\left|\sum_{\substack{a_{i,1},a_{i,2}\in \cA \\ 1\le i \le k-1}}e_p(y(a_{1,1}-a_{1,2})\dots (a_{k-1,1}-a_{k-1,2})) \right| \\
&\qquad \le p+\sum_{\substack{a_{i,1},a_{i,2}\in \cA \\ 2\le i \le k-1}}\sum_{y=1}^{p-1}\left|\sum_{a\in A}e_p(y(a_{2,1}-a_{2,2})\dots (a_{k-1,1}-a_{k-1,2})a) \right|^2 \\ 
&\qquad \ll pA^{2k-3},
\end{align*}
and 
\begin{align*}
& \sum_{i\ll \log{A}}2^{i}|J(i)|^{1/2}\ll  p^{1/2}\\ &+\left( \log{A}\sum_{y=1}^{p-1}\left|\sum_{\substack{a_{i,1},a_{i,2}\in \cA \\ 1\le i \le k-1}}e_p(y(a_{1,1}-a_{1,2})\dots (a_{k-1,1}-a_{k-1,2})) \right|^2 \right)^{1/2},
\end{align*}
so that 
\begin{align*}
\sum_{i\ll \log{A}}2^{i}|J(i)|^{1/2}\ll_k (\log{A})^{1/2}p^{1/2}D^{\times}_{k-1}(\cA)^{1/2}.
\end{align*}
Combining the above with~\eqref{eq:W0d1} and~\eqref{eq:Wij123123} we see that
\begin{align*}
W_0\ll_k \left(A^{4k-2}+pA^{4k-4}+p^{1/2}A^{2k}D^{\times}_{k-1}(\cA)^{1/2}\right)\log^2{A},
\end{align*}
and hence by~\eqref{eq:DD'} and~\eqref{eq:DkW0}
\begin{align*}
D_k^{\times}(A)&=\frac{A^{4k}}{p}\\
&\quad +O_k\left(\left(A^{4k-2}+pA^{4k-4}+p^{1/2}A^{2k}D^{\times}_{k-1}(\cA)^{1/2}\right)\log^2{A}\right),
\end{align*}
which completes the proof.
\end{proof}
Combining Lemma~\ref{lem:Dtimes} and Lemma~\ref{lem:Dktimes} with an induction argument gives the following Corollary.
\begin{cor}
\label{cor:Dk1}
For a prime number $p$ and a subset $\cA\subseteq \F_p$ with $|\cA|=A \ge p^{1/2}$ we have 
\begin{align*}
&D_k^{\times}(\cA)=\frac{A^{4k}}{p}\\
& \quad +O_k\left(\left(A^{4k-2}+p^{1-2^{-(k-1)}}A^{4k-4}E_{+}(\cA)^{2^{-(k-1)}}\right)\log^4{A}\right).
\end{align*}
\end{cor}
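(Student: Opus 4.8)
The plan is to prove Corollary~\ref{cor:Dk1} by induction on $k$, using Lemma~\ref{lem:Dktimes} as the recurrence engine and Lemma~\ref{lem:Dtimes} as the base case. For the base case $k=2$, Lemma~\ref{lem:Dtimes} gives
\begin{align*}
D_2^{\times}(\cA)=\frac{A^8}{p}+O\left(\left(A^6+pA^4+p^{1/2}A^4E_{+}(\cA)^{1/2}\right)\log^2{A}\right),
\end{align*}
and since $A\ge p^{1/2}$ we have $pA^4\le A^6$, so the $pA^4$ term is absorbed; the remaining error is $O\left(\left(A^6+p^{1/2}A^4E_{+}(\cA)^{1/2}\right)\log^2 A\right)$, which matches the claimed bound at $k=2$ (where $2^{-(k-1)}=1/2$, $4k-2=6$, $4k-4=4$), with $\log^2 A$ in place of $\log^4 A$, so it is even a touch stronger.

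For the inductive step, suppose the Corollary holds for $k-1$, i.e.\ $D_{k-1}^{\times}(\cA)=A^{4k-4}/p+O_{k-1}\left(\left(A^{4k-6}+p^{1-2^{-(k-2)}}A^{4k-8}E_{+}(\cA)^{2^{-(k-2)}}\right)\log^4 A\right)$. I would feed this into the recurrence from Lemma~\ref{lem:Dktimes},
\begin{align*}
D_k^{\times}(\cA)=\frac{A^{4k}}{p}+O_k\left(\left(A^{4k-2}+pA^{4k-4}+p^{1/2}A^{2k}D_{k-1}^{\times}(\cA)^{1/2}\right)\log^2 A\right),
\end{align*}
and estimate $D_{k-1}^{\times}(\cA)^{1/2}$ term by term. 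Using $\sqrt{x+y+z}\ll\sqrt{x}+\sqrt{y}+\sqrt{z}$, the three contributions to $p^{1/2}A^{2k}D_{k-1}^{\times}(\cA)^{1/2}$ are: first $p^{1/2}A^{2k}(A^{4k-4}/p)^{1/2}=A^{4k-2}$, absorbed into the $A^{4k-2}$ term; second $p^{1/2}A^{2k}(A^{4k-6}\log^4 A)^{1/2}=p^{1/2}A^{4k-3}\log^2 A$, which is $\le A^{4k-2}\log^2 A$ precisely because $A\ge p^{1/2}$; and third $p^{1/2}A^{2k}\left(p^{1-2^{-(k-2)}}A^{4k-8}E_{+}(\cA)^{2^{-(k-2)}}\right)^{1/2}\log^2 A=p^{1-2^{-(k-1)}}A^{4k-4}E_{+}(\cA)^{2^{-(k-1)}}\log^2 A$, using $1/2+(1-2^{-(k-2)})/2=1-2^{-(k-1)}$ and $2^{-(k-2)}/2=2^{-(k-1)}$ — this is exactly the main error term being claimed. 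Finally $pA^{4k-4}\le A^{4k-2}$ again by $A\ge p^{1/2}$, so it too is absorbed. Multiplying the outer $\log^2 A$ against the inner $\log^2 A$ yields at worst $\log^4 A$, giving the stated bound.

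The one genuine point requiring care — and the main (mild) obstacle — is bookkeeping the powers of $p$ in the $E_{+}(\cA)$ term through the square root and verifying that the exponent $2^{-(k-1)}$ and the $p$-power $1-2^{-(k-1)}$ come out exactly right at each stage; this is a routine but delicate geometric-series computation, and one must also check that $E_{+}(\cA)\le A^3$ is never needed to make the other terms subdominant, i.e.\ that the absorptions above hold unconditionally given only $A\ge p^{1/2}$. Everything else is just collecting error terms and noting that the constant depends on $k$ since the recurrence is applied $k-2$ times.
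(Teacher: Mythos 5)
Your proof is correct and is exactly the argument the paper intends: the paper proves Corollary~\ref{cor:Dk1} by precisely this combination of Lemma~\ref{lem:Dtimes} as the $k=2$ base case (with the $pA^4$ term absorbed into $A^6$ via $A\ge p^{1/2}$) and the recurrence of Lemma~\ref{lem:Dktimes}, iterated by induction. Your exponent bookkeeping ($\tfrac12+\tfrac12(1-2^{-(k-2)})=1-2^{-(k-1)}$ and $\tfrac12\cdot 2^{-(k-2)}=2^{-(k-1)}$) and the absorptions of $pA^{4k-4}$ and $p^{1/2}A^{4k-3}$ into $A^{4k-2}$ under $A\ge p^{1/2}$, together with the $\log^2 A\cdot\log^2 A=\log^4 A$ accounting, check out.
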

Using the trivial bound $E_{+}(\cA)\le A^3$ in Corollary~\ref{cor:Dk1} gives the following sharp asymptotic formula for $D_k^{\times}(\cA)$ for sets of cardinality a little larger than $p^{1/2}$.
\begin{cor}
\label{cor:Dksharp}
 For any $k\ge3$ and $A\ge p^{1/2+1/(2^{k+1}-6)}$  we have
\begin{align*}
D_k^{\times}(\cA)=\frac{A^{4k}}{p}+O_k\left(A^{4k-2}\log^4{A}\right) .
\end{align*}
\end{cor}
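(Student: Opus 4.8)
The plan is to deduce this immediately from Corollary~\ref{cor:Dk1} by inserting the trivial energy bound $E_{+}(\cA)\le A^3$. First I would observe that the hypothesis $A\ge p^{1/2+1/(2^{k+1}-6)}$ certainly implies $A\ge p^{1/2}$, so Corollary~\ref{cor:Dk1} applies, and substituting $E_{+}(\cA)\le A^3$ into its error term gives
\[
D_k^{\times}(\cA)=\frac{A^{4k}}{p}+O_k\left(\left(A^{4k-2}+p^{1-2^{-(k-1)}}A^{4k-4+3\cdot 2^{-(k-1)}}\right)\log^4 A\right).
\]
It then remains only to check that in the stated range the second term inside the bracket is dominated by the first, i.e. that
\[
p^{1-2^{-(k-1)}}A^{4k-4+3\cdot 2^{-(k-1)}}\le A^{4k-2}.
\]

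Next I would rearrange this. Cancelling the factor $A^{4k-4+3\cdot 2^{-(k-1)}}$, the inequality is equivalent to $p^{1-2^{-(k-1)}}\le A^{2-3\cdot 2^{-(k-1)}}$; since the exponent $2-3\cdot 2^{-(k-1)}$ is positive for $k\ge 2$, this is in turn equivalent to $A\ge p^{\theta_k}$ with
\[
\theta_k=\frac{1-2^{-(k-1)}}{2-3\cdot 2^{-(k-1)}}=\frac{2^{k-1}-1}{2^{k}-3}=\frac{2^{k}-2}{2^{k+1}-6}=\frac12+\frac{1}{2^{k+1}-6},
\]
where the second equality comes from multiplying numerator and denominator by $2^{k-1}$ and the third from multiplying by $2$. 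Hence the hypothesis on $A$ is exactly the threshold required, and the corollary follows. I would also note that the restriction to $k\ge 3$ is purely for non-triviality: at $k=2$ one gets $\theta_2=1$, so the formula would only be asserting an asymptotic valid in the uninteresting range $A\ge p$.

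The argument has no real obstacle beyond this elementary bookkeeping; all the substance sits in Corollary~\ref{cor:Dk1}, and thus ultimately in Lemma~\ref{lem:Dtimes}, Lemma~\ref{lem:Dktimes} and Rudnev's incidence bound. If one wished to justify the word \emph{sharp} in the statement, one would additionally remark that throughout the range $A\ge p^{1/2+1/(2^{k+1}-6)}$ the displayed main term $A^{4k}/p=A^{4k-2}\cdot(A^2/p)$ genuinely dominates the error $A^{4k-2}\log^4 A$, since $A^2/p\ge p^{2/(2^{k+1}-6)}$ outgrows any fixed power of $\log p$.
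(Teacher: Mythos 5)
Your proposal is correct and is exactly the paper's argument: substitute the trivial bound $E_{+}(\cA)\le A^3$ into Corollary~\ref{cor:Dk1} and check that the hypothesis $A\ge p^{1/2+1/(2^{k+1}-6)}$ is precisely the threshold making the term $p^{1-2^{-(k-1)}}A^{4k-4+3\cdot 2^{-(k-1)}}$ dominated by $A^{4k-2}$. Your exponent bookkeeping, including the identity $\frac{2^{k}-2}{2^{k+1}-6}=\frac12+\frac{1}{2^{k+1}-6}$, is accurate, so nothing further is needed.
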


We define $N(\cX,\cY,\cZ)$ to be the number of solutions to
\begin{align*}
x_1(y_1-z_1)=x_2(y_2-z_2)
\end{align*}
with $x_1,x_2 \in \cX, y_1, y_2 \in \cY$ and $z_1,z_2 \in \cZ$. We now recall \cite[Corollary 2.4]{PetShp}.
\begin{lemma} \label{lem:NXYZ}
Let $\cX, \cY, \cZ \subset \F^*_p$ with $|\cX|=X, |\cY|=Y, |\cZ|=Z$ and $M=\max(X,Y,Z)$. Then
\begin{align*}
N(\cX,\cY,\cZ) \ll \frac{X^2Y^2Z^2}p + X^{3/2}Y^{3/2}Z^{3/2}+MXYZ.
\end{align*}
\end{lemma}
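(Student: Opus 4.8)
The plan is to realise $N(\cX,\cY,\cZ)$ as a point--plane incidence count in $\F_p^3$ and then apply Rudnev's bound, Lemma~\ref{lem:Rud}. Expanding the defining equation gives
\[
x_1y_1 - z_1 x_1 + x_2 z_2 - x_2 y_2 = 0,
\]
which I would read as the incidence of the point $(x_1y_1,x_1,z_2)$ with the plane
\[
\Pi_{x_2,z_1,y_2}:\qquad \xi_1 - z_1\xi_2 + x_2\xi_3 - x_2y_2 = 0 .
\]
Accordingly, set $\cP=\{(x_1y_1,x_1,z_2):x_1\in\cX,\ y_1\in\cY,\ z_2\in\cZ\}$ and let $\Pi$ be the collection of planes $\Pi_{x_2,z_1,y_2}$ with $(x_2,z_1,y_2)\in\cX\times\cZ\times\cY$. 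Since $\cX\subset\F_p^*$, the second coordinate of a point of $\cP$ equals $x_1\neq 0$, which then determines $y_1=\xi_1/x_1$ and $z_2=\xi_3$; hence the parametrisation of $\cP$ is injective and $|\cP|=XYZ$. Likewise, after normalising the $\xi_1$--coefficient of $\Pi_{x_2,z_1,y_2}$ to $1$ one reads off $z_1$, $x_2$ and $x_2y_2$, and since $x_2\neq 0$ this recovers $(x_2,z_1,y_2)$; hence $|\Pi|=XYZ$. Each solution of $x_1(y_1-z_1)=x_2(y_2-z_2)$ then corresponds to exactly one incidence, so $N(\cX,\cY,\cZ)=\cI(\cP,\Pi)$.

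The one genuine point is the bound $k\le M$ for the maximum number $k$ of collinear points of $\cP$, which I would establish by casework on the direction $(d_1,d_2,d_3)\neq 0$ of a line $\ell$. If $d_2\neq 0$, the second coordinate $x_1$ varies injectively along $\ell$, so $\ell$ carries at most $X$ points of $\cP$. If $d_2=0$, then $x_1$ is constant along $\ell$, equal to some fixed nonzero value (otherwise $\ell\cap\cP=\emptyset$); in that case either $d_1\neq 0$, so that the first coordinate $x_1y_1$, and hence $y_1$, varies injectively and $\ell$ carries at most $Y$ points, or else $d_1=d_2=0$ and $d_3\neq 0$, so that $x_1$ and $y_1=\xi_1/x_1$ are both constant and only $z_2$ moves, giving at most $Z$ points. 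In every case $|\cP\cap\ell|\le M$.

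Having $|\cP|=|\Pi|=XYZ$ and $k\le M$, Lemma~\ref{lem:Rud} gives
\[
N(\cX,\cY,\cZ)=\cI(\cP,\Pi)\ll \frac{(XYZ)^2}{p}+(XYZ)^{3/2}+M\,XYZ=\frac{X^2Y^2Z^2}{p}+X^{3/2}Y^{3/2}Z^{3/2}+MXYZ,
\]
which is the asserted estimate. I do not expect a serious obstacle: the only steps needing care are verifying that both parametrisations are genuinely injective, so that Rudnev's inequality is applied to honest sets of points and planes of size $XYZ$, and checking the collinearity bound $k\le M$ via the case analysis above; the remainder is bookkeeping. (Note that only $\cX\subset\F_p^*$ is used; the hypotheses $\cY,\cZ\subset\F_p^*$ are not needed.)
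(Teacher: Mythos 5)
Your proof is correct: the point--plane encoding of $x_1y_1-z_1x_1+x_2z_2-x_2y_2=0$, the injectivity checks giving $|\cP|=|\Pi|=XYZ$, the collinearity bound $k\le M$, and the application of Lemma~\ref{lem:Rud} all hold up. The paper itself gives no proof of Lemma~\ref{lem:NXYZ} (it simply cites Corollary~2.4 of Petridis--Shparlinski), and your argument is essentially the same Rudnev-incidence argument used in that source, so there is nothing further to add.
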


\subsection{Proof of Theorem \ref{thm:multlin2}}
\begin{proof}
Let 
\begin{align*}
S= S(\cX_1, \dots, \cX_n; \omega_1, \dots, \omega_n).
\end{align*}
By Lemma \ref{lem:SXin}, after permuting the variables, we have
\begin{align*}
|S|^{2^{n-1}} &\ll (X_1 \dots X_{n})^{2^{n-1}}\left(\frac{1}{X_{n-1}}+\dots+\frac{1}{X_{1}^{2^{n-2}}}\right) \\ &\qquad + (X_1\dots X_{n-1})^{2^{n-1}-2}X_n^{2^{n-1}-1}\sum_{\substack{x_1,y_1 \in\cX_1 \\ x_1 \neq y_1}}\dots \sum_{\substack{x_{n-1}, y_{n-1} \in\cX_{n-1}\\ x_{n-1} \neq y_{n-1}}} \\
&\qquad \qquad \qquad  \qquad \qquad \left|\sum_{x_n \in \cX_n}\ep(x_n(x_1-y_1)\dots (x_{n-1}-y_{n-1}))\right|.
\end{align*}
We now collect together $(x_2-y_2)\dots(x_{n-1}-y_{n-1})=\lambda$ and denote the number of solutions to this equation to be $J(\lambda)$. Similarly we collect $x_1(x_n-y_n) = \mu$ and we denote the number of solutions to this equation to be $I(\mu)$. Hence,
\begin{align*}
|S|^{2^{n-1}}&\ll_n (X_1 \dots X_{n})^{2^{n-1}}\left(\frac{1}{X_{n-1}}+\dots+\frac{1}{X_{1}^{2^{n-2}}}\right)\\
&\qquad + (X_1 \dots X_{n-1})^{2^{n-1}-2}X_n^{2^{n-1}-1} \sum_{\lambda \in \F^*_p} J(\lambda) \left|  \sum_{\mu \in \F_p } I(\mu) \ep(\lambda \mu)\right|\\
&= (X_1 \dots X_{n})^{2^{n-1}}\left(\frac{1}{X_{n-1}}+\dots+\frac{1}{X_{1}^{2^{n-2}}}\right)\\
&\qquad +(X_1 \dots X_{n-1})^{2^{n-1}-2}X_n^{2^{n-1}-1} \sum_{\lambda \in \F^*_p}   \sum_{\mu \in \F_p } J(\lambda) \eta_\lambda I(\mu) \ep(\lambda \mu)
\end{align*}
for some complex weight $\eta_\lambda$ with $|\eta_\lambda|=1$. Now, by Lemma \ref{lem:NXYZ} with $X=Y=X_1, Z=X_n$ we have
\begin{align*}
\sum_{\mu \in \F_p} I(\mu)^2 = N(\cX_n, \cX_1, \cX_1) \ll X_1^{3}X_n^{3/2}. 
\end{align*}
Similarly,
\begin{align*}
\sum_{\lambda \in \F_p^*} J(\lambda)^2 = D^{\times,*}_{n-2}(\cX_2, \dots, \cX_{n-1}).
\end{align*}
We apply Corollary \ref{cor:Dktimes} and \ref{cor:Dktimes2} combined with Lemma \ref{lem:Dktimes*} along with Lemma \ref{lem:bilin} to obtain
\begin{align*}
 |S|^{2^{n-1}}&\ll_n (X_1 \dots X_{n})^{2^{n-1}}\left(\frac{1}{X_{n-1}}+\dots+\frac{1}{X_{1}^{2^{n-2}}}\right) \\
& \qquad \qquad+ (X_1\dots X_n)^{2^{n-1}}p^{1/2} X_1^{-1/2}X_n^{-1/4}\left(\prod_{i=2}^{n-1}B_n(\cX_i)^{2^{n-1}}\right).
\end{align*}
This completes the proof.
\end{proof}

\subsection{Proof of Theorem \ref{thm:multlin3}}
We note that the conditions~\eqref{thm:multlin3cond} and Corollary~\ref{cor:Dksharp} imply that 
$$\widetilde D_{n}^{\times,*}(\cX_i)\ll (\log{p})^{4}X_i^{4n-2},$$
and hence by Lemma~\ref{lem:SMV1}

\begin{align*}
&|S(\cX_1, \dots, \cX_n; \omega_1, \dots, \omega_n)|^{2^{n}}\ll (X_1\dots X_n)^{2^{n}}\left(\frac{1}{X_1^{2^{n-1}}}+\dots+\frac{1}{X_n} \right) \\
& \qquad \qquad +(\log{p})^{4}p^{1/2}(X_1\dots X_n)^{2^n-1/n},
\end{align*}
from which the desired result follows.
\section{Multinomial Exponential Sums}
\subsection{Preliminaries}
The aim of this section is to extend the results of \cite{Mac2} and \cite{MSS}  beyond the cases of trinomials and quadrinomials, to more general multinomial sums.

We recall the following bound of \cite{MSS}.
\begin{lemma} \label{Dtimesgroup}
Let $\cG \subseteq\F_p^*$ be a multiplicative subgroup with $|\cG|=G$. Then
\begin{align*}
D_2^\times(\cG) - \frac{G^8}{p} \ll  \left\{
\begin{array}{ll}
p^{1/2} G^{\frac{11}{2}},& \text{if $ G \ge p^{\frac{2}{3}}$},\\
G^7 p^{-\frac{1}{2}} , & \text{if $p^{\frac{2}{3}} > G \ge p^{\frac{1}{2}}\log p$},\\
G^6 \log G, & \text{if $G< p^{\frac{1}{2}}\log p$}. 
\end{array}
\right.
\end{align*}
\end{lemma}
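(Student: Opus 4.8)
The plan is to reduce $D_2^{\times}(\cG)$ to a multiplicative energy (taken modulo $\cG$) of the shifted subgroup $\cG-1$, and then to estimate that quantity by Rudnev's point--plane bound, Lemma~\ref{lem:Rud}, in the style of the proof of Lemma~\ref{lem:Dtimes}. The three ranges of $G$ will correspond to which of the three terms of Lemma~\ref{lem:Rud} dominates.

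First I would use that for $g,g'\in\cG$ one has $g-g'=g'(gg'^{-1}-1)$, so a difference of two subgroup elements is a subgroup element times an element of $\cG-1$. Substituting this into the four factors in the definition of $D_2^{\times}(\cG)$, peeling off the $O(G^{6})$ ``degenerate'' contribution in which one of the factors vanishes, and noting that in each admissible configuration three of the four ``subgroup parts'' determine the fourth, one obtains
\[
D_2^{\times}(\cG)=G^{3}\,T+O(G^{6}),\qquad
T=\#\Bigl\{(h_1,h_2,h_3,h_4)\in(\cG\setminus\{1\})^{4}:\ \frac{(h_1-1)(h_2-1)}{(h_3-1)(h_4-1)}\in\cG\Bigr\}.
\]
The expectation of $T$ is $G(G-1)^{4}/(p-1)=G^{5}/p+O(G^{4}/p)$, and $G^{3}$ times this is the main term $G^{8}/p$ up to an $O(G^{7}/p)$ error that is admissible in every range. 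Hence it remains to bound $T-G^{5}/p$ by $p^{1/2}G^{5/2}$, by $G^{4}p^{-1/2}$ and by $G^{3}\log G$ in the three ranges respectively (the residual $O(G^{6})$ is itself dominated by the target in each case).

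Next I would open $T$ through the $(p-1)/G$ multiplicative characters trivial on $\cG$ (equivalently, through additive characters exactly as in the proof of Lemma~\ref{lem:Dtimes}), which turns $T-G^{5}/p$ into the fourth moment $\sum_{\psi\neq\chi_0,\ \psi|_{\cG}=1}\bigl|\sum_{g\in\cG}\psi(g-1)\bigr|^{4}$. Dyadically decomposing the relevant level sets $J(i)$ and following the incidence scheme of Lemma~\ref{lem:Dtimes}, the problem reduces, per dyadic pair, to counting solutions of an equation of the shape $y(g_1-1)=z(g_2-1)$ with $g_1,g_2\in\cG$ and $y,z$ in level sets; taking $\cP=\{(g_1y,\,y,\,g_2)\}$ and $\Pi=\{X_1-X_2-zX_3+z=0\}$ one has $W(i,j)\le\cI(\cP,\Pi)$, and Lemma~\ref{lem:Rud} applies. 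Summing over the $O(\log G)$ dyadic scales with the Parseval inputs $\sum_i2^{i}|J(i)|\ll pG$ and $\sum_i2^{i}|J(i)|^{1/2}\ll p^{1/2}E_+(\cG)^{1/2}\log G$, and feeding in the known nontrivial additive-energy bounds for a multiplicative subgroup (e.g.\ $E_+(\cG)\ll G^{5/2}$ for $G\le p^{2/3}$, and the trivial $E_+(\cG)\le G^{3}$ otherwise) together with the scarcity of rich lines in the grid $\cG\times\cG$, the three terms of Lemma~\ref{lem:Rud} produce the three regimes, the break points $p^{2/3}$ and $p^{1/2}\log p$ being precisely where the dominant term changes. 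For $G\ge p^{2/3}$ no subgroup structure is needed at all, since $D_2^{\times}(\cG)-G^{8}/p\ll p^{1/2}G^{11/2}$ is just the special case of the bound $D_2^{\times}(\cA)-A^{8}/p\ll p^{1/2}A^{11/2}$ recalled (via the collinear-triples estimate of \cite{MPR-NRS}) just before Lemma~\ref{lem:Dtimes}.

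The main obstacle is making the incidence step work \emph{uniformly} down to $G\approx p^{1/2}$ and with no more than the single logarithm appearing in the statement. A black-box use of Lemma~\ref{lem:Dtimes} with $E_+(\cG)\le G^{3}$ only recovers $p^{1/2}G^{11/2}$ throughout, and even with $E_+(\cG)\ll G^{5/2}$ inserted one loses in a subrange of $p^{1/2}\le G<p^{2/3}$; moreover the collinear contribution in that argument is of size $\approx pG^{4}$, which already exceeds the target $G^{6}$ once $G<p^{1/2}$. So one must genuinely exploit the arithmetic of $\cG$ --- the bounded number of points of $\cG\times\cG$ on a typical line and the nontrivial additive energy of $\cG$ --- to tame exactly those terms, and the dyadic decomposition has to be organised so that no spurious powers of $\log$ accumulate in the first two cases.
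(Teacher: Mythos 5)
There is a genuine gap: your argument only really establishes the first case. The opening reduction is fine — writing $a_1-a_2=a_2(h_1-1)$ etc.\ does give $D_2^\times(\cG)=G^3T+O(G^6)$ with $T$ the coset-energy quantity, and expanding the indicator of $\cG$ in the $(p-1)/G$ characters trivial on $\cG$ identifies $T-G(G-1)^4/(p-1)$ with (a normalisation of) the fourth moment $\frac{G}{p-1}\sum_{\psi\neq\chi_0,\,\psi|_\cG=1}\bigl|\sum_{g\in\cG}\psi(g-1)\bigr|^4$; and for $G\ge p^{2/3}$ the claim is indeed already contained in the general-set bound $D_2^\times(\cA)-A^8/p\ll p^{1/2}A^{11/2}$ quoted before Lemma~\ref{lem:Dtimes}. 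But the whole content of Lemma~\ref{Dtimesgroup} is the two remaining ranges, and there your plan does not close. As you yourself compute, rerunning the level-set/point--plane scheme of Lemma~\ref{lem:Dtimes} with Lemma~\ref{lem:Rud} produces a collinear-type term of order $pG^4(\log G)^2$, which exceeds $G^6\log G$ throughout the third range, and even after inserting $E_{+}(\cG)\ll G^{5/2}$ the middle term $p^{1/2}G^4E_{+}(\cG)^{1/2}=p^{1/2}G^{21/4}$ beats $G^7p^{-1/2}$ only when $G\ge p^{4/7}$, so a subrange of $p^{1/2}\log p\le G<p^{2/3}$ is lost; moreover the $(\log)^2$ losses are incompatible with the stated single-$\log$ (and $\log$-free) bounds. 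The sentence ``one must genuinely exploit the arithmetic of $\cG$ --- the bounded number of points of $\cG\times\cG$ on a typical line and the nontrivial additive energy'' is exactly where the proof would have to happen, and it is not supplied; the line-richness input for $\cG\times\cG$ is itself a nontrivial subgroup statement that you cannot invoke as a black box.

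For comparison, the paper does not prove this lemma at all: it is quoted verbatim from \cite{MSS}, where the decisive ranges $G<p^{2/3}$ rest on bounds for the multiplicative energy of the shifted subgroup $\cG-1$ (equivalently, the fourth moment of $\sum_{g\in\cG}\psi(g-1)$ over the relevant characters) obtained by Stepanov's method in the spirit of Heath-Brown--Konyagin and Shkredov, not by re-running the Rudnev incidence computation of Lemma~\ref{lem:Dtimes}. So your reduction to $T$ is compatible with the cited source, but the engine that actually yields $G^7p^{-1/2}$ and $G^6\log G$ is missing from your proposal and would have to be imported (or reproved) from \cite{MSS}.
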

Combining with \eqref{eq:Dkk-1} and observing which term dominates we get the following corollary.
\begin{cor} \label{cor:Dtimesgroup}
Let $\cG \subseteq\F_p^*$ be a multiplicative subgroup with $|\cG|=G$. Then
\begin{align*}
D_k^\times(\cG)\ll  \left\{
\begin{array}{ll}
G^{4k}p^{-1} &\text{if $G \ge p^{\frac{1}{2}}\log p$},\\
G^{4k-2+o(1)}, & \text{if $G< p^{\frac{1}{2}}\log p$}. 
\end{array}
\right.
\end{align*}
\end{cor}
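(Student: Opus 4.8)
The plan is to induct on $k$, with Lemma~\ref{Dtimesgroup} serving as the base case $k=2$ and the recurrence~\eqref{eq:Dkk-1} driving the inductive step; at each stage one simply reads off which of the resulting terms dominates. For the base case I would unpack the three alternatives of Lemma~\ref{Dtimesgroup}: when $G\ge p^{2/3}$ the error $p^{1/2}G^{11/2}$ is $\ll G^8/p$ (since $G\ge p^{2/3}\ge p^{3/5}$), and when $p^{2/3}>G\ge p^{1/2}\log p$ the error $G^7p^{-1/2}$ is $\ll G^8/p$ (since $G\ge p^{1/2}$), so together with the trivial $O(G^6)$ from zero solutions one gets $D_2^\times(\cG)\ll G^8/p$; while when $G<p^{1/2}\log p$ one has $G^8/p\le G^6(\log p)^2$ and error $G^6\log G$, hence $D_2^\times(\cG)\ll G^{6+o(1)}=G^{4\cdot2-2+o(1)}$.

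For the inductive step, assuming the bound at level $k-1\ge2$, I would substitute it into~\eqref{eq:Dkk-1} --- noting that $D_{k-1}^\times(\cG)-G^{4(k-1)}/p$ is nonnegative up to the lower-order terms already absorbed, so the square root is legitimate --- giving $D_k^\times(\cG)-G^{4k}/p\ll_k(\log G)^2G^{2k+1}\big(D_{k-1}^\times(\cG)-G^{4(k-1)}/p\big)^{1/2}$. In the range $G\ge p^{1/2}\log p$, the hypothesis $D_{k-1}^\times(\cG)\ll G^{4(k-1)}/p$ yields $D_k^\times(\cG)-G^{4k}/p\ll(\log G)^2G^{4k-1}p^{-1/2}$, which by $G\ge p^{1/2}\log p$ is $\ll G^{4k}/p$ up to a bounded power of $\log p$; since the $G^{4k-2}$ piece coming from zero solutions is also $\ll G^{4k}/p$ here, one obtains the first line of the corollary. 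In the range $G<p^{1/2}\log p$, the hypothesis $D_{k-1}^\times(\cG)\ll G^{4(k-1)-2+o(1)}$ --- which also bounds $D_{k-1}^\times(\cG)-G^{4(k-1)}/p$, since $G^{4(k-1)}/p\le G^{4(k-1)-2}(\log p)^2$ --- gives $D_k^\times(\cG)-G^{4k}/p\ll(\log G)^2G^{2k+1}G^{2k-1+o(1)}=G^{4k-2+o(1)}$, and using $G^{4k}/p\le G^{4k-2}(\log p)^2$ once more gives $D_k^\times(\cG)\ll G^{4k-2+o(1)}$.

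The step to treat most carefully is the bookkeeping of logarithmic factors in the first range: each pass through~\eqref{eq:Dkk-1} multiplies by $(\log G)^2$ and halves the exponent on whatever power of $\log p$ the previous error carried, so the log-exponent obeys a recursion of the shape $c_k\mapsto 2+c_k/2$ with bounded fixed point; hence for every fixed $k$ the accumulated power of $\log p$ stays uniformly bounded in $k$, and the excess remains of size $G^{4k}/p$ up to this inessential factor (which is in any case $p^{o(1)}$). In the second range these logs are harmless, being absorbed into the $o(1)$ in the exponent. Throughout one must keep verifying, as the authors note, that it is indeed the recursive term in~\eqref{eq:Dkk-1} that dominates rather than the $G^{4k-2}$- or $pG^{4k-4}$-type contributions implicit in the error terms; this is automatic since both of those are $\ll G^{4k}/p$ precisely in the range $G\ge p^{1/2}$ to which the first case applies.
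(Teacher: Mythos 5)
Your proposal is correct and follows exactly the route the paper takes: the paper's proof is the single remark that one combines Lemma~\ref{Dtimesgroup} with the recurrence~\eqref{eq:Dkk-1} and observes which term dominates, which is precisely your induction on $k$ with the two ranges $G\ge p^{1/2}\log p$ and $G<p^{1/2}\log p$. Your extra bookkeeping (nonnegativity of $D_{k-1}^\times(\cG)-G^{4(k-1)}/p$ and the bounded accumulation of $\log$-powers, which the stated first branch silently absorbs) only makes explicit what the paper glosses over.
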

We also have the following result as a consequence of \cite[Lemma 2.4]{Mac2}.
\begin{lemma} \label{lem:NGHgroup}
Let $\cG, \cH \subset \F^*_p$ be multiplicative subgroups with cardinalities $G,H$ respectively with $G \ge H$. Then,
\begin{align*}
N(\cH, \cG, \cG) \ll  \left\{
\begin{array}{ll}
H^2G^{\frac{7}{2}}p^{-\frac{1}{2}} &\text{if $G \ge p^{\frac{1}{2}}\log p$},\\
H^2G^{\frac{5}{2}+o(1)}, & \text{if $G< p^{\frac{1}{2}}\log p$}. 
\end{array}
\right.
\end{align*}
\end{lemma}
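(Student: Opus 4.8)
The plan is first to establish, for an \emph{arbitrary} set $\cX\subseteq\F_p^{*}$ and any multiplicative subgroup $\cG\subseteq\F_p^{*}$, the clean bound
\begin{align*}
N(\cX,\cG,\cG)\ll X^{2}E^{+}(\cG)
\end{align*}
(this is \cite[Lemma~2.4]{Mac2}, or follows by the same short argument), and then to take $\cX=\cH$ and insert the standard bounds for the additive energy $E^{+}(\cG)$ of a multiplicative subgroup. The displayed inequality is elementary: writing $r(w)$ for the number of representations $w=g-g'$ with $g,g'\in\cG$, and $J(v)$ for the number of solutions of $x(g-g')=v$ with $x\in\cX,\ g,g'\in\cG$, we have $N(\cX,\cG,\cG)=\sum_{v}J(v)^{2}$, where $J(0)=XG$ and $J(v)=\sum_{x\in\cX}r(v/x)$ for $v\neq0$. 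Expanding the square over $v\neq0$, substituting $w=v/x$ for each fixed $x$, and collecting the ratio $d=x/x'$, one is left with $\sum_{x,x'\in\cX}T(d)$, where $T(d)=\sum_{w\neq0}r(w)r(wd)$ is the number of quadruples $(g_{1},g_{2},g_{3},g_{4})\in\cG^{4}$ with $g_{3}-g_{4}=d(g_{1}-g_{2})$; by the Cauchy--Schwarz inequality $T(d)\le E^{+}(\cG)^{1/2}E^{+}(d\cG)^{1/2}=E^{+}(\cG)$, so that $N(\cX,\cG,\cG)\le X^{2}G^{2}+X^{2}E^{+}(\cG)$, and the first term is absorbed since $E^{+}(\cG)\ge G^{2}$.

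It then remains to bound $E^{+}(\cG)$. Here I would use the standard estimate $E^{+}(\cG)\ll G^{5/2+o(1)}$ for $G\le p^{2/3}$ (Heath-Brown--Konyagin, or via the collinear-triple bounds of \cite{MPR-NRS}, in the spirit of the dyadic incidence argument already used in Lemma~\ref{lem:Dtimes}), together with the easy bound $E^{+}(\cG)\ll G^{4}/p+pG$ valid for all $G$, which in particular gives $E^{+}(\cG)\ll G^{4}/p$ once $G\ge p^{2/3}$. Splitting on the size of $G$: if $G\ge p^{1/2}\log p$ then---writing $p^{1/2}\log p=p^{1/2+o(1)}$---either $G<p^{2/3}$, in which case $E^{+}(\cG)\ll G^{5/2+o(1)}\ll G^{7/2}p^{-1/2}$, or $G\ge p^{2/3}$, in which case $E^{+}(\cG)\ll G^{4}/p\ll G^{7/2}p^{-1/2}$; either way $N(\cH,\cG,\cG)\ll H^{2}G^{7/2}p^{-1/2}$. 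If instead $G<p^{1/2}\log p$ then $E^{+}(\cG)\ll G^{5/2+o(1)}$ directly and $N(\cH,\cG,\cG)\ll H^{2}G^{5/2+o(1)}$. The three natural regimes $G\ge p^{2/3}$, $p^{1/2}\log p\le G<p^{2/3}$, and $G<p^{1/2}\log p$ thus collapse to the two stated cases, exactly as in the passage from Lemma~\ref{Dtimesgroup} to Corollary~\ref{cor:Dtimesgroup}.

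The reduction to $X^{2}E^{+}(\cG)$ is not the difficulty; the point requiring care is selecting and justifying the correct subgroup additive-energy bound uniformly over the whole range of $G$---in particular that $E^{+}(\cG)\ll G^{5/2+o(1)}$ is genuinely available up to $G=p^{2/3}$, which is exactly the window $[p^{1/2}\log p,\,p^{2/3})$ where the elementary bound $G^{4}/p+pG$ is too weak, and that the main term $G^{4}/p$ never overtakes $G^{7/2}p^{-1/2}$---after which the merging of the regimes and the verification that the diagonal term $H^{2}G^{2}$ is dominated (immediate from $E^{+}(\cG)\ge G^{2}$) are routine.
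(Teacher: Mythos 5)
Your derivation is correct, and it is worth pointing out that the paper itself supplies no argument for this lemma: it is simply quoted as a consequence of \cite[Lemma~2.4]{Mac2}. Your proof is therefore a genuinely self-contained alternative. The reduction $N(\cH,\cG,\cG)\le H^2G^2+H^2E^{+}(\cG)\ll H^2E^{+}(\cG)$ is airtight: writing $N=\sum_v J(v)^2$, the diagonal term $J(0)^2=H^2G^2$ is absorbed since $E^{+}(\cG)\ge G^2$, and for $v\neq 0$ the substitution $w=v/x$ together with Cauchy--Schwarz gives $\sum_{w\neq0}r(w)r(wd)\le E^{+}(\cG)$ uniformly in $d\neq0$; notably this step never uses that $\cH$ is a subgroup, nor the hypothesis $G\ge H$, so you prove something more general than the stated lemma. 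The subsequent case analysis is also correct: the Fourier bound $E^{+}(\cG)\ll G^4/p+pG$ (via $|\widehat{\cG}(\xi)|^2\le p$ and Parseval) handles $G\ge p^{2/3}$, Heath-Brown--Konyagin handles $G\le p^{2/3}$, and in the intermediate window $p^{1/2}\log p\le G<p^{2/3}$ your bound $H^2G^{5/2}$ is in fact sharper than the stated $H^2G^{7/2}p^{-1/2}$ (since $G\ge p^{1/2}$), so the two-case statement is just a merged form of what you obtain. One small repair: quote the Heath-Brown--Konyagin energy bound in its clean form $E^{+}(\cG)\ll G^{5/2}$ for $G\le p^{2/3}$, as in \cite{HBK}, rather than $G^{5/2+o(1)}$. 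With the $o(1)$ version, your comparison $G^{5/2+o(1)}\ll G^{7/2}p^{-1/2}$ is not literally justified by $G\ge p^{1/2}\log p$, because the hidden factor $G^{o(1)}$ need not be dominated by $\log p$, and you would recover the first case only with an extra $G^{o(1)}$; using the clean bound, which is what is actually proved, this issue disappears and the stated inequality follows exactly.
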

We then have the following result on multilinear exponential sums over subgroups, which may be of independent interest to the reader.
\begin{lemma} \label{lem:multlingroup}
Let $\cX_i \subset \F_p$ be multiplicative subgroups with $|\cX_i|=X_i$, $X_1\ge X_2\ge\dots\ge  X_{n}$, $n\ge 4$. Then
\begin{align*}
S(\cX_1, \dots, \cX_n; \omega_1, \dots, \omega_n) &\ll_n (X_1\dots X_n)p^{\frac{1}{2^n}} A_n(\cX_1) \prod_{i=2}^{n-1} B_n(\cX_i) \\
&\qquad  + (X_1\dots X_n)\left(\frac{1}{X_n^{1/2}}+\dots+\frac{1}{X_1^{1/2^n}} \right)
\end{align*}
where
\begin{align*}
A_n(\cX_1) = \left\{
\begin{array}{ll}
X_1^{-\frac{1}{2^{n+1}}}p^{-\frac{1}{2^{n+1}}}, & \text{if $X_1 \ge p^{\frac{1}{2}}\log p$}, \\
X_1^{-\frac{3}{2^{n+1}}+o(1)}, & \text{if $X_1 < p^{\frac{1}{2}}\log p$,}
\end{array}
\right.
\end{align*}
and
\begin{align*}
B_n(\cX_i) = \left\{
\begin{array}{ll}
p^{-\frac{1}{2^{n}(n-2)}},& \text{if $ X_i \ge p^{\frac{1}{2}}\log p$},\\
X_i^{-\frac{1}{2^{n-1}(n-2)}},& \text{if $X_i< p^{\frac{1}{2}}\log p$.}
\end{array}
\right.
\end{align*}
\end{lemma}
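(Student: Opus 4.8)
The plan is to run the argument behind Theorem~\ref{thm:multlin2} but to feed in the sharper subgroup estimates, namely Corollary~\ref{cor:Dtimesgroup} and Lemma~\ref{lem:NGHgroup}, in place of Corollaries~\ref{cor:Dktimes}--\ref{cor:Dktimes2} and Lemma~\ref{lem:NXYZ}. First I would apply Lemma~\ref{lem:SXin}, having permuted the variables so that the smallest set $\cX_n$ plays the role of the inner summation variable; this is legitimate since Lemma~\ref{lem:SXin} carries no hypothesis on the relative sizes of the $\cX_i$. After taking $2^{n-1}$-th roots the first (``diagonal'') term of Lemma~\ref{lem:SXin} produces, exactly as in the proof of Theorem~\ref{thm:multlin2} and using $X_1\ge\dots\ge X_n$, the contribution $\frac{1}{X_n^{1/2}}+\dots+\frac{1}{X_1^{1/2^n}}$, so it remains to bound
\begin{align*}
\Sigma=\sum_{\substack{x_i,y_i\in\cX_i,\ x_i\neq y_i\\ 1\le i\le n-1}}\left|\sum_{x_n\in\cX_n}\ep\bigl(x_n(x_1-y_1)\dots(x_{n-1}-y_{n-1})\bigr)\right|.
\end{align*}

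Next I would collect $(x_2-y_2)\dots(x_{n-1}-y_{n-1})=\lambda$, writing $J(\lambda)$ for the number of such representations with $x_i\neq y_i$, and collect $x_n(x_1-y_1)=\mu$, writing $I(\mu)$ for its number of representations, so that $\Sigma=\bigl|\sum_\lambda\sum_\mu J(\lambda)\eta_\lambda I(\mu)\ep(\lambda\mu)\bigr|$ for some unimodular weights $\eta_\lambda$. Vinogradov's bilinear bound (Lemma~\ref{lem:bilin}), applied with the weights $J$ and $I$, gives
\begin{align*}
\Sigma\le\Bigl(p\,\sum_\lambda J(\lambda)^2\,\sum_\mu I(\mu)^2\Bigr)^{1/2},
\end{align*}
and by definition $\sum_\mu I(\mu)^2=N(\cX_n,\cX_1,\cX_1)$ while $\sum_\lambda J(\lambda)^2=D^{\times,*}_{n-2}(\cX_2,\dots,\cX_{n-1})$.

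I would then estimate the two factors separately. Since $\cX_n$ and $\cX_1$ are subgroups and $X_1\ge X_n$, Lemma~\ref{lem:NGHgroup} applies directly to $N(\cX_n,\cX_1,\cX_1)$; in both ranges of $X_1$ the resulting bound can be written as $N(\cX_n,\cX_1,\cX_1)^{1/2}\ll X_n X_1^{2}A_n(\cX_1)^{2^{n-1}}$, which is how the factor $A_n(\cX_1)$ enters. For the other factor, Lemma~\ref{lem:Dktimes*} gives $D^{\times,*}_{n-2}(\cX_2,\dots,\cX_{n-1})\le\bigl(\prod_{i=2}^{n-1}D^\times_{n-2}(\cX_i)\bigr)^{1/(n-2)}$ and, applying Corollary~\ref{cor:Dtimesgroup} with $k=n-2\ge2$ to each subgroup $\cX_i$, one gets $\bigl(\sum_\lambda J(\lambda)^2\bigr)^{1/2}\ll\prod_{i=2}^{n-1}X_i^{2}B_n(\cX_i)^{2^{n-1}}$. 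Substituting both estimates into the bound for $\Sigma$ and combining with the prefactor $(X_1\dots X_{n-1})^{2^{n-1}-2}X_n^{2^{n-1}-1}$ from Lemma~\ref{lem:SXin}, the powers of $X_1,\dots,X_n$ reassemble into $(X_1\dots X_n)^{2^{n-1}}$ and the $p^{1/2}$ from Lemma~\ref{lem:bilin} becomes $(p^{1/2^n})^{2^{n-1}}$, so a final $2^{n-1}$-th root yields the stated inequality.

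The step I expect to require the most care is purely arithmetic bookkeeping: $X_1$ and each of $X_2,\dots,X_{n-1}$ may independently lie above or below the threshold $p^{1/2}\log p$, so one must check that in every combination the exponents of $p$ and of the $X_i$ fit together into the piecewise definitions of $A_n$ and $B_n$, and in particular that in the ``small'' regime it is the terms $X_1^2X_n^2$ inside $N(\cX_n,\cX_1,\cX_1)$ and $X_i^{4(n-2)-2+o(1)}$ inside $D^\times_{n-2}(\cX_i)$ that dominate. A secondary point is to confirm that the permutation of variables in Lemma~\ref{lem:SXin}, together with the root extraction, genuinely reproduces the diagonal sum $\frac{1}{X_n^{1/2}}+\dots+\frac{1}{X_1^{1/2^n}}$ as written in the statement rather than some rearrangement of it.
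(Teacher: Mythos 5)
Your proposal is correct and takes essentially the same route as the paper, whose proof of Lemma~\ref{lem:multlingroup} is precisely the argument of Theorem~\ref{thm:multlin2} with Corollary~\ref{cor:Dtimesgroup} and Lemma~\ref{lem:NGHgroup} substituted for the arbitrary-set estimates; your bookkeeping showing $N(\cX_n,\cX_1,\cX_1)^{1/2}\ll X_nX_1^2A_n(\cX_1)^{2^{n-1}}$ and $D^{\times,*}_{n-2}(\cX_2,\dots,\cX_{n-1})^{1/2}\ll\prod_{i=2}^{n-1}X_i^2B_n(\cX_i)^{2^{n-1}}$ checks out. The ordering of the boundary terms that you flag as needing confirmation is an artifact of the paper's own statement (which writes them in the reverse order to Theorem~\ref{thm:multlin2}), not a defect of your argument, since the proof actually yields $X_1^{-1/2}+\dots+X_{n-1}^{-1/2^{n-1}}$ in both cases.
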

\begin{proof}
The proof follows that of Theorem \ref{thm:multlin2}, however we use Corollary \ref{cor:Dtimesgroup} and Lemma \ref{lem:NGHgroup} in place of their relevant results on arbitrary sets.
\end{proof}
\subsection{Proof of Theorem \ref{thm:multinom}}
Let $\alpha_{k_i} = \gcd(k_i, p-1)$ for each $i=1, \dots, t$. We then let $\cG_{\alpha_i}$ be the subgroups of $\F^*_p$ generated by the elements of order $\alpha_{k_i}$. Then
\begin{align*}
T_\chi(\Psi) &= \frac{1}{\alpha_{k_1} \dots \alpha_{k_{t-1}}} \sum_{x_1 \in \cG_{\alpha_1}}\dots \sum_{x_{t-1} \in \cG_{\alpha_{t-1}}}\\
 & \qquad \qquad \qquad \qquad \sum_{x_t \in \F^*_p} \chi(x_1\dots x_t)\ep(\Psi(x_1 \dots x_t)) \\
 &= \frac{1}{\alpha_{k_1} \dots \alpha_{k_{t-1}}} \sum_{x_1 \in \cG_{\alpha_1}}\dots \sum_{x_{t-1} \in \cG_{\alpha_{t-1}}} \sum_{x_n \in \F^*_p} \chi(x_1\dots x_t)\\
 & \ep(a_1(x_2\dots x_t)^{k_1})\dots \ep(a_{t-1}(x_1\dots x_{t-2}x_t)^{k_{t-1}})\e_p(a_t(x_1\dots x_t)^{k_t})\\
 &= \frac{1}{\alpha_{k_1} \dots \alpha_{k_{t-1}}} \sum_{x_1 \in \cG_{\alpha_1}}\dots\sum_{x_{t-1} \in \cG_{\alpha_{t-1}}}\\
 & \qquad \sum_{x_t \in \F^*_p} \omega_1(\textbf{x}) \dots \omega_t (\textbf{x}) \ep(a_t(x_1\dots x_t)^{k_t}).
\end{align*}
Now the image $\cX_t = \{x_t^{k_t} : x_t \in \F^*_p\}$ of non-zero $k_tth$ powers contains $(p-1)/\alpha_{k_t}$ elements, each appearing with multiplicity $\alpha_{k_t}$. Similarly, the images $\cX_i = \{x_i^{k_t} : x_i \in \cG_{\alpha_{k_i}}\}$ contain $\alpha_{k_i}/\gcd(\alpha_{k_i}, \alpha_{k_t})$ elements, each appearing with multiplicity $\gcd(\alpha_{k_i}, \alpha_{k_t})$, for $i=1, \dots, t-1$. Hence, we apply Lemma \ref{lem:multlingroup} to obtain
\begin{align*}
&T_\chi(\Psi) \ll_t \frac{\alpha_{k_t}}{\beta_{k_1} \dots \beta_{k_{t-1}}} \cdot\left (p^{\frac{1}{2^t}}\beta_{k_{t-1}} A_t\left(\frac{p-1}{\alpha_{k_t}}\right) \prod_{i=1}^{t-2}B_t(\beta_{k_i}) \right) \\
& + \frac{\alpha_{k_t}}{\beta_{k_1} \dots \beta_{k_{t-1}}} \cdot \frac{p-1}{\alpha_{k_t}}\beta_{k_1} \dots\beta_{k_{t-1}}\left( \left(\frac{\alpha_{k_t}}{p-1}\right)^{\frac{1}{2}}+\beta_{k_1}^{\frac{-1}{2^2}} +\dots+\beta_{k_{t-1}}^{\frac{-1}{2^{t}}}\right).
\end{align*}
By simplifying we reach the required result.

\section{Weyl Sums Over Generalized Arithmetic Progressions}
\subsection{Preliminaries}
We will require an estimate for the $\ell_1$ norm of the Fourier transform of proper generalized arithmetic progressions which is due to Shao~\cite{Shao1}.
\begin{lemma}
\label{lem:genAPf}
Let $\cA\subseteq \F_p$ be a proper generalized arithmetic progression of rank $r$ and let $\widehat \cA(z)$ denote the Fourier transform of $\cA$. Then we have 
\begin{align*}
\sum_{z=1}^{p}|\widehat \cA(z)|\ll_r p(\log{p})^r.
\end{align*} 
\end{lemma}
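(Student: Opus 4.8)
The plan is to expand $\widehat{\cA}$ into a product of one--dimensional geometric sums, reduce the $\ell^1$--estimate to a dyadic sum over the sizes of certain Bohr--type sets, and control those sets with a Fej\'er kernel together with the properness hypothesis.

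Write $\cA=\{\alpha_1h_1+\dots+\alpha_rh_r+\beta:\ 1\le h_i\le H_i\}$. Properness says the defining box maps bijectively onto $\cA$, so $H_1\cdots H_r=|\cA|\le p$ and
\[
\widehat{\cA}(z)=\sum_{a\in\cA}e_p(az)=e_p(\beta z)\prod_{i=1}^{r}S_i(z),\qquad S_i(z)=\sum_{h=1}^{H_i}e_p(\alpha_i hz).
\]
Discarding the indices with $H_i=1$ (they contribute only a unimodular constant), we may assume each $\alpha_i\not\equiv0\pmod p$, whence $|S_i(z)|\ll\min\bigl(H_i,\,p/\|\alpha_iz\|_p\bigr)$, where $\|m\|_p$ is the distance from $m$ to the nearest multiple of $p$. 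Using the elementary majorisation $\min(H,p/\|m\|_p)\ll\sum_{0\le j\le\log_2 H}2^{j}\,\mathbf 1\{\|m\|_p\le p2^{-j}\}$ in each factor and multiplying out, I would obtain
\[
\sum_{z=1}^{p}\bigl|\widehat{\cA}(z)\bigr|\ \ll_r\ \sum_{(j_1,\dots,j_r)}2^{\,j_1+\dots+j_r}\,\Bigl|\bigl\{z\in\Z/p:\ \|\alpha_iz\|_p\le p2^{-j_i}\ (1\le i\le r)\bigr\}\Bigr|,
\]
where the outer sum runs over the $\ll_r(\log p)^r$ tuples with $0\le j_i\le\log_2 H_i$ (using $H_i\le|\cA|\le p$).

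The main step is the bound: if $1/(2H_i)\le\rho_i\le1$ for each $i$, then $\bigl|\{z\in\Z/p:\ \|\alpha_iz\|_p\le p\rho_i\ \forall i\}\bigr|\ll_r p\,\rho_1\cdots\rho_r$. To prove it I would pick auxiliary lengths $M_i=\max\bigl(1,\lfloor1/(2\rho_i)\rfloor\bigr)$, which satisfy $M_i\le H_i$ precisely because of the lower bound $\rho_i\ge1/(2H_i)$, and $p\rho_i\le p/(2M_i)$; since the Fej\'er kernel $F_M(t)=\frac1M\bigl|\sum_{0\le h<M}e_p(ht)\bigr|^2=\sum_{|h|<M}(1-|h|/M)e_p(ht)$ is nonnegative and satisfies $F_M(t)\ge cM$ on $\{\|t\|_p\le p/(2M)\}$, one checks $\mathbf 1\{\|\alpha_iz\|_p\le p\rho_i\}\ll\rho_iF_{M_i}(\alpha_iz)$. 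Multiplying over $i$, summing over $z\in\Z/p$, and using orthogonality turns the right-hand side into
\[
p\,\rho_1\cdots\rho_r\cdot\#\bigl\{(h_1,\dots,h_r):\ |h_i|<M_i,\ \ \alpha_1h_1+\dots+\alpha_rh_r\equiv0\!\!\!\pmod p\bigr\}
\]
up to an $O_r(1)$ factor; since $|h_i|<M_i\le H_i$, properness of $\cA$ forces all $h_i=0$, so this count equals $1$. Feeding $\ll_r p\prod_i2^{-j_i}$ back into the dyadic sum, each of the $\ll_r(\log p)^r$ tuples contributes $\ll_r p$, and the lemma follows.

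I expect the Bohr--set bound to be the only genuine obstacle, and it is where properness is indispensable: for an arbitrary choice of $\alpha_i$ the set $\{z:\|\alpha_iz\|_p\le p\rho_i\ \forall i\}$ can be far larger than $p\rho_1\cdots\rho_r$, so a naive dyadic count fails, and it is exactly the Fej\'er reduction to counting solutions of $\alpha_1h_1+\dots+\alpha_rh_r\equiv0$ inside the box $|h_i|<H_i$ that lets properness rescue the estimate. The one point to get right is the bookkeeping: the dyadic cutoffs $j_i\le\log_2 H_i$ force $\rho_i\ge1/(2H_i)$, which is exactly what makes the auxiliary lengths obey $M_i\le H_i$.
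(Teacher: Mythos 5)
Your argument is correct, and it is worth noting that the paper itself does not prove this lemma at all: it simply quotes it from Shao \cite{Shao1}, so there is no in-paper proof to compare against. What you give is a complete, self-contained derivation along the standard lines (and, as far as I can tell, essentially the route Shao takes): properness lets you factor $\widehat{\cA}(z)$ into one-dimensional geometric sums, the dyadic majorisation of $\min\bigl(H_i,p/\|\alpha_i z\|_p\bigr)$ reduces everything to $\ll_r(\log p)^r$ Bohr-set counts, and the Fej\'er-kernel step converts each Bohr-set count into the number of solutions of $\alpha_1h_1+\dots+\alpha_rh_r\equiv 0 \pmod p$ with $|h_i|<M_i\le H_i$, which properness forces to be the single trivial solution; each dyadic tuple then contributes $O_r(p)$, giving $\ll_r p(\log p)^r$. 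All the key checks go through: properness does force $\alpha_i\not\equiv 0$ whenever $H_i\ge 2$, $H_i\le|\cA|\le p$ justifies the $(\log p)^r$ count of tuples, and the constraint $j_i\le\log_2 H_i$ indeed guarantees $\rho_i\ge 1/(2H_i)$ and hence $M_i\le H_i$. One tiny slip: your intermediate claim $p\rho_i\le p/(2M_i)$ fails in the corner case $\rho_i>1/2$ (where $M_i=1$), but there $F_1\equiv 1$ and the desired pointwise bound $\mathbf 1\{\|\alpha_i z\|_p\le p\rho_i\}\le 2\rho_i F_{M_i}(\alpha_i z)$ holds trivially, so nothing is lost.
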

The following is due to Bourgain~\cite[Theorem~A]{Bour2}.
\begin{lemma}
\label{lem:bourml}
Let $0<\delta<1/4$ and $r\ge 2$. There exists some $\delta'$ such that if $p$ is a sufficiently large prime and $A_1,\dots,A_r\subseteq \F_p$ satisfy
\begin{align*}
& |A_i|>p^{\delta} \\
&\prod_{i=1}^{r}|A_i|>p^{1+\delta},
\end{align*}
then
\begin{align*}
\left|\sum_{x_1\in A_1,\dots x_r\in A_r}e_p(x_1\dots x_r) \right|\ll |A_1|\dots |A_r|p^{-\delta'}.
\end{align*}
\end{lemma}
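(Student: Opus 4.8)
The statement is Bourgain's multilinear estimate \cite[Theorem~A]{Bour2}, and the plan is to prove it by an iteration driven by the sum-product phenomenon in $\F_p$, with the classical bilinear bound as the seed. For the base case $r=2$ the hypotheses $|A_1|,|A_2|>p^{\delta}$ and $|A_1||A_2|>p^{1+\delta}$ make Lemma~\ref{lem:bilin} immediately applicable with $\alpha=\mathbf 1_{A_1}$ and $\beta=\mathbf 1_{A_2}$, giving
$$\left|\sum_{x_1\in A_1,x_2\in A_2}e_p(x_1x_2)\right|\le \sqrt{p|A_1||A_2|}=|A_1||A_2|\left(\frac{p}{|A_1||A_2|}\right)^{1/2}\le |A_1||A_2|\,p^{-\delta/2},$$
so $\delta'=\delta/2$ suffices.

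For the general case I would first isolate one variable, writing $S=\sum_\lambda \mu(\lambda)\,\widehat{\mathbf 1_{A_1}}(\lambda)$, where $\mu(\lambda)=\#\{(x_2,\dots,x_r):x_2\cdots x_r=\lambda\}$ is the multiplicative convolution of $\mathbf 1_{A_2},\dots,\mathbf 1_{A_r}$ and $\widehat{\mathbf 1_{A_1}}(\lambda)=\sum_{x_1\in A_1}e_p(x_1\lambda)$. Cauchy--Schwarz gives $|S|^2\le M\sum_\lambda \mu(\lambda)|\widehat{\mathbf 1_{A_1}}(\lambda)|^2$ with $M=\prod_{i\ge 2}|A_i|$. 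If the measure $\mu$ were spread out, say $\|\mu\|_\infty\ll M/p$, then Parseval ($\sum_\lambda|\widehat{\mathbf 1_{A_1}}(\lambda)|^2=p|A_1|$) would already yield $|S|\ll M|A_1|^{1/2}\le \prod_i|A_i|\,p^{-\delta/2}$. Hence the only way a power saving can fail is that $\mu$ concentrates on the large spectrum $\mathrm{Spec}_\rho(A_1)=\{\lambda:|\widehat{\mathbf 1_{A_1}}(\lambda)|\ge\rho|A_1|\}$.

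The genuine difficulty, and the reason a bare induction on $r$ cannot work, is that dropping $x_1$ breaks the product hypothesis: $\prod_{i=1}^r|A_i|>p^{1+\delta}$ does not give $\prod_{i=2}^r|A_i|>p^{1+\delta}$, so $(A_2,\dots,A_r)$ cannot simply be fed back into the inductive hypothesis. Instead I would exploit the structural tension just produced. By Chang's theorem the spectrum $\mathrm{Spec}_\rho(A_1)$ is additively structured (contained in a generalized arithmetic progression of rank $O_\rho(1)$), whereas $\mu$ is supported on the product set $A_2\cdots A_r$ and is therefore multiplicatively structured. The coexistence of additive and multiplicative structure on a set of size between $p^{\delta}$ and $p^{1-\delta}$ is exactly what the sum-product theorem forbids, and converting this qualitative obstruction into a quantitative, terminating gain is the technical heart of the argument.

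To finish I would run the Bourgain--Glibichuk--Konyagin iteration. From the concentration of $\mu$ on the spectrum one extracts, via Balog--Szemer\'edi--Gowers, a large piece $A'$ of the relevant product set with small additive doubling $|A'+A'|\ll K|A'|$; the quantitative sum-product inequality $\max(|A'+A'|,|A'\cdot A'|)\gg|A'|^{1+c}$, valid for $p^{\delta}\le|A'|\le p^{1-\delta}$, then contradicts the controlled multiplicative doubling of a product set unless some set is driven out of this range. The hypotheses $|A_i|>p^{\delta}$ keep every set above the small-scale degeneracy, and the optimal bound $\prod_i|A_i|>p^{1+\delta}$ is precisely what keeps the product sets below saturation, so each pass yields a fixed relative gain and the process halts after $O_{r,\delta}(1)$ steps with a net saving $p^{-\delta'}$ for some $\delta'=\delta'(\delta,r)>0$. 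The hard part will be tracking the Pl\"unnecke--Ruzsa and Balog--Szemer\'edi--Gowers losses uniformly across the iteration and verifying that it terminates, which is the main obstacle.
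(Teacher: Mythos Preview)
The paper does not prove this lemma at all: it is stated with the attribution ``The following is due to Bourgain~\cite[Theorem~A]{Bour2}'' and then simply invoked. So there is no proof in the paper to compare your proposal against; the authors treat it as a black box.

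As for your sketch itself, it is a reasonable high-level outline of the Bourgain--Glibichuk--Konyagin/Bourgain strategy, and your base case $r=2$ is correct. The dichotomy you set up (either the convolution measure $\mu$ is flat and Parseval finishes, or $\mu$ concentrates on the large spectrum of $A_1$) is the right starting point. However, a couple of points are worth flagging. First, invoking Chang's theorem to say the spectrum sits in a low-rank GAP is not the route Bourgain actually takes in \cite{Bour2}; his argument works instead through an $L^2$-flattening lemma for multiplicative convolutions of measures, combined directly with the quantitative sum-product theorem, and Chang's theorem does not obviously feed into that machinery in the way you describe. Second, and more seriously, you correctly identify that dropping a variable breaks the product hypothesis $\prod|A_i|>p^{1+\delta}$, but your proposed fix (``each pass yields a fixed relative gain and the process halts after $O_{r,\delta}(1)$ steps'') is exactly the part that requires real work: one must show that at every scale the sets involved stay in the range $[p^{\delta'},p^{1-\delta'}]$ where sum-product applies, and that the BSG/Pl\"unnecke losses do not swamp the gain. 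You acknowledge this as ``the main obstacle,'' and it genuinely is; without it the argument is a plan rather than a proof. If your goal is only to use the lemma, citing \cite{Bour2} as the paper does is the appropriate move.
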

\subsection{Proof of Theorem~\ref{thm:genap}}
Considering the sum
\begin{align*}
S=\sum_{a\in \cA}e_p(F(a)),
\end{align*}
we note that for any $a_2,\dots,a_d\in \cA$
\begin{align}
\label{eq:Sgap}
S&=\frac{1}{p}\sum_{z=1}^{p}\left(\sum_{a\in \cA}e_p(-za)\right) \\ & \times\sum_{a_1\in d\cA}e_p(F(a_1-a_2-\dots-a_d)+z(a_1-a_2-\dots-a_d)),
\end{align}
where $d\cA$ denotes the $d$-fold sumset 
\begin{align*}
d\cA=\{ a_1+\dots+a_d \ : a_i\in \cA \},
\end{align*}
so that
\begin{align}
\label{eq:dAbound}
|d\cA|\ll |\cA|.
\end{align}
Averaging~\eqref{eq:Sgap} over $a_2,\dots,a_d\in \cA$ and using Lemma~\ref{lem:genAPf} gives
\begin{align}
\label{eq:SthmGenAP}
S\ll \frac{1}{p}\sum_{z=1}^{p}|\widehat \cA(z)|\frac{|T(z)|}{|\cA|^{d-1}}\ll_r \frac{p^{o(1)}|T(z_0)|}{|\cA|^{d-1}},
\end{align}
for some $z_0\in \F_p$, where 
\begin{align*}
T(z)=\sum_{\substack{a_1\in d\cA \\ a_2,\dots,a_d\in \cA}}e_p(F(a_1-a_2-\dots-a_d)+z(a_1-a_2-\dots-a_d)).
\end{align*}
Since $F$ has degree $d$, we may write
\begin{align*}
F(z)=\sum_{i=0}^{d}b_iz^{i},
\end{align*}
and hence 
\begin{align*}
& F(a_1-a_2-\dots-a_d)=\sum_{i=0}^{d}b_i(a_1-a_2-\dots-a_d)^{i} \\ &  \quad \quad \quad =F_1(a_2,\dots,a_d)+\dots+F_d(a_1,\dots,a_{d-1})+(-1)^{d-1}b_da_1\dots a_d,
\end{align*}
for some sequence of polynomials $F_1,\dots,F_d$ where $F_i$ is independent of the variable $a_i$. This implies that 
\begin{align*}
&T(z_0)= \\ & \sum_{\substack{a_1\in d\cA \\ a_2,\dots,a_d\in \cA}}\omega_1(a_2,\dots,a_d)\dots \omega_d(a_1,\dots,a_{d-1})e_p((-1)^{d-1}b_da_1\dots a_d),
\end{align*}
for some sequence of weights $\omega_1,\dots,\omega_d$ with $\omega_i$ independent of $a_i$. By Lemma~\ref{lem:SXin}
\begin{align*}
|T(z_0)|^{2^{d-1}}& \ll_d |\cA|^{d2^{d-1}-1} \\ &+|\cA|^{d2^{d-1}-2d+1}\sum_{\substack{a_j,a_j'\in \cA \\ j\ge 2}}\left|\sum_{a_1\in d\cA}e_p(b_da_1(a_2-a_2')\dots (a_d-a_d')) \right|,
\end{align*}
and by the Cauchy-Schwarz inequality 
\begin{align*}
&|T(z_0)|^{2^{d}}\ll_d |\cA|^{d2^{d}-2} \\ & +|\cA|^{d2^{d}-2d}\left|\sum_{\substack{a_j,a_j'\in \cA \\ j\ge 2}}\sum_{a_1,a_1'\in d\cA}e_p(b_d(a_1-a_1')(a_2-a_2')\dots (a_d-a_d')) \right|.
\end{align*}
This implies that 
\begin{align*}
|T(z_0)|^{2d}\ll_d  |\cA|^{d2^{d}-2}+|\cA|^{d2^{d}-d}\left|\sum_{\substack{a_j\in \cA+c_j \\ j\ge 2}}\sum_{a_1\in d\cA+c_1}e_p(b_da_1\dots a_d) \right|,
\end{align*}
for some $c_1,\dots,c_d\in \F_p$. We note that the assumption~\eqref{eq:genAPcond} implies that the conditions of Lemma~\ref{lem:bourml} are satisfied and hence 
\begin{align*}
|T(z_0)|^{2d}\ll_d |\cA|^{d2^{d}}p^{-\delta'},
\end{align*}
for some $\delta'>0$ depending on $\varepsilon$ and the result follows from~\eqref{eq:SthmGenAP}.

\end{document}